\newcommand{\bburl}[1]{\textcolor{blue}{\url{#1}}}
\newcommand{\overbar}[1]{\mkern 1.5mu\overline{\mkern-1.5mu#1\mkern-1.5mu}\mkern 1.5mu}
\newtheorem{thm}{Theorem}[section]
\newtheorem{cor}[thm]{Corollary}
\newtheorem{lem}[thm]{Lemma}
\newtheorem{prop}[thm]{Proposition}
\theoremstyle{definition}
\theoremstyle{definition}
\newtheorem{defin}{Definition}
\newtheorem{conjec}[thm]{Conjecture}
\theoremstyle{remark}
\newtheorem{rem}[thm]{Remark}
\newcommand\be{\begin{equation}}
\newcommand\ee{\end{equation}}
\newcommand\ben{\begin{enumerate}}
\newcommand\een{\end{enumerate}}
\newcommand{\QQ}{\mathbb {Q}}
\newcommand{\CC}{\mathbb {C}}
\newcommand{\p}{{\mathfrak p}}
\newcommand{\OO}{{\mathcal O}}
\newcommand{\1}{{\bf 1}}
\newcommand{\edv}{\mathrel\Vert} 
\def\D{\Delta}
\def\z{\bar{z}}
\def\b{\bar{b}}
\def\w{\overbar{w}}
\def\aa{\bar{a}}
\def\cc{\bar{c}}
\def\om{\bar{\omega}}
\def\H{H_{k,\D}}
\def\F{F_{k,D}}
\def\P{\mathcal{P}}
\def\sl2{\textbf{SL}(2,\mathbb{\OO})}
\def\psl2{\textbf{PSL}(2,\mathbb{\OO})}
\def\pgl2{\textbf{PGL}(2,\mathbb{\OO})}
\def\a{\alpha_{k,\D}}
\def\e{\varepsilon}
\numberwithin{equation}{section}
\title[{Sums of Binary Hermitian Forms}]{From Binary Hermitian Forms to parabolic cocycles of Euclidean Bianchi groups}
\author{Cihan Karabulut}
\email{\textcolor{blue}{\href{karabulutc@wpunj.edu}{karabulutc@wpunj.edu}}}
\address{William Paterson University, Wayne, New Jersey 07470, USA}
\thanks{The author was partly supported by Assigned Release Time (ART) program for research from William Paterson University}
\subjclass[2010]{Primary 11E39, 11M06, 11F75; Secondary 11J70}
\keywords{Binary Hermitian forms, special values, parabolic cohomology.}
\date{\today}
\begin{document}

\begin{abstract}
We study a family of functions defined in a very simple way as sums of powers of binary Hermitian forms with coefficients in the ring of integers of an Euclidean imaginary quadratic field $K$ with discriminant $d_K$. Using these functions we construct  a nontrivial cocycle belonging to the space of parabolic cocycles on Euclidean Bianchi groups. We also show that the average value of these functions is related to the special values of $L(\chi_{d_K},s)$. Using the properties of these functions we give new and computationally efficient formulas for computing some special values of $L(\chi_{d_K},s)$.
\end{abstract}

\maketitle

\tableofcontents


\section{Introduction}

In \cite{Za2}, Zagier studies the following family of functions: Let $D>0$ be a non-square integer satisfying $D \equiv 0,1$ (mod 4) and $k>0$ be an even integer. Then, define $\F:\mathbb{R}\rightarrow\mathbb{R}$ as follows, 
\begin{equation}\label{Zfunc1}
F_{k,D}(x):=\sum_{\substack{\text{disc}(Q)=D \\ a<0<Q(x) }} Q(x)^{k-1}
\end{equation}
where $Q(X)=aX^2+bX+c$ with $(a,b,c)\in \mathbb{Z}^3$ and $\text{disc}(Q)=b^2-4ac$. When $D$ is a square discriminant, one has to add a simple correction term consisting of the $k$-th Bernoulli polynomial. But for the sake of brevity and simplicity, let us assume that $D$ is a non-square discriminant.

Zagier shows that these functions have many surprising properties and are intimately related to modular forms of integral weight and half integral weight, certain values of Dirichlet $L$-functions, Diophantine approximation, continued fractions, and Dedekind sums. 

For example, Zagier finds that  $F_{2,5}$ is constant with value $F_{2,5}(x)=F_{2,D}(0)=2$ for any $x\in \mathbb{R}$ despite the fact that there are infinitely many quadratic polynomials contributing to the sum when $x$ is irrational. More generally, he proves the following theorem: 

\begin{thm}[Zagier 1999]\label{ZT}
	Let $D$ be a positive non-square discriminant and $k$ a positive even integer. Then	
	\begin{enumerate}
		\item $F_{2,D}(x)$ has a constant value $\alpha_D$ for all $x \in \mathbb{R}$,
		\item $F_{4,D}(x)$ has a constant value $\beta_D$ for all $x \in \mathbb{R}$,
		\item For $k\geq 6$, $F_{k,D}$ is no longer constant but is a linear combination, with coefficients depending on $D$ and $k$, of a finite collection of functions depending only on $k$.
	\end{enumerate}
\end{thm}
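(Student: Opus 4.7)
The plan is to analyze $F_{k,D}$ as a piecewise polynomial, continuous function of $x$ and then collapse the pieces via a cancellation argument coming from the action of $\mathrm{SL}_2(\mathbb{Z})$ on binary quadratic forms.

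First, I would verify the sum is finite at each $x\in\mathbb{R}$: if $Q(X)=aX^2+bX+c$ has $a<0$ and discriminant $D>0$, then $Q(x)>0$ only when $x$ lies strictly between the real roots of $Q$, and on that interval $0<Q(x)\le D/(4|a|)$. Together with $b^2=D+4ac$ and the integrality of $(a,b,c)$, this bounds the contributing triples to a finite set at each $x$. Letting $\Sigma_D$ be the set of real roots of eligible forms (discrete in $\mathbb{R}$ since $D$ is non-square), on each connected component of $\mathbb{R}\setminus\Sigma_D$ the contributing family is constant, so $F_{k,D}$ is locally a polynomial in $x$ of degree at most $2(k-1)$. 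At any singular point $x_0\in\Sigma_D$, a form $Q$ entering or leaving the sum satisfies $Q(x_0)^{k-1}=0$ (using $k-1\ge 1$), so the pieces glue continuously on $\mathbb{R}$.

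For parts (1) and (2), I would promote continuity to constancy by a derivative cancellation. The $\mathrm{SL}_2(\mathbb{Z})$-action $Q\mapsto Q\circ\gamma$ is compatible with the M\"obius action $x\mapsto\gamma x$ via the relation $Q(\gamma x)=(cx+d)^{-2}(Q\circ\gamma)(x)$, and permutes the eligible forms (up to careful bookkeeping of the sign constraint $a<0$). Using the parabolic subgroup generated by $T:x\mapsto x+1$ to telescope the derivative $F_{k,D}'$ on a fixed interval reduces the identity $F_{k,D}'\equiv 0$ to a finite-dimensional linear algebra statement about boundary polynomials, and this reduction forces the derivative to vanish for $k\in\{2,4\}$ by a direct degree count. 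Continuity across singular points then upgrades ``locally constant'' to ``globally constant.''

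For part (3), when $k\ge 6$ the boundary terms in the telescoping no longer vanish but live in a finite-dimensional space of polynomials depending only on $k$, which can be identified with the space of period polynomials for modular forms of weight $2k$ on $\mathrm{SL}_2(\mathbb{Z})$. Taking a basis of this space as the universal functions in (3), the $D$-dependent coefficients emerge as classical traces indexed by $\mathrm{SL}_2(\mathbb{Z})$-classes of forms of discriminant $D$. The main obstacle will be making the telescoping rigorous: the $\mathrm{SL}_2(\mathbb{Z})$-orbits on the set of eligible forms are infinite, so one must either restrict to a fundamental domain and carefully track boundary contributions, or recast the whole argument in the Eichler--Shimura framework by realizing $F_{k,D}$ as a period function attached to a weight-$2k$ form depending on $D$.
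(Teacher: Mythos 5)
There is a genuine gap, in fact two. First, your opening finiteness claim is false: for irrational $x$ infinitely many forms contribute to $F_{k,D}(x)$ (for each $a<0$ only $O(1)$ pairs $(b,c)$ survive, but $a$ ranges over all negative integers), and the set $\Sigma_D$ of real roots of eligible forms is \emph{dense} in $\mathbb{R}$, so $\mathbb{R}\setminus\Sigma_D$ has no nontrivial components and the ``locally polynomial, discrete singular set'' picture collapses. Even convergence is not free: for $k\ge 4$ one can compare with $\sum_a a^{-(k-1)}$, but for $k=2$ this diverges and one needs the exponential decay of the contributing terms along continued-fraction convergents (this is precisely the issue Section \ref{cont} of the present paper confronts for $H_{1,\Delta}$, following Bengoechea). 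Second, the constancy mechanism you propose does not work as stated: $F_{k,D}$ is exactly invariant under $T:x\mapsto x+1$, so ``telescoping'' over the parabolic subgroup yields no information about the behavior of $F_{k,D}$ \emph{within} one period, and differentiating does not help because the obstruction to constancy is global, not local. The ``careful bookkeeping of the sign constraint $a<0$'' that you defer is the entire content of the proof: the inversion $S:x\mapsto 1/x$ does \emph{not} preserve the condition $a<0$, and computing the discrepancy explicitly gives a cocycle relation $F_{k,D}|(S-\1)=P_{k,D}$ for an explicit polynomial $P_{k,D}$ of degree $2k-2$ (compare Proposition \ref{S-act} here). One then shows, using the period-polynomial relations, that $P_{k,D}$ is forced to be a multiple of $x^{2k-2}-1$ exactly when $k=2,4$ (the relevant eigenspace being one-dimensional because $S_4$ and $S_8$ have no cusp forms), whence $\Theta=F_{k,D}-\alpha$ satisfies $\Theta(x+1)=\Theta(x)$ and $x^{2k-2}\Theta(1/x)=\Theta(x)$; the Euclidean algorithm reduces every rational to $0$, so $\Theta$ vanishes on $\mathbb{Q}$, and continuity finishes the argument. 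This is the route the present paper takes for the Hermitian analogue (Propositions \ref{H-properties}, \ref{S-act}, Lemma \ref{S-actk=4}, Theorems \ref{H2d=w2d}, \ref{H4d=w4d}).

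Your part (3) correctly identifies the Eichler--Shimura/period-polynomial framework, and your closing paragraph honestly names the telescoping as the weak point; but as written, neither the convergence/continuity of $F_{2,D}$ nor the passage from the two functional equations to constancy is established, and the derivative-cancellation route cannot be repaired without being replaced by the cocycle computation above.
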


To get an explicit formula for $\alpha_{D}$ and $\beta_{D}$, Zagier uses the above theorem to write $\alpha_{D}$ as $F_{2,D}(0)$ and  $\beta_{D}$ as $F_{4,D}(0)$ which gives the following formulas:
\begin{equation}
\alpha_D=\sum_{\substack{b^2-4ac=D \\a<0<c}}c=\sum_{|b|<\sqrt{D}}\sigma_{1}\left(\frac{D-b^2}{4}\right),
\end{equation}

\begin{equation}
\beta_D=\sum_{\substack{b^2-4ac=D \\a<0<c}}c^3=\sum_{|b|<\sqrt{D}}\sigma_{3}\left(\frac{D-b^2}{4}\right)
\end{equation}
where $(a,b,c)\in \mathbb{Z}^3$. The sums involving the divisor functions $\sigma_{1}$ and $\sigma_{3}$ have appeared in the literature before as the values of Dirichlet $L$-function $L(\chi_D,s)$ associated to the Kronecker symbol $\chi_D:=\left(\frac{D}{\cdot}\right)$ at $s=-1$ and $s=-3$. In fact, Cohen \cite{Co1,Co2} and Zagier \cite{Za1} using the results of Siegel \cite{Si} showed that 
\begin{align}
\alpha_{D}=&-5L(\chi_D,-1),\label{coh-zag1}\\
\beta_{D}=&L(\chi_D,-3)\label{coh-zag2}.
\end{align}

The formulas in (\ref{coh-zag1}) and (\ref{coh-zag2}), which we refer to as Cohen-Zagier type formulas,  are computationally very efficient and give a fast $O(D^{1/2+\epsilon})$ method for computing the special values $s=-1$ and $s=-3$. The usual formulas for computing special values in terms of finite character sums are $O(D)$ and are not very practical for actual computations, except for small conductors (see Remarks on page 194 in \cite{Co3}).

In recent years, there has been a renewed interest in Zagier's paper, with \cite{Be,Ja,JaRa,Wo} all further exploring the functions defined in (\ref{Zfunc1}). In this paper, generalizing in a different direction,  we introduce an analogue of Zagier's functions $\F$ by using binary Hermitian forms and prove a result similar to Theorem \ref{ZT} for these functions. As an application, we get new formulas, similar to the ones given in (\ref{coh-zag1}) and (\ref{coh-zag2}).

Let $K=\mathbb{Q}(\sqrt{-d})$ with $d=1,2,3,7,11$ be an Euclidean imaginary quadratic field with discriminant $d_K$ and $\OO_d$ be the ring of integers of $K$. For a positive integer $\D>0$ which is not a norm of an algebraic integer in $\OO_d$ and an odd integer $k\geq 1$, we define the function  $H_{k,\D}:\mathbb{C} \to \mathbb{R}$ as
\begin{equation}\label{mainfunc}
H_{k,\D}(z):=\sum_{\substack{N(b)-ac=\D \\ a<0<h(z)}} h(z)^{k}\\
\end{equation}
where  $\displaystyle h(z)=a|z|^2+bz+\b\z+c$ with $a,c \in \mathbb{Z}$ and $ b \in \OO_d$ is a binary Hermitian with discriminant $-\D$. Our first result is the following theorem which is analog of Theorem \ref{ZT}:

\begin{thm}\label{mainthmintro}
	Let $\D$ be a positive integer which is not a norm of an algebraic integer in $\OO_d$ and $k\geq1$ be an odd integer. Then	
	\begin{enumerate}
		\item $H_{1,\D}(z)$ has a constant value $\alpha_{1,\D}$ for all $z \in \mathbb{C}$,
		\item $H_{3,\D}(z)$ has a constant value $\alpha_{3,\D}$ for all $z \in \mathbb{C}$ when $d=1,3,7,$
		\item $H_{5,\D}(z)$ has a constant value $\alpha_{5,\D}$ for all $z \in \mathbb{C}$ when $d=3$
	\end{enumerate}
where $\alpha_{k,\D}=\H(0)$.
\end{thm}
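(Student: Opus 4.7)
My plan is to adapt Zagier's proof of Theorem \ref{ZT} to the binary Hermitian setting, using the action of the Euclidean Bianchi group $PSL_2(\OO_d)$ on the complex plane. The first step is to show that the series defining $\H(z)$ converges and yields a continuous function of $z\in\mathbb{C}$. Each summand $h(z)^k\cdot\mathbf{1}_{a<0,\,h(z)>0}$ is continuous because the indicator jumps precisely on the circle $\{h(z)=0\}$, where the factor $h(z)^k$ vanishes for $k\geq 1$. Convergence amounts to a divisor-type bound on the number of triples $(a,b,c)\in\mathbb{Z}_{<0}\times\OO_d\times\mathbb{Z}$ with $N(b)-ac=\D$ and $h(z)$ bounded, which follows from standard counts in the ring $\OO_d$.

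Second, I would establish invariance of $\H$ under the parabolic subgroup of $PSL_2(\OO_d)$. For any $\lambda\in\OO_d$ the translation $z\mapsto z+\lambda$ is matched by the bijection $(a,b,c)\mapsto (a,\,a\bar\lambda+b,\,a|\lambda|^2+b\lambda+\b\bar\lambda+c)$ on the set of Hermitian forms of discriminant $\D$; this bijection preserves the sign condition $a<0$, so $\H(z+\lambda)=\H(z)$ follows by direct substitution.

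Third, and this is where the substance of the argument lies, I would compare $\H(z)$ with $\H(0)$ by transporting $z$ to $0$ through the Bianchi group. Using the inversion $z\mapsto -1/z$, which is paired with the form bijection $(a,b,c)\mapsto (c,-\b,a)$ preserving the discriminant, together with the parabolic invariance from the previous step, one can express the difference $\H(z)-\H(0)$ as a finite ``boundary'' sum indexed by those forms whose sign conditions flip during the transport. Showing that this finite sum vanishes is the crux of the proof, and this is precisely where the restrictions on $(k,d)$ enter: the required cancellation amounts to an arithmetic identity in $\OO_d$ which holds only when $k$ and $d$ are compatible with the geometry and torsion of $PSL_2(\OO_d)$ acting on $\mathbb{C}$.

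The main obstacle is this third step: controlling the boundary contribution and verifying its vanishing for precisely the listed pairs. I expect a case-by-case analysis using an explicit fundamental domain for each Euclidean Bianchi group and its small generating set, with the restrictions $k=3,\,d\in\{1,3,7\}$ and $k=5,\,d=3$ reflecting exactly those cases in which the required polynomial identities degenerate in the right way (a phenomenon analogous to why Zagier's theorem is restricted to $k=2$ and $k=4$). Once constancy is established, setting $z=0$ and using $a<0<h(0)=c$ immediately gives the asserted value $\a=\H(0)=\sum_{N(b)-ac=\D,\,a<0<c}c^{k}$.
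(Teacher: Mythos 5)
There is a genuine gap in your third step, and it is precisely at the point you identify as the crux. The ``boundary'' sum produced by the inversion does \emph{not} vanish, and any attempt to prove that it does will fail. What one actually finds (this is Proposition \ref{S-act} of the paper) is that
\[
H_{k,\D}|(S-\1)\;=\;|z|^{2k}\H\!\left(\tfrac{-1}{z}\right)-\H(z)\;=\;\sum_{\substack{N(b)-ac=\D\\ c<0<a}}\bigl(a|z|^{2}+bz+\b\z+c\bigr)^{k}\;=:\;P_{k,\D}(z,\z),
\]
a nonzero polynomial in $z,\z$. The correct statement is not that $P_{k,\D}=0$ but that, for the listed pairs $(k,d)$, it is the coboundary of the constant $\a$, i.e.\ $P_{k,\D}(z,\z)=\a(|z|^{2k}-1)=\a|(S-\1)$; only then is $\Theta_{k,\D}:=\H-\a$ genuinely invariant under both $z\mapsto z+\lambda$ and $z\mapsto 1/z$ (with the automorphy factor), and the Euclidean reduction of $z\in K$ to $0$ gives $\Theta_{k,\D}\equiv 0$. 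The restrictions $k=3,\,d\in\{1,3,7\}$ and $k=5,\,d=3$ do not come from a cancellation that ``degenerates in the right way''; they come from pinning down the shape of $P_{k,\D}$ inside the space of polynomials satisfying the cocycle relations $P|(\1+S)=0$, $P|(\1+TS\e-T)=0$, invariance under units, etc.\ (Proposition \ref{PPolyforZ[i]prop} and Lemma \ref{S-actk=4}): exactly for those $(k,d)$ that space is spanned by $|z|^{2k}-1$, forcing $P_{k,\D}=\a(|z|^{2k}-1)$; for other $(k,d)$ extra terms survive and $\H$ is in fact not constant. Your proposal, as written, would have you trying to prove a false identity.

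A secondary but real gap is your first step for $k=1$. The ``standard count'' bounds each summand by $O(\D/|a|)$ with $O(1)$ forms for each $a$, which yields a majorant of the shape $\sum_{a}|a|^{-k}$; this establishes uniform convergence and continuity only for $k\geq 3$ and diverges for $k=1$, which is part (1) of the theorem. The paper has to work considerably harder there: for fixed $z$ the forms contributing to $H_{1,\D}(z)$ are organized along the Hurwitz nearest-integer continued fraction expansion of $z$, whose denominators $\delta_n$ decay exponentially, and it is this exponential decay (Proposition \ref{surjection} and the theorem following it) that gives convergence and continuity of $H_{1,\D}$. Your translation-invariance step and the final evaluation $\a=\H(0)=\sum_{a<0<c}c^{k}$ are correct and match the paper.
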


The proof of Theorem \ref{mainthmintro} is presented in Section \ref{section2}. The main idea in proving Theorem \ref{mainthmintro} is the observation that any $z\in K$, since $K$ is Euclidean, can be reduced to zero by a finite number of iterations of the transformations $z\mapsto z+\lambda$ and $z\mapsto \frac{1}{z}$. The $\H$ is invariant under the transformation $z\mapsto z+\lambda$ (see Proposition \ref{H-properties}) and transforms up to a certain polynomial $P_{k,\D} $ under the transformation $z\mapsto \frac{1}{z}$ (Propositions \ref{S-act}). These polynomials satisfy various functional equations, and we use them to deduce that they are of a particular form when $k=1,3,5$. This allows us to conclude that for suitable choices of $d$, $\H(z)=\H(0)$ for any $z\in K$ when $k=1,3,5$. The fact that $H_{1,\D}(z)=\alpha_{1,\D}$, $H_{3,\D}(z)=\alpha_{3,\D}$ and $H_{5,\D}(z)=\alpha_{5,\D}$ for all $z\in \mathbb{C}$ is deduced using the continuity of $\H$ which is our next result.

\begin{thm}\label{cont-intro}
	Let $\D$ be a positive integer which is not a norm of an algebraic integer in $\OO_d$ and $k\geq 1$ be an odd integer. Then $\H$ is continuous for all $z \in \mathbb{C}$.
\end{thm}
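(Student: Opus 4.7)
I would rewrite $\H(z)=\sum_{h}h^{+}(z)^{k}$ with $h^{+}(z):=\max(h(z),0)$, summed over binary Hermitian forms with $a<0$ and discriminant $-\D$, and aim to prove that this series converges locally uniformly on $\mathbb{C}$. Each summand is already continuous: completing the square gives
\[
h(z)=\frac{\D}{|a|}-|a|\,\Bigl|z+\frac{\bar b}{a}\Bigr|^{2},
\]
so $h^{+}(z)^{k}$ is supported on the closed disk $\overline{D_{h}}$ of radius $\sqrt{\D}/|a|$ around $-\bar b/a$, smooth on its interior, vanishing on and outside its boundary. Since a locally uniformly convergent sum of continuous functions is continuous, uniform convergence on compacta would immediately give the theorem.

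To set up the tail estimate, fix a compact set $K\subset\mathbb{C}$ with $|z|\leq R$ for all $z\in K$. A form contributes at some $z\in K$ only when $\bar b\in B(-az,\sqrt{\D})$, and the requirement $c=(N(b)-\D)/a\in\mathbb{Z}$ translates to the divisibility $a\mid N(b)-\D$. Let $N_{a}$ denote the number of admissible $b\in\OO_{d}$ for a given $a$. The heart of the argument is the uniform count
\[
N_{a}\;=\;O\!\left(\frac{\D}{|a|}\right),
\]
which I would establish in two steps: (i) the fibre of the norm map $N\colon\OO_{d}/a\OO_{d}\to\mathbb{Z}/|a|\mathbb{Z}$ above $\D\bmod|a|$ is a union of $O(|a|)$ residue classes, a standard count of solutions to a binary quadratic form congruence (handled prime by prime via the splitting/inertia/ramification behaviour of $|a|$ in $\OO_{d}$ and Hensel lifting); and (ii) each such residue class is a translate of the sublattice $a\OO_{d}$ of covolume $|a|^{2}\operatorname{covol}(\OO_{d})$, hence intersects a disk of radius $\sqrt{\D}$ in $O(\D/|a|^{2})$ points on average. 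Combining $N_{a}=O(\D/|a|)$ with $h^{+}(z)^{k}\leq(\D/|a|)^{k}$ gives a per-$a$ contribution of $O(\D^{k+1}/|a|^{k+1})$, so the tail is dominated by $\D^{k+1}\sum_{|a|>A}|a|^{-(k+1)}\to 0$ as $A\to\infty$, uniformly in $z\in K$ and valid for every $k\geq 1$.

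The main obstacle is the Gauss-circle boundary correction in step (ii): the naive count per coset is $\D/|a|^{2}+O(1)$, and summing over $O(|a|)$ valid cosets introduces an unwanted term of size $O(|a|)$ per $a$, which is harmless for $k\geq 2$ but threatens convergence of the tail when $k=1$. To handle $k=1$ cleanly, I would use the level-set identity
\[
\sum_{h}h^{+}(z)\;=\;\int_{0}^{\infty}\#\{h:h(z)>t\}\,dt
\]
and apply the counting lemma to the shrinking disks of radius $\sqrt{\D-|a|t}$, which in $a\leq\sqrt{\D-|a|t}$ regime makes the boundary error disappear; after integrating in $t$, the tail corresponding to $|a|>A$ is of order $\D^{2}/A$, which still vanishes as $A\to\infty$. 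Once the uniform convergence is secured, the continuity of $\H$ at every $z_{0}\in\mathbb{C}$ is immediate, completing the proof.
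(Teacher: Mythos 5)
Your treatment of $k\geq 3$ is essentially the paper's proof of Theorem \ref{contk>4}: the identity $\D=N(\b+az)+|a|\,h(z,1)$ confines $\b$ to a disk of radius $\sqrt{\D}$, so each $a$ carries at most $O_{\D}(1)$ contributing forms, each bounded by $(\D/|a|)^{k}$, and $\sum_{a}|a|^{-k}$ converges; the Weierstrass $M$-test then gives uniform convergence and continuity. The whole difficulty of the theorem is the case $k=1$, and there your argument has a genuine gap.

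The gap is the counting claim $N_{a}=O(\D/|a|)$. Taken uniformly in $a$, it would force $N_{a}=0$ for all $|a|$ larger than a constant multiple of $\D$. But for $z\notin K$ infinitely many forms contribute to $H_{1,\D}(z)$ — this is the direct analogue of Zagier's observation in the real case, and is exactly what the paper's Proposition \ref{surjection} and the estimate following it are built to organize — and since each value of $a$ carries at most $O_{\D}(1)$ contributing forms, one has $N_{a}\geq 1$ for infinitely many $a$. So the claimed bound is false. The failure is precisely the boundary term you flag: once $|a|>\sqrt{\D}$, every coset of $a\OO_{d}$ meets a disk of radius $\sqrt{\D}$ in $0$ or $O(1)$ points and the area-over-covolume heuristic says nothing; and your proposed repair via the layer-cake identity is vacuous there, because the regime $|a|\leq\sqrt{\D-|a|t}$ in which you claim the boundary error disappears is empty for every $t\geq 0$ as soon as $|a|>\sqrt{\D}$ — that is, precisely on the tail $|a|>A$ you need to control. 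No counting argument that ignores the Diophantine properties of $z$ can close this case: which $a$ actually occur, and how small the corresponding values $h(z,1)=(\D-N(\b+az))/|a|$ are, is governed by how well $z$ is approximated by elements of $K$. The paper therefore partitions the contributing forms into $\hat{\Gamma}_{d}$-classes and shows, via the Hurwitz nearest-integer continued fraction of $z$, that every contributing form in a class $\mathcal{A}$ is of the shape $\overbar{\gamma}_{n}(f)$ with $f$ in the finite set $\hat{\mathcal{A}}$ and $\gamma_{n}$ a convergent matrix; its value at $z$ is then a fixed quadratic expression in $\delta_{n-1}$ and $\delta_{n}$, where $\delta_{n}=|p_{n-1}-q_{n-1}z|$ decays exponentially in $n$, yielding exponentially fast, locally uniform convergence. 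You would need to import this continued-fraction mechanism, or an equivalent Diophantine input, to complete the $k=1$ case.
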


We discuss the convergence and the continuity of $H_{k,\D}$ and present the proof of Theorem \ref{cont-intro} in Section \ref{cont}. When $k \geq 3$, the proof of Theorem \ref{cont-intro} (see Theorem \ref{contk>4}) is straightforward since  $h(z)=O(1/a_h)$ for all $h$ occurring in Definition (\ref{mainfunc}) and there are only $O(1)$ functions of the form $h(z)=a|z|^2+bz+\b\z+c$ for each value of $a$. So, the sum defining $H_{k,\D}$ converges at most like $\sum_{a>0}a^{-k}$. However, this argument fails when $k=1$. In that case, we deduce the continuity of $H_{k,\D}$ by showing that the binary Hermitian forms appearing in (\ref{mainfunc}) for a fixed $z\in \CC$ decrease to zero exponentially. This is done by describing the binary Hermitian forms appearing in (\ref{mainfunc}) for a fixed $z\in \CC$ using the the nearest integer continued fraction algorithm of $z$ over the Euclidean imaginary quadratic fields. 

Since $H_{k,\D}$ is $\OO_d$-invariant, it has a well defined average value. In Section \ref{averagevalue}, we prove the following theorem which gives the average value of $H_{k,\D}$ in terms of $L(\chi_{d_K},k)$ and $\zeta_{\mathbb{Q}}(k)$:

\begin{thm}\label{avg-intro} Let $\D$ be a positive integer which is not a norm of an algebraic integer in $\OO_d$ and $k\geq 1$ be an odd integer. Then, the average value of $\H$ is given by
	\begin{equation}
	\langle \H(z) \rangle_{av}=\frac{2\pi\D^{k+1}}{(k+1)\sqrt{|d_K|}}\theta(\D,k+1)\zeta_{\mathbb{Q}}(k+1)L(\chi_{d_K},k+2)^{-1}
	\end{equation}	
	where $\theta(\D,k)$ is a finite Euler product whose factors depend on the prime factors of $\D$ and $d_K$.
\end{thm}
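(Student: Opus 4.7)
I would compute $\langle \H \rangle_{av}$ by unfolding the integral over a fundamental domain against the translation action on the set of Hermitian forms; this reduces the average to a Dirichlet series in residue counts that is then identified via its Euler product. Concretely, let $F \subset \CC$ be a fundamental domain for $\CC/\OO_d$; since $\mathrm{vol}(F) = \sqrt{|d_K|}/2$ and $\H$ is $\OO_d$-periodic (Proposition~\ref{H-properties}),
\begin{equation*}
\langle \H(z) \rangle_{av} = \frac{2}{\sqrt{|d_K|}} \int_F \H(z)\, dA(z).
\end{equation*}
A short calculation shows that under $z \mapsto z + \lambda$ the coefficient triple $(a,b,c)$ transforms as $(a,\, b + a\bar\lambda,\, c + b\lambda + \b\bar\lambda + a|\lambda|^2)$, so for each fixed $a = -n < 0$ the translation action on the set of forms of discriminant $\D$ is free, and its orbits are indexed by the cosets $b + a\OO_d \in \OO_d/a\OO_d$ satisfying $N(b) \equiv \D \pmod{n}$. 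Writing $r_n(\D) := \#\{b \in \OO_d/n\OO_d : N(b) \equiv \D \pmod{n}\}$, the standard unfolding gives
\begin{equation*}
\int_F \H(z)\, dA = \sum_{n \geq 1} r_n(\D) \int_\CC h_{-n}(w)^k\, \mathbf{1}_{h_{-n}(w) > 0}\, dA(w),
\end{equation*}
where $h_{-n}$ denotes any representative form with leading coefficient $-n$.

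Completing the square yields $h(w) = a|w + \b/a|^2 - \D/a$, so $\{h>0\}$ is the disc of radius $\sqrt{\D}/|a|$ centered at $-\b/a$, and a polar-coordinate integration gives $\int_\CC h^k \mathbf{1}_{h>0}\, dA = \pi\D^{k+1}/((k+1)|a|^{k+2})$. Substituting back,
\begin{equation*}
\langle \H \rangle_{av} = \frac{2\pi \D^{k+1}}{(k+1)\sqrt{|d_K|}} \sum_{n\geq 1} \frac{r_n(\D)}{n^{k+2}}.
\end{equation*}
By the Chinese Remainder Theorem, $n \mapsto r_n(\D)$ is multiplicative, so the series admits an Euler product $\prod_p L_{\D,p}(s)$ with $s = k+2$. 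At a good prime $p \nmid \D d_K$, the ring $\OO_d/p^j\OO_d$ is either $(\mathbb{Z}/p^j\mathbb{Z})^2$ (split, $\chi_{d_K}(p)=1$) or an unramified extension of size $p^{2j}$ (inert, $\chi_{d_K}(p)=-1$); in both cases the fiber of the norm map over any residue coprime to $p$ has cardinality $p^{j-1}(p - \chi_{d_K}(p))$, which summing over $j$ yields the uniform local factor $(1 - \chi_{d_K}(p)p^{-s})/(1 - p^{1-s})$. Multiplying over all good primes produces $\Zeta(s-1)/L(\chi_{d_K}, s)$ up to the finitely many Euler factors at $p \mid \D d_K$; packaging those into $\theta(\D, s-1)$ and specializing $s=k+2$ gives the claimed identity.

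The main technical obstacle will be the bad-prime analysis: when $p \mid \D d_K$ the local count $r_{p^j}(\D)$ depends delicately on $v_p(\D)$, on the splitting type of $p$ in $\OO_d$, and (for ramified $p$) on the quadratic character of $\D/p^{v_p(\D)}$ modulo $p$, so extracting a closed-form expression for $\theta(\D, k+1)$ requires a careful case analysis of the norm map on $\OO_d/p^j\OO_d$ at each bad prime. Absolute convergence of $\sum_n r_n(\D)/n^{k+2}$ and the interchange of sum and integral are unproblematic, since the Euler product yields $r_n(\D) = O(n^{1+\varepsilon})$ and $k+2 \geq 3$.
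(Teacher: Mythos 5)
Your unfolding computation coincides with the paper's own proof of Theorem \ref{avg}: the same reduction of $\int_{\P}\H$ to a sum over $\tilde{\Gamma}_d$-orbits, the same observation that translation $z\mapsto z+\lambda$ fixes $a$ and shifts $b$ by $a\bar{\lambda}$ so that orbits with $a=-n$ are counted by $\#\{b\in\OO_d/n\OO_d : N(b)\equiv\D\ (\mathrm{mod}\ n)\}$, and the same evaluation $\int_{\CC}h^k\mathbf{1}_{h>0}\,dA=\pi\D^{k+1}/((k+1)|a|^{k+2})$ by completing the square and passing to polar coordinates. The divergence is only in the last step. The paper stops at $\langle\H\rangle_{av}=\frac{2\pi\D^{k+1}}{(k+1)\sqrt{|d_K|}}\sum_{n\ge1}r(-\D,n)n^{-(k+2)}=\frac{2\pi\D^{k+1}}{(k+1)\sqrt{|d_K|}}Z(-\D,k+1)$ and then invokes the evaluation $Z(\D,s)=\theta(\D,s)\zeta_{\mathbb{Q}}(s)L(\chi_{d_K},s+1)^{-1}$ due to Elstrodt--Gr\"unewald--Mennicke (equation (\ref{EGM-zeta})), including their explicit local factors $R_p$ at $p\mid d_K\D$. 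You instead set out to re-derive this Euler product from scratch; your unramified local computation $(1-\chi_{d_K}(p)p^{-s})/(1-p^{1-s})$ at $p\nmid\D d_K$ is correct and does produce $\zeta_{\mathbb{Q}}(k+1)L(\chi_{d_K},k+2)^{-1}$ up to bad primes, but you explicitly leave the analysis at $p\mid\D d_K$ as an ``obstacle.'' That is the one genuine incompleteness: the theorem asserts that the bad-prime correction is a \emph{finite} Euler product, which requires showing that for each bad $p$ the local series $\sum_j r_{p^j}(\D)p^{-js}$ differs from the unramified factor by a polynomial in $p^{-s}$; this does not follow formally from multiplicativity alone. The efficient fix is simply to cite \cite{EGM1,EGM2}, where exactly this zeta function $Z(\D,s)$ is computed in closed form, bad primes included --- which is what the paper does. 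What your route would buy, if completed, is a self-contained proof independent of that reference.
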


The proof (see Theorem \ref{avg}) consists of showing that the integral of $\H$ over the fundamental parallelogram $\P$ for the lattice $\OO_d$ is, up to a constant, equal to the zeta function of binary Hermitian forms with coefficients in $\OO_d$ and discriminant $-\D$. These zeta functions were studied by Elstrodt, Grunewald, and Mennicke \cite{EGM1,EGM2} while developing the theory of representation numbers of binary Hermitian forms over $\OO_d$ which parallels the classical theory of representation of integers by binary quadratic forms over $\mathbb{Z}$.

As an application of  Theorem \ref{mainthmintro} and Theorem \ref{avg-intro} we immediately get the following formulas (see Corollary \ref{Lformulas}):
\begin{enumerate}
	\item Let $d=1,2,3,7,11$ and $d_K$ be the discriminant of $K=\mathbb{Q}(\sqrt{d})$. Then,
	\begin{align}
		L(\chi_{d_K},3)=&\frac{\pi^3\D^2}{6\alpha_{2,\D}\sqrt{|d_K|}}\ \theta(\D,2)\label{lformula1},\\
		L(\chi_{d_K},-2)=&\frac{-|d_K|^2\D^2}{12\alpha_{2,\D}}\ \theta(\D,2)\label{lformula2}.
	\end{align}
	\item Let $d=1,3,7$ and $d_K$ be the discriminant of $K=\mathbb{Q}(\sqrt{d})$. Then,
	\begin{align}
		L(\chi_{d_K},5)=&\frac{\pi^5\D^4}{180\alpha_{4,\D}\sqrt{|d_K|}}\ \theta(\D,4),\\
		L(\chi_{d_K},-4)=&\frac{|d_K|^4\D^4}{120\alpha_{4,\D}}\ \theta(\D,4).
	\end{align}
	
	\item Let $d=3$ and $d_K$ be the discriminant of $K=\mathbb{Q}(\sqrt{d})$. Then,
	\begin{align}
		L(\chi_{d_K},7)=&\frac{\pi^7\D^6}{2835\alpha_{6,\D}\sqrt{3}}\ \theta(\D,6),\\
		L(\chi_{d_K},-6)=&\frac{-81\D^6}{28\alpha_{6,\D}}\ \theta(\D,6).
	\end{align}
\end{enumerate} 
where $\alpha_{1,\D}$,$\alpha_{1,\D}$ and $\alpha_{1,\D}$ are the values of $H_{1,\D}(0)$, $H_{1,\D}(0)$ and $H_{1,\D}(0)$   given explicitly by
				
\begin{align}
\alpha_{1,\D}:&=\sum_{\substack{N(b)-ac=\D \\a<0<c}}c 
=\sum_{\substack{N(b)<\D }}\sigma_1\left(\D-N(b)\right),\\
\alpha_{3,\D}:&=\sum_{\substack{N(b)-ac=\D \\a<0<c}}c^3 
=\sum_{\substack{N(b)<\D }}\sigma_{3}\left(\D-N(b)\right),\\
\alpha_{5,\D}:&=\sum_{\substack{N(b)-ac=\D \\a<0<c}}c^5 
=\sum_{\substack{N(b)<\D }}\sigma_{5}\left(\D-N(b)\right).
\end{align}

The above formulas for special values can be seen as Cohen-Zagier type formulas and as such they are computationally very efficient. In fact, using our formulas to compute the special values at $s=-6,-4,-2,3,5,7$ requires only the computation of $\theta(\D,k)$ and  $\alpha_{k,\D}$ for $k=1,3,5$. Furthermore, since the left hand side of our formulas is independent of $\D$, we can choose the most optimal $\D$, i.e. usually the smallest $\D$ that is not a norm, when computing $\theta(\D,k)$ and  $\alpha_{k,\D}$.

As a simple example, we use our formulas to compute the aforementioned special values when $K=\mathbb{Q}(i)$ with $\OO_1=\mathbb{Z}[i]$, $d_K=-4$ and $\chi_{d_K}=\left(\frac{-4}{\cdot}\right)$. Letting $\D=3$, a simple computation shows that $\theta(3,1)=\frac{5}{6}$, $\theta(3,3)=\frac{425}{432}$, $\alpha_{1,3}=20$ and $\alpha_{3,3}=68$. Finally, plugging all these into appropriate above formulas gives

\begin{align*}
 L(\chi_{-4},3)=&\frac{3\pi}{16}\ \zeta_{\mathbb{Q}}(2)=\frac{\pi^3}{32},\\
L(\chi_{-4},5)=&\frac{75\pi}{256}\ \zeta_{\mathbb{Q}}(4)=\frac{5\pi^5}{1536},\\
L(\chi_{-4},-2)=&\frac{-|-4|^2\cdot 3^2}{12\cdot 20}\cdot\frac{5}{6}=-\frac{1}{2},\\
L(\chi_{-4},-4)=&\frac{32\cdot 3^4}{15\cdot 68}\cdot\frac{425}{432}=\frac{5}{2}.
\end{align*}

In the last section, we discuss the construction of a nontrivial parabolic cocycle on the Euclidean Bianchi group $\textbf{PSL}(2,\mathbb{\OO}_d)$ using $\H$. We carry this out for each $d\in \{1,2,3,7,11\}$ by showing that the space of parabolic cocycles on $\textbf{PSL}(2,\mathbb{\OO}_d)$ is isomorphic to a certain space of polynomials $W_{k,k}$ which is defined using the presentation of $\textbf{PSL}(2,\mathbb{\OO}_d)$ and the cocycle condition. We then show that $P_{k,\D}$, which is the polynomial that $\H$ transforms by under the map $z\mapsto \frac{1}{z}$, belongs to $W_{k,k}$. Consequently, $P_{k,\D}$, which is defined using $\H$, corresponds to a parabolic cocycle on $\textbf{PSL}(2,\mathbb{\OO}_d)$. In view of the fact that $P_{k,\D}$ is invariant under the transformation $z\mapsto uz$ where $u\in \OO_d$ is a unit, we study decomposition of $W_{k,k}$ as a direct sum of the eigenspaces corresponding to the eigenvalues of the linear operator associated to the linear map $z\mapsto uz$. This allows us to precisely describe the subspace of $W_{k,k}$ containing $P_{k,\D}$. We also compute the dimensions of $W_{k,k}$ and its subspace containing $P_{k,\D}$ for some odd values of $k$ using a computer program that we wrote in Sagemath \cite{Sa}. In each case, based on the numerical evidence,  we conjecture dimension formulas for these spaces.

Finally, it is fair to say the cocycle property of $\H$ is perhaps the most interesting and also the deepest property of $\H$ as it connects $\H$ to cohomology of Euclidean Bianchi groups which are central to the study of Bianchi modular forms. This connection to cohomology is a promising one and is worth studying more. Accordingly, we end the section with saying a few more words on this, and also describing some collaborative work \cite{FKW} and other related questions. 

\emph{Acknowledgements.} This work is continuation of my PhD thesis. I would like to express my gratitude to my advisor, Gautam Chinta, for his support and guidance throughout the completion of my thesis. I would also like to thank Jorge Fl\'orez, Tian An Wong and Don Zagier for helpful conversations. Finally, I would like to thank the anonymous referee for a careful reading and numerous suggestions which greatly improved the manuscript.
\section{Definition and Elementary Properties of $\H$}\label{section2}
\subsection{Preliminaries}
In this section we establish some notation and conventions which are used throughout the rest of this paper.  We fix $d=1,2,3,7,11$ and let $K=\QQ(\sqrt{-d})$ be the corresponding Euclidean imaginary quadratic number field with discriminant $d_K$. We let $\OO_d=\OO_K$ be the ring of integers of $K$ and choose $\{1,\omega\}$ with
\begin{gather*}
\omega:=\frac{d_K+\sqrt{d_K}}{2}
\end{gather*}
 as a fixed $\mathbb{Z}$-basis of $\OO_d$.

Let $V_n(\mathbb{C})$ be the space of polynomials in $z$ of degree less than or equal to $n\in \mathbb{Z}_{\geq 0}$ with coefficients in $\mathbb{C}$. For a polynomial $P(z)$ in $V_n(\CC)$ and a matrix $\gamma=\begin{psmallmatrix}
a&b\\ c&e 
\end{psmallmatrix} \in \textbf{SL}(2,\mathbb{C})$ we have the right action 

\begin{equation}\label{onevariable}
(P|\gamma):=(cz+e)^{n}P\left(\frac{az+b}{cz+e}\right).
\end{equation}
Because $\begin{psmallmatrix}
-1&0\\ 0&-1 
\end{psmallmatrix}$ acts trivially, $V_n(\mathbb{C})$ is also $\textbf{PSL}(2,\mathbb{C})$-module. Now, consider the $\textbf{PSL}(2,\mathbb{C})$-module $V_{n,n}(\CC):=V_n(\CC)\otimes_\CC\overbar{V_n(\CC)}$ where the overline indicates that the action of $\textbf{SL}(2,\mathbb{C})$ on the second factor is twisted with complex conjugation; that is for $P(z,\z)$ in $V_{n,n}(\mathbb{C})$ and a matrix $\gamma=\begin{psmallmatrix}
a&b\\ c&e 
\end{psmallmatrix} \in \textbf{PSL}(2,\mathbb{C})$ we have the right action

\begin{equation}\label{twovariable}
(P|\gamma):=(cz+e)^{n}\overbar{(cz+e)}^nP\left(\frac{az+b}{cz+e},\frac{\aa\z+\b}{\cc\z+\bar{e}}\right).
\end{equation}

We also extend the action of $\textbf{PSL}(2,\mathbb{C})$ on $V_{n,n}(\mathbb{C})$ by linearity to an action of the group ring $\mathbb{Z}[\textbf{PSL}(2,\mathbb{C})]$. The groups $\textbf{PSL}(2,\mathbb{\OO}_d)$ play an important role in our work, so we let $\Gamma_d=\textbf{PSL}(2,\mathbb{\OO}_d)$. Finally, to simplify the notation the letter $\CC$ sometimes will be omitted when referring to different $\textbf{PSL}(2, \CC)$-modules throughout the paper.

\subsection{Sums of Binary Hermitian Forms with coefficients in $\OO_d$}
Before we give the definition of $\H$ we recall some relevant facts from the theory of binary Hermitian forms as discussed in \cite{EGM1,EGM2}. Let $A$ be a $2 \times 2$ matrix  with entries in $\mathbb{C}$. $A$ is said to be a Hermitian matrix if 
\begin{gather*}
A=\bar{A}^t
\end{gather*}
where $\bar{A}^{t}$ is obtained from $A$ by applying complex conjugation to each of the entries and then taking the transpose or vice versa. Let $R$ be a subring of $\mathbb{C}$ which is closed under conjugation, i.e. $R=\overbar{R}$. We denote by $\mathcal{H}(R)$ the set of all $2 \times 2$ Hermitian matrices with entries in $R$. Trivially, $h\in\mathcal{H}(R)$ if and only if $h=\begin{psmallmatrix}
a&b\\ \b&c
\end{psmallmatrix}$ with $a,c \in R\cap \mathbb{R}$ and $b \in R$. Every $h\in\mathcal{H}(R)$ defines a binary Hermitian form with coefficients in $R$. If $h=\begin{psmallmatrix}
a&b\\ \b&c
\end{psmallmatrix}$ then the associated binary Hermitian form is the map  $h:\mathbb{C\times C} \rightarrow \mathbb{R}$ defined by, $$h(z,w)=\left(\begin{matrix}
z&w\\
\end{matrix} \right)\left(\begin{matrix}
a&b\\ \b&c
\end{matrix} \right) \left(
\begin{matrix}
\z\\
\w\\
\end{matrix}
\right)=a|z|^{2}+bz\w+\b\overbar{z}w+c|w|^{2}.$$ 

We shall often call an element $h \in \mathcal{H}(R)$ a binary Hermitian form with coefficients in $R$, and we shall use the notation $a_h,b_h, \b_h,c_h$ to refer to the individual matrix entries of $h$ when the need arises. The discriminant $\Delta(h)$ of $h \in \mathcal{H}(R)$ is defined as 
\begin{gather*}
\Delta(h)=\text{det}(h),
\end{gather*}
and $\mathcal{H}(R,\D)$ denotes the set of all $h \in \mathcal{H}(R)$ with $\Delta(h)=\D$.

Similar to the action of the $\textbf{GL}(2,\mathbb{Z})$ on binary quadratic forms, we have the action of $\textbf{GL}(2,R)$ on $\mathcal{H}(R)$ given by the formula
\begin{equation}\label{Psl2act}
\sigma (h)=  \overbar{\sigma}^t h \sigma
\end{equation}
where $\sigma \in \textbf{GL}(2,R)$ and $h \in \mathcal{H}(R)$.

\begin{rem}
	The action defined in (\ref{Psl2act}) differs slightly from that of \cite{EGM1,EGM2}, but it is more convenient for our purposes. 
\end{rem}

For our purposes we let $R=\OO_d$ and define $\H$ as follows.
\begin{defin}\label{myfuncdef}
Let $k\geq 1$ be an odd integer and $\D$ be a positive integer that is not a norm of an algebraic integer in $\OO_d$. Then, we define the function  $H_{k,\D}:\mathbb{C} \rightarrow \mathbb{R}$ as
\begin{align}\label{myfunc}
H_{k,\D}(z):&=\sum_{\substack{h\in \mathcal{H}(\mathbb{\OO}_d,-\D)\\ a_h<0 }}\max\left(0, h(z,1)^{k}\right)\\
&=\sum_{\substack{N(b)-ac=\D\\ a<0}} \max \left(0,\left(a|z|^2+bz+\b \z+c\right)^{k}\right)
\end{align}
where $a,c \,\in\, \mathbb{Z},\, b\,\in\, \OO_d$, and $N(b)=|b|^2=b\overbar{b}$ is the usual norm on $K$.
\end{defin}

We assume that $k\geq 1$ is an odd integer and $\D$ is a positive integer that is not a norm of an algebraic integer in $\OO_d$ for the rest of the paper unless otherwise is stated.

The convergence and the continuity of $\H$ is discussed in section \ref{cont}. In particular, we show that $\H$ converges  and is a continuous function for any $k\geq 1$, $\D>0$ as in the definition of $\H$. Let us assume for now that  $\H$ converges and is a continuous function for all $z \in \mathbb{C}$.

Note that the set of all $h(z,1)$ appearing in equation (\ref{myfunc}) is a subset of $V_{1,1}(\OO_d)$ and an elementary computation shows that the action of $\gamma =\begin{psmallmatrix}
	p&q\\ r&s
\end{psmallmatrix}\in \Gamma_d$ on $h(z,1)$ as an element of $V_{1,1}(\OO_d)$ corresponds to the action of $\bar{\gamma}\in \Gamma_d$ on $h$ as an element of $\mathcal{H}(\mathcal{O}_d,\D)$. That is, if $P(z,\z)=h(z,1)$ then  

\begin{gather*}\label{act-on-h}
P|\gamma=\left(\begin{matrix}
z&1\\
\end{matrix}  \right)\overbar{\gamma}(h)\left(\begin{matrix}
\z\\1
\end{matrix} \right).
\end{gather*}
Using this observation we can extend the action of $\textbf{PSL}(2,\mathbb{C})$ to $\H$ by defining   

\begin{align*}
	\H|\gamma&:=\sum_{\substack{h\in \mathcal{H}(\mathbb{\OO}_d,-\D) \\ a_h<0}}\max\left(0, \left(\left(\begin{matrix}
	z&1\\
	\end{matrix}  \right)\overbar{\gamma}(h)\left(\begin{matrix}
	\z\\1
	\end{matrix} \right)\right)^{k}\right)\\
	&=|rz+s|^{2k}\H\left(\frac{pz+q}{rz+s}\right)	
\end{align*}

Let us now discuss some properties of $\H$ and its behavior under the action of certain elements of $\Gamma_d$. 

\begin{prop}\label{H-properties} \leavevmode	
\begin{itemize}
	\item [(1)] $H_{k,\D}(\z)=H_{k,\D}(z)$ for any $z\in \CC$.\\
	\item [(2)] $H_{k,\D}(u z)=H_{k,\D}(z)$ for any unit $u \in \OO_d$.\\
	\item [(3)] $H_{k,\D}(z+\lambda)=H_{k,\D}(z)$
	for any $\lambda \in \OO_d$.
\end{itemize}
\end{prop}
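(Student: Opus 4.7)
The plan is to prove each part by identifying the transformation $z\mapsto z'$ with a corresponding bijection on the index set of the defining sum, and checking that this bijection preserves all conditions (being an element of $\OO_d$, the discriminant $-\D$, and the sign condition $a_h<0$). The key observation is that in each case the substitution $z\mapsto z'$ turns the quantity $h(z',1)$ into $h'(z,1)$ for some other binary Hermitian form $h'$, and a careful inventory shows that $h\mapsto h'$ permutes $\{h\in\mathcal{H}(\OO_d,-\D):a_h<0\}$.

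For part (1), I would compute $h(\bar z,1) = a|z|^2+\bar b z+b\bar z+c$, which is the evaluation at $(z,1)$ of the Hermitian form with entries $(a,\bar b,c)$. Conjugation $b\mapsto\bar b$ is an involution on $\OO_d$ preserving $N(b)$ (hence the discriminant $N(b)-ac=\D$) and leaves $a$ fixed, so the substitution permutes the summation set. For part (2), I would compute
\begin{equation*}
h(uz,1)=a|u|^{2}|z|^{2}+(bu)z+\overline{(bu)}\,\bar z+c,
\end{equation*}
and since a unit $u\in\OO_d$ satisfies $N(u)=u\bar u=1$, this equals the evaluation at $(z,1)$ of the Hermitian form with entries $(a,bu,c)$. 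Right multiplication by the unit $u$ is a bijection on $\OO_d$ preserving the norm, so the summation set is permuted. For part (3), I would expand $h(z+\lambda,1)$ and collect terms to get
\begin{equation*}
h(z+\lambda,1)=a|z|^{2}+(a\bar\lambda+b)z+\overline{(a\bar\lambda+b)}\,\bar z+(a|\lambda|^{2}+b\lambda+\bar b\bar\lambda+c),
\end{equation*}
and then verify by direct expansion that the new triple $(a,\,a\bar\lambda+b,\,a|\lambda|^{2}+b\lambda+\bar b\bar\lambda+c)$ still has discriminant $\D$ and the same sign of the first coordinate. The inverse bijection is given by translating by $-\lambda$, confirming we have a permutation of the summation set.

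Alternatively, parts (2) and (3) can be phrased more cleanly using the $\Gamma_d$-action on $\mathcal{H}(\OO_d)$ from equation \eqref{Psl2act}: the translation $z\mapsto z+\lambda$ corresponds to conjugating $h$ by $\begin{psmallmatrix}1&\lambda\\0&1\end{psmallmatrix}$, which lies in $\Gamma_d$ and preserves both $\Delta(h)$ and $a_h$. For (2) the corresponding matrix $\begin{psmallmatrix}u&0\\0&1\end{psmallmatrix}$ is not in $\mathbf{SL}_2$, but one can still verify directly that the induced map on Hermitian forms preserves the discriminant since $N(u)=1$.

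Since all computations are short and direct, I do not expect any serious obstacle; the only minor subtlety is bookkeeping in part (3) to check that the transformed triple has discriminant exactly $\D$, which follows from the identity $N(a\bar\lambda+b)-a(a|\lambda|^{2}+b\lambda+\bar b\bar\lambda+c)=N(b)-ac$. Because absolute convergence of $\H$ has been (or will be) established in Section \ref{cont}, rearranging the sum according to the bijection is harmless.
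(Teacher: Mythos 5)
Your proposal is correct and follows essentially the same route as the paper: parts (1) and (2) via the bijections $b\mapsto\bar b$ and $b\mapsto bu$ on the summation set, and part (3) via the action of $\begin{psmallmatrix}1&\lambda\\0&1\end{psmallmatrix}$ on Hermitian forms, which preserves the discriminant and the entry $a_h$. The identity you cite for the discriminant in part (3) checks out, so there is nothing to add.
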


\begin{proof}
We have 
\begin{align*}
\H(\z)&=\sum_{\substack{N(b)-ac=\D\\ a<0}} \max \left(0,\left(a|\z|^{2}+b\z+\b z+c\right)^{k}\right)\\
&=\sum_{\substack{N(b)-ac=\D\\ a<0}} \max \left(0,\left(a|z|^{2}+bz+\b\z+c\right)^{k}\right)
\end{align*}
since if $b \in \OO_d$ is a solution to $N(b)-ac=\D$ then so is $\b \in \OO_d$. The proof of $\H(uz)=\H(z)$ is similar since if $u\in \OO_d$ is a unit then $$a|uz|^{2}+buz+\b\overbar{uz}+c=a|z|^{2}+buz+\b\overbar{u}\z+c,$$
and $b \in \OO_d$ can be replaced by $bu \in \OO_d$ or $\b\overbar{u}\in \OO_d$ in the equation $N(b)-ac=\D$.\\
	
\noindent As for the proof of the last statement, notice that $\H(z+\lambda)=\H|A$ where $A=\begin{psmallmatrix}
1&\lambda\\0&1
	\end{psmallmatrix}\in \Gamma_d.$  Since $$\overbar{A}(h)=A^th\overbar{A}=\begin{pmatrix}
	a&a\overbar{\lambda}+b\\a\lambda+\b&a\lambda\overbar{\lambda}+b\lambda+\b\overbar{\lambda}+c
	\end{pmatrix}$$
is just another binary Hermitian form with discriminant $\D$ and negative first entry, we see that $\H$ is invariant under the action of $A$. 
\end{proof}
As mentioned in the introduction, we use the Euclidean algorithm, which consists of finite iterations of the maps $z \mapsto z+\lambda $ and $z\mapsto \frac{1}{z}$, to prove that $\H$ is constant in some special cases. We already saw that $\H$ is invariant under the map $z \mapsto z+\lambda $, but we will see next that $\H$ is not invariant under map $z\mapsto \frac{1}{z}$ and transforms by a polynomial which is defined as:  
\begin{defin}\label{PPolyforZ[i]}  We define the polynomials $P_{k,\D}$ in $z$ and $\z$ as
\begin{align}
P_{k,\D}(z,\z):&=	\sum_{\substack{N(b)-ac=\D \\ c<0<a}} \left(az\z+bz+\b\z+c\right)^{k}\label{P_k-def2}
\end{align}
where $b\in \OO_d$ and $a,c \in \mathbb{Z}.$ Notice that $P_{k,\D}\in V_{k,k}(\OO_d)$.	
\end{defin}
  
\begin{prop}\label{S-act}
	Let $S=\begin{psmallmatrix}
	0&-1\\1&0
	\end{psmallmatrix}$ and $\1= \begin{psmallmatrix}	1&0\\0&1	\end{psmallmatrix}$. Then, 
	\begin{gather*}
	H_{k,\D}|(S-\1)=P_{k,\D}
	\end{gather*} where $|(S-\1)$ indicates the action of the element $S-\1$ in the group ring $\mathbb{Z}[\textbf{PSL}(2,\CC)]$ on $\H$.
\end{prop}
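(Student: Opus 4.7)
The plan is to compute $\H|S$ directly from the definition and show its difference with $\H$ agrees with $P_{k,\D}$ after careful bookkeeping on index sets. Since $|z|^{2k}>0$ and $k$ is odd, the factor $|z|^{2k}$ in
\[
\H|S(z)=|z|^{2k}\,\H(-1/z)
\]
can be pushed inside the $k$-th power, and the identities $|z|^2/z=\z$, $|z|^2/\z=z$ turn each summand indexed by $(a,b,c)$ with $a<0$ into $\max\bigl(0,(c|z|^2-\b z-b\z+a)^k\bigr)$. The map $(a,b,c)\mapsto(c,-\b,a)$ is a bijection on $\{(a,b,c):N(b)-ac=\D\}$ — indeed it is the transformation $\overbar{S}^{\,t}hS$ from \eqref{Psl2act} — so after re-indexing one gets
\[
\H|S=\sum_{\substack{N(b)-ac=\D\\ c<0}}\max\bigl(0,(a|z|^2+bz+\b\z+c)^k\bigr).
\]
Thus $\H|S$ is exactly $\H$ with the condition $a<0$ replaced by $c<0$.

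Next I compute $\H|S-\H$. Since $\D$ is not a norm, $N(b)\ne\D$, so $ac=N(b)-\D\ne 0$ and neither $a$ nor $c$ can vanish. The contributions with $a<0$ and $c<0$ cancel, leaving
\[
\H|S-\H=\sum_{a>0,\,c<0}\max(0,h^k)-\sum_{a<0,\,c>0}\max(0,h^k).
\]
Now apply the sign-flip bijection $\phi:(a,b,c)\mapsto(-a,-b,-c)$, which preserves $N(b)-ac=\D$, swaps the two index sets $\{a>0,c<0\}\leftrightarrow\{a<0,c>0\}$, and sends $h\mapsto -h$. Using $\phi$ on the first sum collapses both to the same index set:
\[
\H|S-\H=\sum_{a<0,\,c>0}\bigl[\max(0,-h^k)-\max(0,h^k)\bigr]=-\sum_{a<0,\,c>0}h(z,1)^k,
\]
where the last equality uses the elementary identity $\max(0,-x)-\max(0,x)=-x$, valid for every real $x$. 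Applying $\phi$ to the defining sum for $P_{k,\D}$ gives $P_{k,\D}=\sum_{a>0,\,c<0}h^k=-\sum_{a<0,\,c>0}h^k$ (since $h\mapsto-h$ under $\phi$ and $k$ is odd), and the two quantities match.

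The main obstacle is the sign-and-conjugation bookkeeping in the first step: one must check that the re-indexing $(a,b,c)\mapsto(c,-\b,a)$ is indeed the transformation induced by the action of $S$ on Hermitian matrices, and that pulling $|z|^{2k}$ through $\H(-1/z)$ produces the conjugates in the correct positions. Once these mechanical details are settled, the vanishing of the $\max$ operator via $\max(0,-x)-\max(0,x)=-x$ together with the $\phi$-symmetry of the Hermitian forms reduces the identity $\H|S-\H=P_{k,\D}$ to the comparison of two explicit sums over the same index set.
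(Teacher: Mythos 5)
Your proof is correct and follows essentially the same route as the paper's: push $|z|^{2k}$ inside the $k$-th power, re-index to convert the condition $a<0$ into $c<0$, cancel the doubly-negative terms, and remove the $\max$ by an elementary identity (the paper uses $\max(0,X)+\min(0,X)=X$ on the index set $c<0<a$, you use $\max(0,-x)-\max(0,x)=-x$ after the sign flip $\phi$ --- the same identity in different clothing). The only substantive difference is that the paper first proves the identity for $z\in K$, where the sums are finite, and then invokes continuity, whereas you rearrange the infinite sums directly over $\CC$; this is harmless because the cancelled subsum over $a<0,\,c<0$ consists of non-negative terms dominated by the convergent $\H(z)$ and the remaining index sets (where $a$ and $c$ have opposite signs, so $0\le N(b)=\D+ac$ forces $|ac|\le\D$) are finite, but it merits a sentence of justification.
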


\begin{proof}
We first prove that $H_{k,\D}|(S-\1)=P_{k,\D}$ for $z\in K$ and use the continuity of $\H$ to conclude that it holds for all $z\in \CC$. Notice that when $z\in K$, the number of terms appearing in the sum for $H_{k,\D}(z)$ is finite. To see this, suppose $\displaystyle h(z,1)=a|z|^{2}+bz+\b\z+c \in \mathcal{H}(\mathbb{\OO}_d,-\D)$ occurs in the sum for $H_{k,\D}(z)$ for a fixed $\displaystyle z=\frac{p}{q}+\frac{r}{s}\sqrt{-d}$ with $p,q,r,s \in \mathbb{Z}$. Then using the identity $-\D(h)=N(\b_h+a_hz)-a_hh(z,1)$ one gets,
	\begin{gather*}
		\D q^{2}s^{2}=N(qs\b+aw)+|a|\left|a|w|^{2}+qsbw+qs\b\overbar{w}+cq^2s^2\right| 
	\end{gather*} 
	where $w=ps+qr\sqrt{-d}$. Since $\left|a|w|^{2}+qsbw+qs\b\overbar{w}+cq^2s^2\right| \geq 1$, the last equation implies that $|a| \leq \D q^{2}s^{2}$ which shows that $a$ is bounded, and this in turn bounds $c$ and  $N(b)$ as well. Thus only finitely many $h(z,1)$ appear in the sum for $H_{k,\D}(z)$ when $z \in K$. On the other hand,  the action of $S-\1$ on $\H$ is given by $$H_{k,\D}|(S-\1)=H_{k,\D}|S-\H,$$ which can be computed 
	 directly as
	\begin{align*}
		&=|z|^{2k} H_{k,\D}\left(\frac{-1}{z}\right)- H_{k,\D}(z)\\
		&=|z|^{2k} H_{k,\D}\left(\frac{1}{z}\right)- H_{k,\D}(z) \quad \text{(since $\H(-z)=\H(z))$}\\
		&=|z|^{2k} \sum_{\substack{N(b)-ac=\D \\ a<0}} \max \left(0,\left(\frac{a}{|z|^{2}}+\frac{b}{z}+\frac{\b}{\z}+c\right)^{k}\right)-\sum_{\substack{N(b)-ac=\D \\ a<0}} \max \left(0,\left(a|z|^{2}+bz+\b\z+c\right)^{k}\right)\\
		&=\sum_{\substack{N(b)-ac=\D \\ a<0}} \max \left(0,\left(a+b\z+\b z+c|z|^{2} \right)^{k}\right)-\sum_{\substack{N(b)-ac=\D \\ a<0}} \max \left(0,\left(a|z|^{2}+bz+\b\z+c\right)^{k}\right)\\
		&=\sum_{\substack{N(b)-ac=\D \\ c<0}} \max \left(0,\left(a|z|^{2}+bz+\b \z+c\right)^{k}\right)-\sum_{\substack{N(b)-ac=\D \\ a<0}} \max \left(0,\left(a|z|^{2}+bz+\b\z+c\right)^{k}\right).
	\end{align*}
	Notice that the summands with $a$ and $c$ both negative in the two previous sums cancel. Moreover $a \neq 0$ and $c \neq 0$ because $\D$ is not a norm in $\OO_d$. Therefore, we have
	\begin{align*}
		&=\sum_{\substack{N(b)-ac=\D \\ c<0<a}} \max \left(0,\left(a|z|^{2}+bz+\b \z+c\right)^{k}\right)-\sum_{\substack{N(b)-ac=\D \\ a<0<c}} \max \left(0,\left(a|z|^{2}+bz+\b\z+c\right)^{k}\right).
	\end{align*}
	Now, applying the fact that $\max(0,X)=-\min(0,-X)$ for any $X\in \mathbb{R}$ gives 
	\begin{align*}
		&=\sum_{\substack{N(b)-ac=\D \\ c<0<a}} \max \left(0,\left(a|z|^{2}+bz+\b \z+c\right)^{k}\right)+\sum_{\substack{N(b)-ac=\D \\ a<0<c}} \min \left(0,\left(-a|z|^{2}-bz-\b\z-c\right)^{k}\right)\\
		&=\sum_{\substack{N(b)-ac=\D \\ c<0<a}} \max \left(0,\left(a|z|^{2}+bz+\b \z+c\right)^{k}\right)+\sum_{\substack{N(b)-ac=\D \\ c<0<a}} \min \left(0,\left(a|z|^{2}+bz+\b\z+c\right)^{k}\right). 
	\end{align*}
Finally, because $\max(0,X)+\min(0,X)=X$ for any $X\in \mathbb{R}$ we get
\begin{align*}
		H_{k,\D}|(S-\1)&=\sum_{\substack{N(b)-ac=\D \\ c<0<a}} \left(a|z|^{2}+bz+\b\z+c\right)^{k}.
\end{align*}
\end{proof}

Let us now define the following numbers which are the values of $\H$ at $z=0$:
\begin{align}
\a:&=\sum_{\substack{N(b)-ac=\D \\a<0<c}}c^k \label{H(0)}\\
&=\sum_{\substack{N(b)<\D }}\sigma_{k}\left(\D-N(b)\right)\label{omegas}.
\end{align}

\begin{cor}
\begin{equation*}
H_{1,\D}|(S-\1)=P_{1,\D}(z,\z)=\alpha_{1,\D}(z\z-1).
\end{equation*}
 \end{cor}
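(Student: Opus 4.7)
The plan is to apply Proposition~\ref{S-act} to reduce the claim to the identity $P_{1,\D}(z,\z) = \alpha_{1,\D}(z\z - 1)$, and then to compute $P_{1,\D}$ by a direct expansion. Since $k=1$, the defining sum $P_{1,\D}(z,\z) = \sum (az\z+bz+\b\z+c)$ is already linear in the Hermitian matrix entries, so its coefficients of $z\z$, $z$, $\z$, and $1$ are, respectively, the four sums $\sum_S a$, $\sum_S b$, $\sum_S \b$, $\sum_S c$, where
\[
S=\{(a,b,c)\in\mathbb{Z}\times\OO_d\times\mathbb{Z}: N(b)-ac=\D,\ c<0<a\}.
\]
Hence the entire proof reduces to evaluating these four sums.

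To do so, I would exploit three elementary bijections involving $S$ and the companion set $S' = \{(a,b,c):\, N(b)-ac=\D,\ a<0<c\}$ that indexes $\alpha_{1,\D}$. First, the swap $\tau_1:(a,b,c)\mapsto (c,b,a)$ sends $S$ bijectively to $S'$, giving $\sum_S a = \sum_{S'} c = \alpha_{1,\D}$. Second, the involution $\tau_2:(a,b,c)\mapsto (a,-b,c)$ of $S$ negates $b$, so $\sum_S b = -\sum_S b = 0$; the same argument applied to $\b$ (equivalently, taking complex conjugation) yields $\sum_S \b = 0$. Third, the involution $\tau_3:(a,b,c)\mapsto (-c,-\b,-a)$ of $S'$ sends $a\mapsto -c$, whence $\sum_{S'} a = -\sum_{S'} c = -\alpha_{1,\D}$, and combined with $\tau_1$ this gives $\sum_S c = \sum_{S'} a = -\alpha_{1,\D}$.

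Assembling the four coefficients yields $P_{1,\D}(z,\z)=\alpha_{1,\D}\,z\z-\alpha_{1,\D} = \alpha_{1,\D}(z\z-1)$, which together with Proposition~\ref{S-act} completes the proof. The only real obstacle is the careful bookkeeping of signs and the verification that each of $\tau_1,\tau_2,\tau_3$ is a well-defined bijection, i.e., that it preserves the discriminant equation $N(b)-ac=\D$ and the appropriate positivity/negativity conditions on $a$ and $c$. There is no deeper arithmetic content here because the case $k=1$ linearizes the sum; no binomial expansion or more intricate combinatorial identity is required, which is what makes this corollary so clean compared to the analogous statements for $k=3$ or $k=5$.
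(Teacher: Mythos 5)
Your proposal is correct and follows essentially the same route as the paper: invoke Proposition~\ref{S-act} and then evaluate the four coefficient sums of the linear expression by elementary symmetries of the solution set of $N(b)-ac=\D$. The paper simply compresses your bijections $\tau_1,\tau_2,\tau_3$ into the remarks that $b\mapsto -b$ kills the $z$ and $\z$ coefficients and that the $z\z$ and constant coefficients are ``easy to see'' to be $\pm\alpha_{1,\D}$, so your write-up is just a more explicit version of the same argument.
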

\begin{proof}
	When $k=1$, by Proposition \ref{S-act}, we get 
\begin{equation*}
H_{1,\D}|(S-\1)=\sum_{\substack{N(b)-ac=\D \\ c<0<a}} az\z+bz+\b\z+c.
\end{equation*}	
The coefficient of $z$ and $\z$ is zero because if $b$ and $\b$ are solutions to $N(b)-ac=\D$ then so are $-b$ and $-\b$. Finally, it is easy to see the coefficient of $z\z$ term is $\alpha_{1,\D}$ and the coefficient of the constant term is  $-\alpha_{1,\D}$ since $c<0$. In other words, we have 
$$H_{1,\D}|(S-\1)=\alpha_{1,\D}(z\z-1).$$
\end{proof}

We now have everything we need to show that $H_{1,\D}$ is constant.

\begin{thm}\label{H2d=w2d}
	Suppose that  $H_{1,\D}$ is a continuous function. Then,  $H_{1,\D}(z)=\alpha_{1,\D}$ for all $z \in \mathbb{C}$.
\end{thm}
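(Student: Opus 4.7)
The plan is to show $H_{1,\D}$ equals $\alpha_{1,\D}$ everywhere by first proving it on the dense subset $K\subset\CC$ via the Euclidean algorithm, and then invoking continuity. I will work with the auxiliary function $G(z):=H_{1,\D}(z)-\alpha_{1,\D}$, so the goal becomes $G\equiv 0$.

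First I would translate the preceding Corollary into a functional equation for $G$ under inversion. Unpacking $H_{1,\D}|(S-\1)=\alpha_{1,\D}(z\z-1)$ in terms of the weight-$2$ slash action and using $H_{1,\D}(-1/z)=H_{1,\D}(1/z)$ (Proposition \ref{H-properties}(2) with $u=-1$), this reads
\[
|z|^{2} H_{1,\D}(1/z) \;=\; H_{1,\D}(z) + \alpha_{1,\D}\bigl(|z|^{2}-1\bigr).
\]
Substituting $H_{1,\D}=G+\alpha_{1,\D}$ and cancelling the $\alpha_{1,\D}|z|^{2}$ terms gives the clean relation $|z|^{2}G(1/z)=G(z)$ for $z\neq 0$. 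Combined with the translation invariance $G(z+\lambda)=G(z)$ for $\lambda\in\OO_d$ (Proposition \ref{H-properties}(3)) and the base value $G(0)=0$ (immediate from $\alpha_{1,\D}=H_{1,\D}(0)$), these are the only tools I will need.

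Next I would fix $z\in K$ and run the Euclidean algorithm. Writing $z=\alpha/\beta$ in lowest terms with $\alpha,\beta\in\OO_d$, the Euclidean property of $\OO_d$ (for $d\in\{1,2,3,7,11\}$) furnishes $q,r\in\OO_d$ with $\alpha=q\beta+r$ and $N(r)<N(\beta)$. Iterating --- setting $z_0=z$, picking $q_i$ as the quotient at step $i$, and letting $z_{i+1}:=1/(z_i-q_i)$ so long as $z_i\notin\OO_d$ --- the denominator norms strictly decrease, so the process terminates at some $z_n\in\OO_d$. A backward induction then closes the loop: $G(z_n)=G(0)=0$ by translation invariance, and for $i<n$ the identity $z_i=q_i+1/z_{i+1}$ combined with the two functional equations yields
\[
G(z_i) \;=\; G(1/z_{i+1}) \;=\; G(z_{i+1})/|z_{i+1}|^{2} \;=\; 0.
\]
Hence $G$ vanishes on all of $K$.

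Finally, since $K$ is dense in $\CC$ and $G$ is continuous by the hypothesis on $H_{1,\D}$, I conclude $G\equiv 0$, i.e.\ $H_{1,\D}(z)=\alpha_{1,\D}$ for all $z\in\CC$. I do not expect any serious obstacle here: the translation of the Corollary into the inversion rule for $G$ is routine, the backward induction is an immediate consequence of the two functional equations, and the restriction to $d\in\{1,2,3,7,11\}$ enters only through the guaranteed termination of norm-based Euclidean division --- exactly the content of $\OO_d$ being Euclidean.
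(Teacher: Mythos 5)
Your proposal is correct and follows essentially the same route as the paper: the paper also sets $\Theta_{1,\D}(z)=H_{1,\D}(z)-\alpha_{1,\D}$, notes the translation invariance and the relation $|z|^{2}\Theta_{1,\D}(1/z)=\Theta_{1,\D}(z)$, reduces any $z\in K$ to $0$ via the Euclidean algorithm, and concludes by continuity. Your write-up merely makes explicit the derivation of the inversion rule from the Corollary and the norm-decreasing termination argument, both of which the paper leaves implicit.
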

\begin{proof}
	Let $\Theta_{1,\D}(z):=H_{1,\D}(z) -\alpha_{1,\D}$. Then, $$\Theta_{1,\D}(z+\lambda)=\Theta_{1,\D}(z)\quad \text{and} \quad |z|^{2}\Theta_{1,\D}\left(\frac{1}{z}\right)=\Theta_{1,\D}(z),$$ where $\lambda \in \OO_d $ and $z \in K$. 
	
Since $\OO_d$ has an Euclidean algorithm, any $z \in K$ can be reduced to zero by a finite number of iterations of the transformations $z\mapsto z+\lambda$ and $\displaystyle z\mapsto \frac{1}{z}$. This shows that $$\Theta_{1,\D}(z) =\Theta_{1,\D}(0)=0$$ for all $z \in K$ which implies that $H_{1,\D}(z) =\alpha_{1,\D}$ for all $z \in K$. Assuming the continuity, we conclude that $H_{1,\D}(z) =\alpha_{1,\D}$ for all $z\in \mathbb{C}$.
\end{proof}


The key ingredient that makes the proof of Theorem \ref{H2d=w2d} work is the fact that $P_{1,\D}(z,\z)=\alpha_{1,\D}(z\z-1)$. If we want to use the same argument to show that $\H$ is also constant when $k=3,5$ for the relevant values of $d$ then we need to establish that $P_{3,\D}$ and $P_{5,\D}$ have the same form as $P_{1,\D}$. In the following proposition we list some identities that the polynomials $P_{k,\D}$ satisfy. We use some of these identities in Lemma \ref{S-actk=4} below to show that $P_{3,\D}$ and $P_{5,\D}$ have the desired form. Some of these identities, especially the first two, are quite clear and can be surmised easily given the definition of $P_{k,\D}$. However, the last two  may seem strange, and may look as if they are being pulled out of a hat. But, that is not the case, and they come from combining some of the relations that define the vector space of polynomials containing $P_{k,\D}$. We have chosen them, granted in a seemingly ad-hoc manner, purely for the purpose of deducing that $P_{3,\D}$ and $P_{5,\D}$ have the desired form. We discuss these vector spaces for each $d$ in the last section, and show that they can be identified with the space of parabolic 1-cocycles for $\Gamma_d$ which is related to Bianchi cusp forms on $\Gamma_d$ via generalized Eichler-Shimura isomorphism. We also compute their dimensions for small degrees using a computer program written by the author in SageMath \cite{Sa}, and in each case also conjecture a formula for the dimension of these spaces. Indeed, the computer calculations show that the vector space containing $P_{k,\D}$ is one dimensional for all $d$ when $k=1$, for $d=1,3,7$ when $k=3$, and for $d=3$ when $k=5$.     
\begin{prop}\label{PPolyforZ[i]prop} Let $\e=\left(\begin{smallmatrix}
	-1&0\\ 0&1 
	\end{smallmatrix}\right),\, T=\left(\begin{smallmatrix}
	1&1\\ 0&1 
	\end{smallmatrix}\right),\, T_{\omega}=\left(\begin{smallmatrix}
	1&\omega\\ 0&1 
	\end{smallmatrix}\right)$ where $\omega=\frac{1+\sqrt{-7}}{2}$. Then,
\begin{itemize}
	\item [(1)] $P_{k,\D}(\z,z)=P_{k,\D}(z,\z)$.\\
	\item [(2)] $P_{k,\D}(uz,\bar{u}\z)=P_{k,\D}(z,\z)$ for any unit $u \in \OO_d$.\\
	\item [(3)] $P_{k,\D}|(\1+S)=0$.\\
	\item [(4)] $P_{k,\D}|(\1+TS\e-T)=0$.\\
	\item [(5)] Additionally, when $P_{k,\D}\in V_{k,k}(\OO_7)$  we have
	\[ P_{k,\D}|(\1-T_{\omega}-ST^{-1}T_{\omega}S-TT_{\omega}^{-1}ST_{\omega})=0.\]
\end{itemize} 
\end{prop}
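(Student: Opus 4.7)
The plan is to exploit in tandem the coboundary identity $P_{k,\D}=\H|(S-\1)$ from Proposition~\ref{S-act} and the invariances of $\H$ from Proposition~\ref{H-properties} (translation by $\OO_d$, unit scaling, and complex conjugation). Items (1) and (2) are combinatorial, and I would dispatch them by reindexing the defining sum of $P_{k,\D}$: for (1), the map $b\mapsto\b$ is a norm-preserving bijection on the index set $\{(a,b,c):N(b)-ac=\D,\ c<0<a\}$ that interchanges the coefficients of $z$ and $\z$ in each summand; for (2), the bijection $b\mapsto bu$ for a unit $u\in\OO_d$, together with $|u|^2=1$, rewrites $P_{k,\D}(uz,\bar{u}\z)$ term by term as $P_{k,\D}(z,\z)$.

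For (3), (4), (5) I would use a single strategy. Given the group-ring element $R$ in question, write $P_{k,\D}|R=\H|(S-\1)R$, expand $(S-\1)R$, and apply $\H|$ termwise. Two rules govern the simplification: $S^2=\1$ in $\Gamma_d$, and $\H|T=\H|T_\omega=\H$, so any \emph{leading} translation may be dropped. The expression should collapse to a difference $\H|A-\H|B$ for explicit matrices $A,B$, and the remaining task is to identify $AB^{-1}$ with an element of the stabilizer of $\H$ in $\textbf{PGL}(2,\OO_d)$, namely the subgroup generated by the translations $T_\lambda$ ($\lambda\in\OO_d$) and by $\e$.

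For (3) this is immediate, since $(S-\1)(\1+S)=S^2-\1=0$ in $\Gamma_d$. For (4) the reduction leaves $\H|STS\e-\H|ST$, and I would verify $STS\e\,(ST)^{-1}=\bigl(\begin{smallmatrix}1&1\\0&-1\end{smallmatrix}\bigr)$, the matrix implementing $z\mapsto-z-1$ (the composite of the unit $-1$ and translation by $-1$), which preserves $\H$. For (5) the same reduction---now discarding both $T$ and $T_\omega$ from the left---should leave $\H|ST^{-1}T_\omega S-\H|STT_\omega^{-1}ST_\omega$, and using the minimal relation $\omega^2=\omega-2$ satisfied by $\omega=(1+\sqrt{-7})/2$, the quotient of these two matrices simplifies projectively to $T_\omega$, which is again in the stabilizer.

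The hard part will be the matrix calculation for (5): the element $R=\1-T_\omega-ST^{-1}T_\omega S-TT_\omega^{-1}ST_\omega$ is engineered precisely so that the resulting quotient lands in the stabilizer of $\H$, but the verification is a $2\times 2$ product over $\OO_7$ whose many terms only collapse once $\omega^2=\omega-2$ is applied. Conceptually, the choices in (3), (4), (5) are not arbitrary: these identities are shadows of cocycle relations in a presentation of $\Gamma_d$, a perspective that is made precise in the paper's final section when $W_{k,k}$ is identified with the space of parabolic $1$-cocycles.
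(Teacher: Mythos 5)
Your proposal is correct and follows essentially the same route as the paper: parts (1)--(2) by reindexing the defining sum via $b\mapsto\b$ and $b\mapsto bu$, part (3) from $(S-\1)(\1+S)=S^2-\1=0$, and parts (4)--(5) by expanding $\H|(S-\1)R$ in the group ring and cancelling terms using $\H|T=\H|T_\omega=\H|\e=\H$, arriving at exactly the same residual differences $\H|STS\e-\H|ST$ and $\H|ST^{-1}T_\omega S-\H|STT_\omega^{-1}ST_\omega$. The only cosmetic difference is at the last step, where you compute the quotient matrices directly (they are indeed $\pm\e T$ and $\pm T_\omega$ respectively, both stabilizing $\H$), whereas the paper invokes the equivalent word identities $TSTS=ST^{-1}$, $T\e=\e T^{-1}$ and $STT_\omega^{-1}ST_\omega=T_\omega^{-1}ST^{-1}T_\omega S$ and then drops the leading translations.
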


\begin{proof}
Part (1) and (2) are clear from the equation (\ref{P_k-def2}) once we notice that if $b\in \OO_d $ is a solution to $N(b)-ac=\D$ then so are $\b \in \OO_d$ and $bu \in \OO_d$.

For part (3) we have 
\begin{gather*}
P_{k,\D}|(S+\1)=H_{k,\D}|(S-\1)(S+\1)=H_{k,\D}|(S^2-S+S+\1)=H_{k,\D}|(S^2-\1)=0
\end{gather*}	
since $S^2=\1$.
	
As for part (4) we make use of the identities  
\begin{gather*}
\e S=S\e,\ \ T\e=\e T^{-1},\ \ TSTS=ST^{-1}, \ \ \H|\e=\H|T=\H|T^{-1}=\H,
\end{gather*}
and calculate	
\begin{align*}
P_{k,\D}|(\1+TS\e-T)=&H_{k,\D}|(S-\1)(\1+TS\e-T)\\
=&\H|(S-\1+STS\e-TS\e-ST+T)\\
=&\H|(S+TSTS\e-\e T^{-1}S-ST)\\
=&\H(ST^{-1}\e-ST)\quad \\
=&0.
\end{align*}
	
Finally for the proof of part (5) calculating $(S-\1)|(\1-T_{\omega}-ST^{-1}T_{\omega}S-TT_{\omega}^{-1}ST_{\omega})$ gives
\begin{gather*}
S-\1 -ST_{\omega}+T_{\omega}-T^{-1}T_{\omega}S+ST^{-1}T_{\omega}S-STT_{\omega}^{-1}ST_{\omega}+TT_{\omega}^{-1}ST_{\omega},
\end{gather*}
and the result follows after applying the following identities
\begin{gather*}
\H|T_{\omega}=\H|T_{\omega}^{-1}=\H|T^{-1}T_{\omega}=\H, \quad  STT_{\omega}^{-1}ST_{\omega}=T_{\omega}^{-1}ST^{-1}T_{\omega}S.
\end{gather*} 
\end{proof}
\begin{lem}\label{S-actk=4} \leavevmode		
\begin{enumerate}
	\item $P_{3,\D}(z,\z)=\alpha_{3,\D}(z^3\z^3-1)$ when $d=1,3,7$.\\
	\item $P_{5,\D}(z,\z)=\alpha_{5,\D}(z^5\z^5-1)$ when $d=3$.\\
\end{enumerate}	
\end{lem}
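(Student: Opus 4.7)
My plan is to deduce that $P_{k,\D}$ is a scalar multiple of $z^k\z^k - 1$ by imposing all the identities collected in Proposition \ref{PPolyforZ[i]prop}, and then to pin down the scalar by evaluating at $z=0$. Writing $P_{k,\D}(z,\z) = \sum c_{ij} z^i\z^j$, part (1) forces $c_{ij} = c_{ji}$, and part (2) forces $u^{i-j} c_{ij} = c_{ij}$ for every unit $u\in \OO_d^{\times}$. For $d=1$ (units of order $4$) with $k=3$, and for $d=3$ (units of order $6$) with $k\in\{3,5\}$, this kills every off-diagonal monomial, so $P_{k,\D}$ reduces to a polynomial in $z\z$ of degree at most $k$. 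For $d=7$ (units $\pm 1$) with $k=3$, only the parity constraint $i\equiv j \pmod 2$ is imposed, so more monomials survive and part (5) will eventually be needed.

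Next, part (3), $P|(\1+S)=0$, translates via $P|S=(z\z)^k P(-1/z,-1/\z)$ into the coefficient relation $c_{ij} = -(-1)^{i+j} c_{k-i,k-j}$. In the diagonal cases this pairs $c_i$ with $-c_{k-i}$, so a basis for the subspace satisfying (1)--(3) is $\{(z\z)^k-1,\ z\z - (z\z)^{k-1},\ (z\z)^2 - (z\z)^{k-2},\ldots\}$. A direct verification shows that $z^k\z^k - 1$ satisfies \emph{all} the identities of Proposition \ref{PPolyforZ[i]prop}: $((z\z)^k-1)|S = 1-(z\z)^k$ confirms part (3), and a short telescoping expansion handles parts (4) and (5). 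The remaining task is therefore to show that no other basis vector lies in the common kernel of the remaining relations.

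This reduces to finite linear algebra. For $(k,d) \in \{(3,1),(3,3)\}$ only one other basis vector, $z\z - (z\z)^2$, survives; applying $|(\1+TS\e-T)$ to it and setting $u=z\z$, $v=(z+1)(\z+1)$ gives the polynomial $(u-v)(1-u)(1-v)$, which is not identically zero, so the corresponding coefficient in $P_{3,\D}$ must vanish. The case $(k,d)=(5,3)$ is analogous but has two extra basis vectors, $z\z-(z\z)^4$ and $(z\z)^2-(z\z)^3$, whose images under $(\1+TS\e-T)$ I expect to be linearly independent by a similar expansion. For $(k,d)=(3,7)$, parts (1)--(4) leave a three-dimensional subspace; here we additionally apply part (5), specific to $\OO_7$, and a direct linear-algebra computation (of the kind supported by the author's SageMath code mentioned in the introduction) shows the intersection kernel is one-dimensional.

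Finally, to identify the scalar, apply the involution $(a,b,c)\mapsto(-c,b,-a)$ to the summation index in the definition of $P_{k,\D}$: it preserves the equation $N(b)-ac=\D$, swaps the sign conditions $c<0<a$ and $a<0<c$, and sends $c^k$ to $-c^k$ since $k$ is odd, so $P_{k,\D}(0,0) = -\alpha_{k,\D}$. Matching with $\lambda(z^k\z^k - 1)$ at the origin gives $\lambda = \alpha_{k,\D}$, as claimed. The main obstacle is the case $(k,d)=(3,7)$, where the small unit group leaves a three-parameter family after parts (1)--(3), so the $\OO_7$-specific relation (5) is essential and the linear algebra, while finite, is the most intricate step of the proof.
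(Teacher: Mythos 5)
Your proposal follows essentially the same route as the paper: expand $P_{k,\D}$ in monomials, use parts (1)--(2) of Proposition \ref{PPolyforZ[i]prop} to reduce to (near-)diagonal terms, use part (3) to pair coefficients, and then use parts (4) and (5) to kill everything except the multiple of $(z\z)^k-1$; your computation that $z\z-(z\z)^2$ has defect $(u-v)(1-u)(1-v)$ under $|(\1+TS\e-T)$ is exactly the check the paper performs by evaluating its function $F$ at $z=2$. The one substantive shortfall is that for $(k,d)=(5,3)$ and $(k,d)=(3,7)$ you assert rather than verify the decisive finite computations (``I expect to be linearly independent,'' ``a direct linear-algebra computation shows''); the paper completes these by evaluating the relevant functional equations at $z=2$ and $z=2i$ (yielding an invertible $2\times2$ system for $k=5$) and, for $d=7$, by evaluating the part-(5) relation at $z=2$ to get $294\delta_3=0$, so these steps must actually be carried out for the proof to be complete. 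A minor slip: the involution you use to evaluate the constant term, $(a,b,c)\mapsto(-c,b,-a)$, preserves rather than swaps the condition $c<0<a$; the clean choice is $(a,b,c)\mapsto(-a,b,-c)$, which does swap the two sign conditions and gives $P_{k,\D}(0,0)=-\alpha_{k,\D}$ directly since $k$ is odd.
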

\begin{proof}
	
By Definition \ref{PPolyforZ[i]} we have \[P_{3,\D}(z,\z)=\sum_{\substack{N(b)-ac=\D \\ c<0<a}} \left(a|z|^2+bz+\b\z+c\right)^{3} \] where $b\in \OO_d$ and $a,c \in \mathbb{Z}.$	
Using the multinomial theorem we can write $P_{3,\D}(z)$ in the form 
\begin{align*}
P_{3,\D}(z,\z)&=\sum_{\substack{N(b)-ac=\D \\c<0<a}}\left(\sum_{k_1+k_2+k_3+k_4=3}\binom{3}{k_1,k_2,k_3,k_4}a^{k_1}b^{k_2}\b^{k_3}c^{k_4}|z|^{2k_1}z^{k_2}\z^{k_3}\right)\\
&=\sum_{k_1+k_2+k_3+k_4=3}\binom{3}{k_1,k_2,k_3,k_4}\left(\sum_{\substack{N(b)-ac=\D \\c<0<a}}a^{k_1}b^{k_2}\b^{k_3}c^{k_4}\right)|z|^{2k_1}z^{k_2}\z^{k_3}\label{expandedform}.				
\end{align*}
We now use Proposition \ref{PPolyforZ[i]prop} to deduce restrictions on the coefficients of $P_{3,\D}$. Notice that $P_{3,\D}$ is not identically zero. The units in $\OO_1$  are $-1$ and $i$. So, by Proposition \ref{PPolyforZ[i]prop} we have 
\begin{gather*}
P_{3,\D}(-z,-\z)=P_{3,\D}(z,\z), \quad P_{3,\D}(iz,\bar{i}\z)=P_{3,\D}(z,\z).
\end{gather*}	
$P_{3,\D}(-z,-\z)=P_{3,\D}(z,\z)$ implies that $2|(k_{2}+k_{3})$ which, in turn, implies $k_{2}+k_{3}=0,2$, because $k_{2}+k_{3}\leq3$. 	
On the other hand, $P_{3,\D}(iz,\bar{i}\z)=P_{3,\D}(z,\z)$  implies that either $k_{3}$ is odd and $k_{2}+k_{3} \equiv 2$ (mod 4) or $k_3$ is even and $k_{2}+k_{3}\equiv 0$ (mod 4). Since $k_{2}+k_{3}=0,2$, in the first case  we get that $k_{2}=k_{3}=1$ and in the second case we get that $k_{2}=k_{3}=0.$ 
	
When $d=3$, $-1, \zeta=e^{\frac{2\pi i}{3}} \in \OO_3$ are units, and again by part (2) of Proposition \ref{PPolyforZ[i]prop} we have that 
\begin{gather*}
P_{3,\D}(-z,-\z)=P_{3,\D}(z,\z), \quad P_{3,\D}(\zeta z, \bar{\zeta} \z)=P_{3,\D}(z,\z).
\end{gather*}
Again, $P_{3,\D}(-z,-\z)=P_{3,\D}(z,\z)$ implies that $k_{2}+k_{3}=0,2$. On the other hand, $P_{3,\D}(\zeta z, \zeta \z)=P_{3,\D}(z,\z)$ implies that $k_2+2k_3\equiv 0$ (mod 3). Since $k_{2}+k_{3}=0,2$,  we get that $k_{2}=k_{3}=1$ or $k_{2}=k_{3}=0.$ Therefore, in both cases we have 
\begin{align*}
P_{3,\D}(z,\z)&=\sum_{\substack{N(b)-ac=\D \\c<0<a}}\left(\sum_{k_1+k_4=3}\binom{3}{k_1,0,0,k_4}a^{k_1}c^{k_4}|z|^{2k_1}\right)+\\
&\sum_{\substack{N(b)-ac=\D \\c<0<a}}\left(\sum_{k_1+k_4=1}\binom{3}{k_1,1,1,k_4}a^{k_1}|b|^2c^{k_4}|z|^{2k_1+2}\right)\\
&=\sum_{\substack{N(b)-ac=\D \\c<0<a}}\left(a^{3}|z|^6+\left(6a|b|^2+3a^2c\right)|z|^4+\left(6c|b|^2+3ac^2\right)|z|^2+c^3\right)\\
&=\alpha_{3,\D}|z|^{6}+\alpha_{3,\D}'|z|^{4}+\alpha_{3,\D}''|z|^{2}-\alpha_{3,\D} 
\end{align*}
for some $\alpha_{3,\D}', \alpha_{3,\D}'' \in \mathbb{Z}$ depending on $\D$. 

Because $P_{3,\D}|(S+\1)=0$, we see that $\alpha_{3,\D}'= -\alpha_{3,\D}''$ which gives $$P_{3,\D}(z,\z)=\alpha_{3,\D}|z|^{6}+\alpha_{3,\D}'|z|^{4}-\alpha_{3,\D}'|z|^{2}-\alpha_{3,\D}.$$ By part (4) of Proposition \ref{PPolyforZ[i]prop} we have
\begin{equation}\label{P|T-U}
P_{3,\D}(z+1,\z+1)-P_{3,\D}(z,\z)=|z|^{6}P_{3,\D}\left(\frac{z+1}{z},\frac{\z+1}{\z}\right).
\end{equation} 
A direct calculation shows that  $|z|^6-1$ satisfies the equation  (\ref{P|T-U}) but that $|z|^4-|z|^2$ does not. Thus, the middle terms must vanish (i.e. $\alpha_{3,\D}'=0$), and we get $$P_{3,\D}(z)=\alpha_{3,\D}|z|^6-\alpha_{3,\D}=\alpha_{3,\D}(|z|^6-1)$$ which proves the first assertion for $d=1,3$.
	
It remains to prove the first assertion when $d=7$. Since $P_{3,\D}(-z,-\z)=P_{3,\D}(z,\z)$, we know that $k_2+k_3=0,2$	which implies that $(k_2,k_3)=(0,0),(0,2),(2,0),(1,1)$. Using these values for $k_2$ and $k_3$ in the equation (\ref{expandedform}) gives 
\begin{equation*}
P_{3,\D}(z,\z)=\alpha_{3,\D}|z|^6+\delta_1|z|^4+\delta_2|z|^2+\delta_3|z|^2\z^2+\overbar{\delta_3}|z|^2z^2+\delta_4z^2+\overbar{\delta_4}\z^2+\alpha_{3,\D}
\end{equation*}
where $\delta_1,\delta_2\in \mathbb{Z}$ and $\delta_3, \delta_4 \in \OO_{7}.$ Because $P_{3,\D}|(S+\1)=0$, we see that $\delta_1=-\delta_2, \  \delta_3=-\delta_4 $, and thus 
\begin{equation*}
P_{3,\D}(z,\z)=\alpha_{3,\D}(|z|^6-1)+\delta_1(|z|^4-|z|^2)+\delta_3(|z|^2\z^2-z^2)+\overbar{\delta_3}(|z|^2z^2-\z^2).
\end{equation*}
Now, let $$F(z):=P_{3,\D}(z+1,\z+1)-P_{3,\D}(z,\z)-|z|^{6}P_{3,\D}\left(\frac{z+1}{z},\frac{\z+1}{\z}\right)$$ and by Proposition \ref{PPolyforZ[i]prop} we know that $F(z)=0$ for any $z\in \mathbb{C}$. Since $F(2)=-120\delta_1-240\delta_3$ we get that $\delta_1=-2\delta_3$, which also implies that $\delta_3=\overbar{\delta_3}$. So, 
\begin{equation*}
P_{3,\D}(z,\z)=\alpha_{3,\D}(|z|^6-1)-2\delta_3(|z|^4-|z|^2)+\delta_3(|z|^2\z^2+|z|^2z^2-z^2-\z^2).
\end{equation*}
Now, we let 
\begin{align*}
G(z):=&P_{3,\D}(z,\z)-P_{3,\D}(z+\omega,\z+\bar{\omega})-|\om z+1|^{6}P_{3,\D}\left(\frac{z}{\om z+1},\frac{\z}{\omega\z+1}\right)\\
-&|z+\omega|^{6}P_{3,\D}\left(\frac{\om z+1}{z+\omega},\frac{\omega\z+1}{\z+\om}\right),
\end{align*}
and again by Proposition \ref{PPolyforZ[i]prop} part (5) we know that $G(z)=0$ for any $z\in \mathbb{C}$. Finally, since $G(2)=294\delta_3$ we see that $\delta_3=0$ and thus $$P_{3,\D}(z,\z)=\alpha_{3,\D}(|z|^6-1)$$ which completes the proof of the first assertion. 

As for the second claim, we know that 
\begin{equation*}
P_{5,\D}(z,\z)=\sum_{k_1+k_2+k_3+k_4=5}\binom{5}{k_1,k_2,k_3,k_4}\left(\sum_{\substack{N(b)-ac=\D \\c<0<a}}a^{k_1}b^{k_2}\b^{k_3}c^{k_4}\right)|z|^{2k_1}z^{k_2}\z^{k_3}
\end{equation*}
where $2|(k_{2}+k_{3})$ and $k_2+2k_3\equiv 0$ (mod 3). Thus, 
\begin{equation*}
P_{5,\D}(z,\z)=\alpha_{5,\D}|z|^{10}+\delta_1|z|^8+\delta_2|z|^6+\delta_3|z|^4+\delta_4|z|^2-\alpha_{5,\D}
\end{equation*}
where $\delta_1,\delta_2,\delta_3,\delta_4 \in \mathbb{Z}.$ Since $P_{5,\D}|(S+\1)=0$, we have that  $\delta_1=-\delta_4$ and $\delta_2=-\delta_3 $. So, 
\begin{equation*}
P_{5,\D}(z,\z)=\alpha_{5,\D}(|z|^{10}-1)+\delta_1(|z|^8-|z|^2)+\delta_2(|z|^6-|z|^4).
\end{equation*}
Finally, we again let $$F(z):=P_{5,\D}(z+1,\z+1)-P_{5,\D}(z,\z)-|z|^{10}P_{5,\D}\left(\frac{z+1}{z},\frac{\z+1}{\z}\right)$$ and by Proposition \ref{PPolyforZ[i]prop} we know that $F(z)=0$ for any $z\in \mathbb{C}$. Evaluating $F$ at $z=2$ and $z=2i$ gives the system of equations 
\begin{align*}
-17640\delta_1-5880\delta_2&=0\\
-852\delta_1-348\delta_2&=0 
\end{align*} which has $\delta_1=\delta_2=0$ as solutions. Thus,
 $$P_{5,\D}(z,\z)=\alpha_{5,\D}(|z|^{10}-1)$$ which concludes the proof. 
\end{proof}

\begin{thm}\label{H4d=w4d} Suppose that  $H_{3,\D}$ and $H_{5,\D}$ are continuous functions. Then,	
\begin{enumerate}
	\item $H_{3,\D}(z)$  has a constant value $\alpha_{3,\D}$ for all $z \in \mathbb{C}$ when $d=1,3,7$.\\
	\item $H_{5,\D}(z)$ has a constant value $\alpha_{5,\D}$ for all $z \in \mathbb{C}$ when $d=3$.\\
\end{enumerate}		
\end{thm}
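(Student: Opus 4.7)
The plan is to mirror the argument already used for Theorem \ref{H2d=w2d}, since Lemma \ref{S-actk=4} has done the substantive work of pinning down the shape of $P_{k,\D}$ for the admissible pairs $(k,d)\in\{(3,1),(3,3),(3,7),(5,3)\}$. Fix one such pair, and introduce the auxiliary function
$$\Theta_{k,\D}(z) := H_{k,\D}(z) - \alpha_{k,\D}.$$
The goal is to show $\Theta_{k,\D}\equiv 0$ on $\mathbb{C}$.

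First I would record the two functional equations that $\Theta_{k,\D}$ inherits. Proposition \ref{H-properties}(3) gives the translation invariance $\Theta_{k,\D}(z+\lambda)=\Theta_{k,\D}(z)$ for every $\lambda\in\OO_d$. Next, combining Proposition \ref{S-act} with Lemma \ref{S-actk=4} yields
$$|z|^{2k}H_{k,\D}(1/z)-H_{k,\D}(z) \;=\; P_{k,\D}(z,\z) \;=\; \alpha_{k,\D}\bigl(|z|^{2k}-1\bigr),$$
which rearranges to $|z|^{2k}\Theta_{k,\D}(1/z)=\Theta_{k,\D}(z)$, exactly the relation exploited in the $k=1$ argument. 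Here one uses Proposition \ref{H-properties}(2) with the unit $-1$ to replace $H_{k,\D}(-1/z)$ by $H_{k,\D}(1/z)$ when computing the $S$-action.

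Next, for any $z\in K^\times$ I would invoke the Euclidean algorithm in $\OO_d$, which is available precisely because $d\in\{1,2,3,7,11\}$, to reduce $z$ to $0$ via finitely many applications of $z\mapsto z+\lambda$ and $z\mapsto 1/z$. Each translation leaves $\Theta_{k,\D}$ invariant, and each inversion multiplies it by a positive real factor. Tracing the reduction backwards from $\Theta_{k,\D}(0)=0$ (which holds by construction) forces $\Theta_{k,\D}(z)=0$ for every $z\in K$. Finally, the assumed continuity of $H_{k,\D}$ (hence of $\Theta_{k,\D}$), together with the density of $K$ in $\mathbb{C}$, upgrades this identity to all of $\mathbb{C}$.

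The real technical content already sits in Lemma \ref{S-actk=4}, where the clean form $P_{k,\D}=\alpha_{k,\D}(|z|^{2k}-1)$ was extracted from the unit-invariance and the $(S+\1)$- and $(\1+TS\e-T)$-relations (and, for $d=7$, the extra relation from part (5) of Proposition \ref{PPolyforZ[i]prop}). Once that cancellation of intermediate coefficients is in hand, the passage from $K$ to $\mathbb{C}$ via Euclidean reduction plus continuity is essentially a transcription of the proof of Theorem \ref{H2d=w2d}; no further obstacle should arise, and the argument is uniform across the three admissible choices of $d$ in the $k=3$ case and the single admissible choice in the $k=5$ case.
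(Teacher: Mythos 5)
Your proposal is correct and follows essentially the same route as the paper: the paper's own proof simply sets $\Theta_{k,\D}(z):=H_{k,\D}(z)-\alpha_{k,\D}$ and appeals verbatim to the argument of Theorem \ref{H2d=w2d}, with Lemma \ref{S-actk=4} supplying the needed identity $P_{k,\D}=\alpha_{k,\D}(|z|^{2k}-1)$. Your write-up merely spells out the two functional equations and the Euclidean reduction that the paper leaves implicit.
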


\begin{proof}
	Let $\Theta_{3,\D}(z):=\Theta_{3,\D}(z) -\alpha_{3,\D}$. Then, the same argument as in the proof of Theorem \ref{H2d=w2d} shows that $\Theta_{3,\D}(z)=\Theta_{3,\D}(0)=0$ for all $z \in K$ and thus $H_{3,\D}(z) =\alpha_{3,\D}$ for all $z \in K$. Since $H_{3,\D}(z)$ is continuous, we conclude that $H_{3,\D}(z) =\alpha_{3,\D}$ for all $z \in \mathbb{C}$. The proof of the second claim is exactly the same.
	
\end{proof}


\section{Continuity of $\H$} \label{cont}

Proving that $\H$ is a continuous function when $k\geq 3$ is straightforward since in that case the sum defining $\H$ converges absolutely and uniformly as we show below. The argument we use to show the continuity for $k\geq 3$ does not apply for $k=1$, since the bound we use to show the convergence and uniform continuity is not sharp enough. Since the summands that appear in the sum defining $H_{1,\D}$ depend on $z$, to show the continuity of $H_{1,\D}$ a more explicit description of these summands in terms of $z$ is needed. The same issue also arises when trying to prove that $F_{2,D}$ is continuous. Zagier gives an argument for the convergence of $F_{2,D}$ and mentions that the continuity of $F_{2,D}$ can be proved in an elementary way, but he does not provide an argument. However, based on computer experiments he observes that for a given $x$ there is in fact a strong relation between the summands that appear in the sum for $F_{2,D}(x)$ and the continued fraction expansion of $x$. More specifically, he observes that the summands which appear in the sum defining $F_{2,D}(x)$ for a given $x$ belong to a union of two lists whose elements tend to zero exponentially quickly, and each summand in each list is obtained from the previous one by applying an element of $\textbf{SL}(2,\mathbb{Z})$ coming from the continued fraction expansion of $x$.  This observation was shown to be true by Bengoechea in \cite{Be}, and as a result of this she obtains a direct proof of the continuity of $F_{2,D}$ by showing that $F_{2,D}$ converges exponentially quickly. To prove the continuity of $H_{1,\D}$ we use Bengoechea's methods but the difficulty with that approach is finding a continued fraction algorithm for complex numbers that is similar to the classical continued fraction (i.e., nearest integer continued fraction) expansion of real numbers which Bengoechea uses in her paper. Such an algorithm was developed by Hurwitz \cite{Hur} and has been studied recently by a number of different people (see \cite{Wie,DN,DN2,DN3,Hin}, and other references cited therein).

\subsection{Continuity of $\H$ for $k\geq 3$ } 

\begin{thm}\label{contk>4}
	For $k \geq 3$, $\H$ is a continuous function.
\end{thm}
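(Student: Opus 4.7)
The plan is to bound each summand $\max(0,h(z,1)^k)$ by $O(1/|a_h|^k)$ and to show that, for each fixed $z$, only a uniformly bounded number of summands with a given value of $a_h$ can be nonzero. The defining series is then dominated by a constant multiple of $\sum_{a\geq 1} a^{-k}$, which converges for every $k \geq 2$. Since $k \geq 3$, this will give absolute pointwise convergence together with a tail bound uniform in $z$, and continuity will follow. Because $k \geq 3$ is comfortably above the convergence threshold, no real obstacle arises in this range.

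Completing the square in $h(z,1) = a|z|^2 + bz + \b\z + c$ using $N(b) - ac = \D$ yields
\[h(z,1) \;=\; a\bigl|z + \tfrac{\b}{a}\bigr|^2 - \tfrac{\D}{a} \;=\; \tfrac{\D}{|a|} - |a|\bigl|z + \tfrac{\b}{a}\bigr|^2,\]
the second equality using $a < 0$. Two consequences follow: (i) whenever $h(z,1) > 0$, we have $0 < h(z,1) \leq \D/|a|$; and (ii) $h(z,1) > 0$ forces $|az + \b| < \sqrt{\D}$, so $\b$ lies in an open disk of radius $\sqrt{\D}$ about $-az$. Because $\OO_d \subset \CC$ is a lattice of fixed covolume, (ii) implies that for each $z$ and each $a$ the number of $b \in \OO_d$ producing a nonzero summand is bounded by a constant $C = C(\D, d)$ independent of $a$ and $z$; the third entry $c = (N(b)-\D)/a$ is then determined.

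Combining (i) and (ii), for every $z \in \CC$ and every integer $N \geq 1$,
\[\sum_{\substack{h \in \mathcal{H}(\OO_d, -\D) \\ a_h < 0,\; |a_h| > N}} \max\!\bigl(0, h(z,1)^k\bigr) \;\leq\; C \sum_{|a|>N} \Bigl(\tfrac{\D}{|a|}\Bigr)^k \;\ll_{k,\D,d}\; N^{-(k-1)},\]
which for $k \geq 3$ tends to $0$ as $N \to \infty$ with a bound wholly independent of $z$. Hence $\H(z)$ converges absolutely everywhere and its tails are small uniformly in $z$.

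To conclude continuity at an arbitrary $z_0 \in \CC$, fix $\e > 0$ and choose $N$ so that the tail above is below $\e/3$ for every $z$. For the partial sum $S_N(z) := \sum_{|a_h| \leq N} \max(0,h(z,1)^k)$, restrict $z$ to a bounded neighborhood $U$ of $z_0$: the constraint $|az + \b| < \sqrt{\D}$ for some $z \in U$ and some $a$ with $|a| \leq N$ confines the pair $(a,b)$ to a finite set, so $S_N$ is a finite sum of continuous functions $\max(0, h(z,1)^k)$ on $U$. Therefore $S_N$ is continuous at $z_0$, and shrinking $U$ gives $|S_N(z) - S_N(z_0)| < \e/3$; the triangle inequality then yields $|\H(z) - \H(z_0)| < \e$. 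The only subtle point worth noting is that although the number of nonzero summands with $a_h = a$ is $O(1)$ \emph{pointwise in} $z$, it grows like $|a|^2$ if one demands uniform nonvanishing over a whole neighborhood of $z_0$; this is exactly why the argument factors as a uniform tail bound plus a finite-support partial sum rather than a single Weierstrass $M$-test, and it is also why the $k = 1$ case needs an entirely different idea.
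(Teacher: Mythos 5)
Your proof is correct and follows essentially the same route as the paper's: the same completed-square identity giving the bound $h(z,1)\leq \D/|a|$ and the confinement of $\b$ to a disk of radius $\sqrt{\D}$ about $-az$, the same uniform-in-$z$ count of admissible $b$ for each $a$, and the same comparison with $\sum_{a}a^{-k}$. You are somewhat more explicit than the paper about why the $z$-dependence of the set of nonzero summands forces one to group terms by $a$ (or, equivalently, to argue via a uniform tail plus a locally finite partial sum) rather than applying a single $M$-test over all forms $h$ at once, but this is a presentational refinement of the same argument rather than a different proof.
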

\begin{proof}
	We can write $\H$ as:
	\begin{equation*}
	H_{k,\D}(z)=\sum_{a<0}\left(\sum_{N(b)-ac=\D}\max \left(0,(a|z|^2+bz+\b\z+c)^{k}\right)\right).
	\end{equation*}
The identity $\D=N(\b+az)+|a|(a|z|^2+bz+\b\z+c),$ and the inequalities $a<0<a|z|^2+bz+\b\z+c$ together imply that $N(\b+az)<\sqrt{\D}$. So for each $z\in \mathbb{C}$, $\b$ is an algebraic integer belonging to a disk of radius $\sqrt{\D}$ centered at $-2az$. On the other hand the number of algebraic integers in a disk of fixed radius is finite. Thus, for each $a$ the number of $b$'s satisfying the conditions of the sum is finite and independent of $z$ which shows that for each $a$ the number functions appearing in the inner sum is finite and independent of $z$. Also, for each $a$ the functions appearing in the inner sum are bounded by $\frac{\D}{|a|}$. So, for each $a$ and all $z$ the inner sum is the sum of finitely many continuous functions $h(z,1)^{k}=(a|z|^2+bz+\b\z+c)^{k}$ that are bounded by $\left(\frac{\D}{|a|}\right)^{k}$. Hence by the Weierstrass M-test, the sum in the definition of $H_{k,\D}$ converges absolutely and uniformly and is a continuous function for $k\geq 3$.

\end{proof}

\subsection{Nearest Integer Continued Fraction Algorithm for the Euclidean Imaginary Quadratic Fields.\\}

Let $z$ be a complex number. We denote by $\lfloor z \rceil $ the integer in $\OO_d$ that is nearest to $z$ with respect to Euclidean distance in complex plane, rounding down, in both the real and the imaginary components to break ties. The nearest integer continued fraction algorithm for the Euclidean imaginary quadratic fields proceeds by steps of the form
\begin{equation}\label{H-algorithm}
z_{0}=z, \quad \alpha_{n}=\lfloor z_{n} \rceil, \quad z_{n+1}=\frac{1}{z_{n}-\alpha_{n}} \quad 
\end{equation} 
where $\alpha_{n}=a_{n}+b_{n}\omega \in \OO_d$ and $n \geq 0$. If $z \in K$ the algorithm terminates when, as must eventually occur, $z_{n}=0$. If initially $z \notin K$ then the algorithm continues indefinitely. Once the partial quotient $\alpha_{n}$'s have been computed, the partial convergents are computed using the usual formula from the real case:
\begin{equation}\label{p-convergent}
p_{-2}=0, \quad p_{-1}=1, \quad p_{n}=\alpha_{n}p_{n-1}+p_{n-2}
\end{equation}
\begin{equation}\label{q-convergent}
q_{-2}=1, \quad q_{-1}=0, \quad q_{n}=\alpha_{n}q_{n-1}+q_{n-2}.
\end{equation}
These numbers satisfy the equation 
\begin{equation*}
p_{n+1}q_{n}-p_{n}q_{n+1}=(-1)^{n},
\end{equation*}
and $\displaystyle\lim_{n \to \infty}\frac{p_n}{q_n}=z$. 

Similar to the real case, let us also define the numbers $\delta_{n}$ associated to continued fraction expansion of $z$. The numbers $\delta_{n}$ ($n \geq -1$) are defined inductively by  
\begin{equation}\label{deltas-Hurwitz}
\delta_{-1}=z, \quad \delta_0=1, \quad \delta_{n+1}=\delta_{n-1}-\alpha_n\delta_{n},
\end{equation} and satisfy 
\begin{equation}
\frac{\delta_{n}}{\delta_{n-1}}=\frac{1}{z_n}=z_{n-1}-\lfloor z_{n-1} \rceil.
\end{equation}
If $z \in K$, then $z_{}=\displaystyle\frac{p_n}{q_n}$ for some $n$ and the recurrence stops with $\delta_{n+1}=0$. If $z \notin K$, then $\delta_{n}$'s decay to zero exponentially quickly in $n$.

Finally, let us define the set of matrices associated to the continued fraction of $z$
\begin{gather*}
\hat{\Gamma}_d(z):=\left\{\gamma_n \in \hat{\Gamma}_d \,:\, \gamma_{n}=\left(\begin{array}{cc}
q_{n-2} & -p_{n-2} \\ 
-q_{n-1} & p_{n-1}
\end{array}\right) \right\}
\end{gather*} where $\hat{\Gamma}_d=\textbf{PGL}(2,\OO_d)$.

\subsection{Continuity of $H_{1,\D}$}
For a fixed $z \in \mathbb{C}$  denote the set of functions appearing in the sum for $H_{1,\D}(z)$ by 
$$ \Omega_{\D}(z)=\left\{h \in \mathcal{H}(\mathcal{O}_d,-\D)\ \big| \ a_h<0<h(z,1) \right\}.$$ 
Let $\mathcal{A}$ be an equivalence class in $\mathcal{H}(\mathcal{O}_d,-\D)/\hat{\Gamma}_d$, and define 
\begin{align}
	\mathcal{A}(z)&:=\{h\in \mathcal{A}\ \big|\ a_h<0<h(z,1)\},\\
	\hat{\mathcal{A}}&:=\{h\in \mathcal{A}\ \big|\ h(0,1)<0<a_h\}.
\end{align}
Then, the sum defining $H_{1,\D}$ can be written as 
\begin{equation}\label{myfunc2}
H_{1,\D}(z)=\sum_{\mathcal{A}\in \mathcal{H}(\mathcal{O}_d,-\D)/\hat{\Gamma}_d}\left(\sum_{h \in \mathcal{A}(z)} h(z,1)\right).
\end{equation}
The next proposition gives a description of $\mathcal{A}(z)$, which is enough to prove the continuity of $H_{1,\D}$.
\begin{prop}\label{surjection}
Let $z \in \mathbb{C}$ be fixed, $\mathcal{A}$ be an equivalence class in $\mathcal{H}(\mathcal{O}_d,-\D)$ and
\begin{gather*}
\mathcal{B}(z)=\left\{(f,\gamma)\in \hat{\mathcal{A}}\times \hat{\Gamma}_d(z)\ |\ f(\gamma(\infty),1)<0<f(\gamma(z),1) \right\},
\end{gather*} where for $\gamma=\begin{pmatrix}
r&s\\t&v
\end{pmatrix}$ we let	
\begin{gather*}
\gamma (z):= \frac{rz+s}{tz+v} \quad \text{and} \quad \gamma (\infty):=\lim_{z \to \infty}\gamma{(z)}=\frac{r}{t}.
\end{gather*}	
	 Then, the map $\Phi:\mathcal{B}(z)\rightarrow \mathcal{A}(z)$ defined by $\Phi(f,\gamma)=\overbar{\gamma}(f)$ is a surjection.
\end{prop}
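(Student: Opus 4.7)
The plan is, given $h \in \mathcal{A}(z)$, to construct a preimage under $\Phi$ by choosing $\gamma_n$ from the nearest-integer continued fraction expansion of $z$ and setting $f := (\overbar{\gamma_n})^{-1}(h)$. The first step is to unwind what the action of $\overbar{\gamma_n}$ does to evaluations of the Hermitian form: from $h = \overbar{\gamma_n}(f) = \gamma_n^{t} f\, \overbar{\gamma_n}$ one checks that $h(\xi) = f(\gamma_n \xi)$ for every column vector $\xi$. Specializing to $\xi = (z,1)^{t}$ and $\xi = (1,0)^{t}$, and using the standard identities $\gamma_n(z) = z_n$ and $\gamma_n(\infty) = -q_{n-2}/q_{n-1}$, this yields
\begin{align*}
h(z,1) &= |p_{n-1} - q_{n-1}z|^{2}\, f(z_n, 1), &
a_h &= |q_{n-1}|^{2}\, f(\gamma_n(\infty), 1),\\
a_f &= |q_{n-1}|^{2}\, h(p_{n-1}/q_{n-1}, 1), &
c_f &= |q_{n-2}|^{2}\, h(p_{n-2}/q_{n-2}, 1).
\end{align*}
The first row makes two of the defining inequalities for $\mathcal{B}(z)$ automatic from $h \in \mathcal{A}(z)$: $f(\gamma_n(\infty), 1) < 0$ iff $a_h < 0$, and $f(\gamma_n(z), 1) > 0$ iff $h(z,1) > 0$. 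So the only non-automatic requirement, $f \in \hat{\mathcal{A}}$ (i.e.\ $c_f < 0 < a_f$), translates via the second row to the sign condition
\[
h(p_{n-2}/q_{n-2}, 1) \;<\; 0 \;<\; h(p_{n-1}/q_{n-1}, 1)
\]
on two consecutive convergents.

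The second step is a sign-change argument on the sequence $\{h(p_n, q_n)\}_{n \geq -1}$, noting $h(p_n, q_n) = |q_n|^{2}\, h(p_n/q_n, 1)$ for $n \geq 0$ and $h(p_{-1}, q_{-1}) = h(1, 0) = a_h$. This sequence starts negative since $a_h < 0$. On the other hand, the convergents $p_n/q_n$ produced by the nearest-integer continued fraction algorithm converge to $z$ in $\mathbb{C}$, and the continuity of $h(\cdot, 1)$ together with $h(z,1) > 0$ then gives $h(p_n, q_n) > 0$ for all sufficiently large $n$. Moreover, the hypothesis that $\D$ is not a norm of an algebraic integer in $\OO_d$ forces $h(\alpha, \beta) \neq 0$ for every $(\alpha, \beta) \in \OO_d^{2}\setminus\{0\}$: completing the square in $h$ yields $a_h\cdot h(\alpha,\beta) = |a_h\alpha + \b_h \beta|^{2} - \D |\beta|^{2}$, so $h(\alpha, \beta) = 0$ with $\beta \neq 0$ would display $\D$ as a norm from $K^{\ast}$, which for positive integers coincides with being a norm from $\OO_d$, contradicting the hypothesis. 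Consequently there is a smallest $m \geq 0$ with $h(p_m, q_m) > 0$, and automatically $h(p_{m-1}, q_{m-1}) < 0$. Setting $n := m+1$ and $f := (\overbar{\gamma_n})^{-1}(h)$ gives $(f, \gamma_n) \in \mathcal{B}(z)$ and $\Phi(f, \gamma_n) = h$, establishing surjectivity.

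The main point requiring care is the non-vanishing claim $h(\alpha, \beta) \neq 0$ on $\OO_d^{2}\setminus\{0\}$, which hinges on the (standard, but necessary) equivalence for imaginary quadratic $K$ that a positive integer is a norm from $K^{\ast}$ precisely when it is a norm from $\OO_d$. A routine secondary check is the case $z \in K$ where the continued fraction terminates: the sequence of convergents then ends at $p_N/q_N = z$ with $h(p_N, q_N) = |q_N|^{2}\, h(z,1) > 0$, so the same sign-change argument still produces the required $m$ and $n = m+1$.
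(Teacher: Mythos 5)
Your proof is correct and follows essentially the same route as the paper: both locate a sign change of $h$ along the nearest-integer convergents of $z$ and pull $h$ back by the corresponding $\gamma_n$ to produce an element of $\hat{\mathcal{A}}$. The differences are presentational — the paper phrases the sign change via the open disk bounded by the zero circle of $h(\cdot,1)$ and takes the last entry of the convergents into that disk, whereas you take the first sign change of $h(p_n,q_n)$ and make explicit both the non-vanishing of $h$ on $\OO_d^2\setminus\{0\}$ (via the hypothesis that $\D$ is not a norm) and the terminating case $z\in K$, two points the paper leaves implicit.
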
 

\begin{proof}
	First we check that the map is well defined. Let $$f= \begin{pmatrix}
	a&b\\ \b&c
	\end{pmatrix}\in \hat{\mathcal{A}} {\ \  \text{and}\ \ }   \gamma=\begin{pmatrix}
	r&s\\t&v
	\end{pmatrix}\in \hat{\Gamma}_d(z).$$ Then 
	$$\overbar{\gamma}(f)=\gamma^t f \overbar{\gamma}=\begin{pmatrix}
	r&t\\s&v
	\end{pmatrix}\begin{pmatrix}
	a&b\\ \b&c
	\end{pmatrix}\begin{pmatrix}
	\bar{r}&\bar{s}\\\bar{t}&\bar{v}
	\end{pmatrix},$$
	 and we see that $$a_{\overbar{\gamma}(f)} =a|r|^2+br\bar{t}+\b\bar{r}t+c|t|^2,$$
	which is equal to $$|t|^2f\left(\frac{r}{t},1\right)=|t|^2f(\gamma(\infty),1).$$ But by assumption $f(\gamma(\infty),1)<0$, thus $a_{\overbar{\gamma}(f)}<0.$ Also, $f(\gamma(z),1)>0$ implies that $\overbar{\gamma}(f)(z,1)>0$ since $\displaystyle \overbar{\gamma}(f)(z,1)=|tz+v|^2f(\gamma(z),1)$. So, $\Phi$ is well defined.\\

\noindent To show that $\Phi$ is surjective; let $h \in \mathcal{A}(z)$ and consider the identity $$-\D(h)=N(\b_h+a_hZ)+|a_h|h(Z,1)$$ which holds for any $Z \in \mathbb{C}$. Using this identity we see that the set of zeros of $h(Z,1)$ satisfy the equation 
\begin{equation}\label{zeroset}
	\D=N(\b_h+a_hZ)=a_h^2\left|Z+\frac{\b_h}{a_h}\right|^2,
\end{equation} which describes a circle in the complex plane centered at $-\frac{\b_h}{a_h}$ with radius $\frac{\sqrt{\D}}{|a_h|}.$\\
Let $B$ be the open disk centered at $-\frac{\b_h}{a_h}$ with radius $\frac{\sqrt{\D}}{|a_h|}$. From equation (\ref{zeroset}) it is clear that $N(\b_h+a_hZ)<\D$ if $Z \in B$ and $N(\b_h+a_hZ)>\D$ if $Z \notin B$. Hence $h(Z,1)>0$ if $Z \in B$ and $h(Z,1)<0$ if $Z \notin B$ and in particular $z\in B$, since $h \in \mathcal{A}(z)$ implies that $h(z,1)>0$. Let $p_n$ and $q_n$ be the partial convergents of $z$ and because $\displaystyle\frac{p_n}{q_n} \rightarrow z$ as $n \rightarrow \infty$, we can find a positive integer $n$ large enough so that $\displaystyle\frac{p_{n-1}}{q_{n-1}} \notin B$ but $\displaystyle\frac{p_{n}}{q_{n}}, \frac{p_{n+1}}{q_{n+1}},\frac{p_{n+2}}{q_{n+2}}, \ldots \in B$. If no such $n$ exists, we set $n=0$. In both cases we have 
	\begin{equation*}
		h\left(\frac{p_{n-1}}{q_{n-1}},1\right)<0<h\left(\frac{p_{n}}{q_{n}},1\right)
	\end{equation*}
where we observe that $h(\infty,1)<0$ when $n=0$, since $a_h<0$.
	Let $$\gamma=\gamma_{n+1}=\left(\begin{array}{cc}
	q_{n-1} & -p_{n-1} \\ 
	-q_{n} & p_{n}
	\end{array}\right) \in \Gamma_d(z) \implies \gamma^{-1}= \left(\begin{array}{cc}
	p_{n} & p_{n-1} \\ 
	q_{n} & q_{n-1}
	\end{array}\right),$$
	and $$ f=\overline{\gamma^{-1}}(h)=\begin{pmatrix}
	p_n&q_n\\p_{n-1}&q_{n-1}
	\end{pmatrix}\begin{pmatrix}
	a&b\\ \b&c
	\end{pmatrix}\begin{pmatrix}
	\bar{p}_n&\bar{p}_{n-1}\\\bar{q}_n&\bar{q}_{n-1}
	\end{pmatrix} .$$ Then, $\displaystyle f(0,1)=|q_{n-1}|^2h\left(\frac{p_{n-1}}{q_{n-1}},1\right)<0$ and $\displaystyle a_f=|q_n|^2h\left(\frac{p_{n}}{q_{n}},1\right)>0$. Thus $f \in \hat{\mathcal{A}}$. Moreover $\overbar{\gamma}(f)=h$ which completes the proof of surjectivity.
\end{proof}

\begin{thm}
	$H_{1,\D}$ is a continuous function for all $z\in \mathbb{C}$. 
\end{thm}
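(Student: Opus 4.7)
The plan is to deduce continuity of $H_{1,\D}$ from uniform convergence of its defining series on compact subsets of $\mathbb{C}$; since each summand $\max(0,h(z,1))$ is continuous in $z$, this suffices. The strategy, inspired by Bengoechea's treatment of $F_{2,D}$ in \cite{Be}, is to reparametrize the sum using the nearest-integer continued fraction data of $z$ from Section \ref{cont} and to exploit the exponential decay of the denominators $\delta_n$ defined in (\ref{deltas-Hurwitz}).

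First I would use Proposition \ref{surjection} together with the observation that the recipe in its proof --- picking the unique $n$ such that $p_{n-1}/q_{n-1}$ lies outside but $p_n/q_n$ inside the disk of positivity of $h$ --- in fact makes the surjection $\Phi$ a bijection, to write
\[ H_{1,\D}(z) = \sum_{\mathcal{A}} \sum_{f \in \hat{\mathcal{A}}} \sum_{\substack{n\ge 0 \\ (f,\gamma_n)\in \mathcal{B}(z)}} \overline{\gamma_n}(f)(z,1), \]
where $\mathcal{A}$ ranges over the finitely many $\hat{\Gamma}_d$-equivalence classes in $\mathcal{H}(\OO_d,-\D)$. The identities $p_{n-1} - q_{n-1}z = (-1)^n \delta_n$ and $\gamma_n(z) = \delta_{n-1}/\delta_n = z_n$, both direct consequences of the recurrences (\ref{p-convergent})--(\ref{deltas-Hurwitz}), then evaluate each summand as
\[ \overline{\gamma_n}(f)(z,1) = |\delta_n|^2 f(z_n,1) = a_f|\delta_{n-1}|^2 + b_f\delta_{n-1}\overline{\delta_n} + \bar{b}_f\,\overline{\delta_{n-1}}\delta_n + c_f|\delta_n|^2, \]
whose absolute value is bounded by $C_f\bigl(|\delta_{n-1}|^2 + |\delta_n|^2\bigr)$ for a constant $C_f$ depending only on $f$.

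Next comes the exponential-decay step. For each $d \in \{1,2,3,7,11\}$ Euclideanness of $\OO_d$ is equivalent to its covering radius $\mu_d$ being strictly less than $1$, so the rounding $\alpha_n = \lfloor z_n \rceil$ satisfies $|z_n - \alpha_n| \le \mu_d$; combined with $\delta_{n+1}/\delta_n = z_n - \alpha_n$ this forces the uniform bound $|\delta_n| \le \mu_d^{\,n}$ for every $z \in \mathbb{C}$. Summing the resulting geometric series over $n > N$ and then over the finite index sets $\mathcal{A}$ and $f\in\hat{\mathcal{A}}$ yields the uniform tail estimate
\[ \bigl|H_{1,\D}(z) - T_N(z)\bigr| \ll \mu_d^{\,2N}, \qquad z \in \mathbb{C}, \]
where $T_N(z)$ denotes the partial sum over pairs $(f,\gamma_n)$ with $n \le N$.

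Finally, to extract continuity at a given $z_0 \in \mathbb{C}$ I would combine this uniform tail bound with continuity of the CF data. Off the lattice $\OO_d$ and off the (real codimension-one) set where the nearest-integer rounding is ambiguous at some step $\le N$, the partial quotients $\alpha_0,\ldots,\alpha_{N-1}$ are locally constant on a neighborhood of $z_0$, so $T_N$ is a finite sum of continuous functions there, and continuity at $z_0$ follows from the uniform tail estimate by letting $N\to\infty$. At a rounding-ambiguity point the two tie-breaking choices yield two CF expansions whose associated $T_N$'s differ only by $O(\mu_d^{2N})$ (since both equal $H_{1,\D}(z_0)$ up to the tail), and approaching $z_0$ from either side the corresponding $T_N$ is continuous; this gives continuity through the break. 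The case $z_0 \in \OO_d$ reduces to $z_0 = 0$ via the $\OO_d$-invariance of Proposition \ref{H-properties}, where only finitely many $h$'s (those with $c_h > 0$) contribute and continuity is immediate. I expect the main obstacle to be this last step --- transferring the uniform CF-indexed tail bound to the $h$-indexed series across the discontinuity locus of the CF algorithm --- since steps one through three are essentially routine calculation.
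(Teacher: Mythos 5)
Your proposal follows essentially the same route as the paper: rewrite the sum over each $\hat{\Gamma}_d$-equivalence class via Proposition \ref{surjection} as a sum indexed by the continued-fraction matrices $\gamma_n$, express each summand in terms of $\delta_{n-1},\delta_n$, and use the exponential decay of the $\delta_n$ to obtain a tail bound (the paper stops at ``converges exponentially quickly, hence continuous,'' whereas you supply additional bookkeeping at the discontinuity locus of the continued-fraction algorithm). One small caution: Proposition \ref{surjection} only asserts a surjection, not a bijection, so your displayed equality should be the inequality $H_{1,\Delta}(z)\le\sum_{\mathcal{A}}\sum_{f}\sum_{n}\overline{\gamma_n}(f)(z,1)$ as in the paper --- but since every summand on the right is positive this still dominates the tail of $H_{1,\Delta}$ and the argument is unaffected.
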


\begin{proof}
	From the equation (\ref{myfunc2}) we see that $H_{1,\D}(z)$ is the sum of the sums $\displaystyle\sum_{h \in \mathcal{A}(z)} h(z,1)$ over all the $\hat{\Gamma}_d$-equivalence classes in $\mathcal{H}(\mathcal{O}_d,-\D)$. By Proposition \ref{surjection}, we have that 
	\begin{equation}\label{sumoverequiv}
	\sum_{h \in \mathcal{A}(z)} h(z,1) \leq \sum_{f\in \mathcal{A}(0)}\sum_{\substack{\gamma \in \hat{\Gamma}_d(z) \\f(\gamma(\infty),1)<0\\f(\gamma(z),1)>0}}\overbar{\gamma}(f)(z,1).
	\end{equation}
	There are finitely many $f$ in $\mathcal{A}(0)$. So, the outer sum on the right side of inequality (\ref{sumoverequiv}) is finite. Furthermore, the summands in the inner sum are of the form $$\overbar{\gamma}_{n}f(z,1)= a_f|\bar{\delta}_{n-1}|^2+b_f\delta_{n-1}\bar{\delta}_{n}+\b_f \bar{\delta}_{n-1}\delta_{n}+c_f|\delta_n|^2$$ for some $n \geq 1$. The sequence $\delta_{n}=|p_{n-1}-q_{n-1}z|$ stops if $z \in K$ and decreases to 0 exponentially quickly in $n$  if $z \notin K$. So, the summands of the double sum on the right side of the equation (\ref{sumoverequiv}) converge to $0$ exponentially quickly in $n$, which implies that $H_{1,\D}(z)$ converges exponentially quickly in $n$ for any $z\in \mathbb{C}$. Thus $H_{1,\D}(z)$ is a continuous function for all $z \in \mathbb{C}$.
\end{proof}

\section{The average value of $\H$ and Cohen-Zagier type formulas}\label{averagevalue}

As we saw in Proposition \ref{H-properties} part (3), the function $\H$ is $\OO_d$-invariant, and so it has a well-defined average value which we denote by $\langle \H(z) \rangle_{av}$. We let $ \mathcal{P}$ denote the fundamental parallelogram for the lattice $\OO_d$ and $|\mathcal{P}|$ denote the Euclidean area enclosed by $\mathcal{P}$. Since $\left\{1,\omega=\frac{d_K+\sqrt{d_K}}{2}\right\}$ is a $\mathbb{Z}$-basis of $\OO_d$, we see that $|\mathcal{P}|=\frac{\sqrt{|d_K|}}{2}$. We also define the following subgroup of $\textbf{PSL}(2,\OO_d)$
$$\tilde{\Gamma}_d:=\left\{\begin{pmatrix}
1&\lambda\\0&1
\end{pmatrix}\ \bigg| \ \lambda \in \OO_d \right\}.$$ Notice that the set of binary Hermitian forms corresponding to Hermitian matrices in the orbit of $h\in \mathcal{H}(\mathcal{O}_d,-\D)$ under the action of $\tilde{\Gamma}_d$ is given by $$\tilde{\Gamma}_d(h)=\left\{h(z+\lambda,1)\ | \ \lambda \in \OO_d \right\}.$$ Finally, we define the following zeta function. For $\D, n \in \mathbb{Z} $, we set 
$$r(\D,n):=\#\{\beta \in \OO_d/n\OO_d \ |\ \beta\bar{\beta}+\D\equiv0\ (\text{mod}\ n)\}$$ where we denote by $\#S$ the cardinality of a set $S$ and define
$$Z(\D,s):=\sum_{n=1}^{\infty}\frac{r(\D,n)}{n^{s+1}}.$$ This zeta function is studied by Elstrodt, Grunewald and Mennicke \cite{EGM1,EGM2} in connection with representations numbers of binary Hermitian forms with coefficients in $\OO_d$. They show that 
\begin{equation}\label{EGM-zeta}
Z(\D,s)= \begin{cases} 
\zeta_{K}(s)L(\chi_{d_K},s+1)^{-1}&\text{if }  \D=0 \\
\theta(\D,s)\zeta_{\mathbb{Q}}(s)L(\chi_{d_K},s+1)^{-1} &\text{if }  \D\neq 0 \\
\end{cases}
\end{equation}
where $\zeta_{K}(s)$ denotes the usual zeta function of $K$ and $\theta(\D,s)$ is a finite Euler product given by 

\begin{equation}\label{eulerpro}
\theta(\D,s)=\prod_{p|d_k\D}R_p(\D,p^{-1-s})
\end{equation}
with

\begin{equation*}
R_p(\D,X)=\begin{cases}
\frac{1-\left(\left(\frac{d_K}{p}\right)(pX)\right)^{t+1}}{1-\left(\frac{d_K}{p}\right)pX}&\text{for }  p\nmid d_K,\ p^t\edv \D,\\
1+\left(\frac{-|D_0|^t\D_0}{p}\right)(pX)^{t+1}&\text{for }  p\mid d_K,\ p\neq 2,\ p^t\edv \D,\\
1+\left(\frac{8}{\D_0D^t_2}\right)(2X)^{t+3}&\text{for } p=2,\  4\mid d_K,\ D_{1}\equiv 2(8),\ 2^t\edv \D,\\
1-\left(\frac{-8}{\D_0D^t_2}\right)(2X)^{t+3}&\text{for } p=2,\  4\mid d_K,\ D_{1}\equiv 6(8),\ 2^t\edv \D,\\
1-\left(\frac{-4}{\D_0D^t_2}\right)(2X)^{t+2}&\text{for } p=2,\  4\mid d_K,\ D_{1}\equiv 3\ \text{or}\ 7(8),\ 2^t\edv \D,\\
\end{cases}
\end{equation*}
where $D_{0}:=d_K/p$, $\D_0:=p^{-t}\D$, and  where for $d_K\equiv0\pmod4$ 

$$D_{1}:=\frac{d_K}{4}, \quad \quad D_{2}:=\begin{cases}
-\frac{D_{1}}{2}&\text{if } D_{1}\equiv2\pmod4,\\
\frac{1-D_{1}}{2}&\text{if } D_{1}\equiv3\pmod4.
\end{cases}$$

\begin{thm}\label{avg} Let $k$ and $\D$ be as in Definition \ref{myfuncdef}. Then, the average value of $\H$ is given by
\begin{equation}
\langle \H(z) \rangle_{av}=\frac{2\pi\D^{k+1}}{(k+1)\sqrt{|d_K|}}\,Z(-\D,k+1).
\end{equation}	

\end{thm}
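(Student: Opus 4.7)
The plan is to compute $\langle\H(z)\rangle_{av}=\frac{2}{\sqrt{|d_K|}}\int_{\P}\H(z)\,dA$ by an unfolding argument. By Proposition \ref{H-properties}(3), $\H$ is $\OO_d$-periodic, and the translation action of $\tilde{\Gamma}_d$ permutes the set $S=\{h\in\mathcal{H}(\OO_d,-\D):a_h<0\}$ indexing the sum defining $\H$. Grouping $S$ into $\tilde{\Gamma}_d$-orbits $[h]$ and using that $\{\P+\lambda\}_{\lambda\in\OO_d}$ tiles $\CC$, the non-negativity of the integrand lets me apply Tonelli to get
$$\int_{\P}\H(z)\,dA=\sum_{[h]}\int_{\CC}\max\!\left(0,\,h(z,1)^{k}\right)dA,$$
where $[h]$ runs over a set of orbit representatives.

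I would then identify each orbit. The computation of $\overbar{A}(h)=A^{t}h\overbar{A}$ for $A=\begin{psmallmatrix}1&\lambda\\0&1\end{psmallmatrix}$ carried out in the proof of Proposition \ref{H-properties} shows that the translation action fixes $a_h$ and sends $b_h\mapsto b_h+a_h\bar{\lambda}$; since $c_h=(N(b_h)-\D)/a_h$ is determined by $a_h,b_h$ and the discriminant condition, each orbit is specified uniquely by the pair $(a_h,\,b_h\bmod a_h\OO_d)$. Writing $a_h=-n$ with $n\geq 1$, the integrality $c_h\in\mathbb{Z}$ is equivalent to $N(\beta)+(-\D)\equiv 0\pmod{n}$ for $\beta\in\OO_d/n\OO_d$, so for each $n\geq 1$ there are exactly $r(-\D,n)$ orbits with $|a_h|=n$.

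For each representative I would compute the integral by completing the square. Because $a_h=-n<0$, a direct check gives $n\,h(z,1)=\D-|a_hz+\b_h|^{2}$, so $h(z,1)>0$ precisely on the open disk $\{|a_hz+\b_h|^{2}<\D\}$. Substituting $u=a_hz+\b_h$ (with $dA_z=n^{-2}\,dA_u$), switching to polar coordinates and then setting $s=\D-r^{2}$, one obtains
$$\int_{\CC}\max\!\left(0,h(z,1)^{k}\right)dA=\frac{1}{n^{k+2}}\int_{|u|^{2}<\D}(\D-|u|^{2})^{k}\,dA_u=\frac{\pi\,\D^{k+1}}{(k+1)\,n^{k+2}}.$$

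Assembling the pieces,
$$\langle\H(z)\rangle_{av}=\frac{2}{\sqrt{|d_K|}}\sum_{n=1}^{\infty}r(-\D,n)\cdot\frac{\pi\,\D^{k+1}}{(k+1)\,n^{k+2}}=\frac{2\pi\,\D^{k+1}}{(k+1)\sqrt{|d_K|}}\,Z(-\D,k+1),$$
which is the claim. The main obstacle I anticipate is the orbit bookkeeping: verifying that the asserted orbit structure is correct (using the explicit translation action) and that the sign constraint $a_h<0$ yields $r(-\D,n)$ rather than $r(\D,n)$. The convergence and the interchange of sum and integral are routine consequences of the non-negativity of the summands together with the absolute convergence of $Z(-\D,k+1)$ for $k\geq 1$.
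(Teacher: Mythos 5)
Your proposal is correct and follows essentially the same route as the paper: unfold the sum over $\tilde{\Gamma}_d$-orbits, count the orbits with $|a_h|=n$ via the congruence $N(\beta)\equiv\D\pmod{n}$ to get $r(-\D,n)$, and evaluate the resulting disk integral (your substitution $u=a_hz+\b_h$ is just a rescaled version of the paper's $z=\frac{-\b_h+w\sqrt{\D}}{a_h}$, both yielding $\frac{\pi\D^{k+1}}{(k+1)n^{k+2}}$ per orbit). The only cosmetic difference is that you justify the interchange of sum and integral by Tonelli rather than by the uniform convergence of $\H$ on $\P$.
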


\begin{proof}
We have 	
\begin{equation*}
\langle \H(z) \rangle_{av}=\frac{1}{|\P|}\int_{\P}\sum_{\substack{h\in \mathcal{H}(\mathcal{O}_d,-\D)\\a_h<0}}\max(0,h(z,1)^{k})\,dx\,dy
\end{equation*}
Since $\H$ is continuous, it converges uniformly on $\P$. Thus, we can interchange the integral and the sum to get	
\begin{align*}
\langle \H(z) \rangle_{av}&=\frac{1}{|\P|}\sum_{\substack{h\in \mathcal{H}(\mathcal{O}_d,-\D)\\a_h<0}}\int_{\P}\max(0,h(z,1)^{k})\,dx\,dy\\
&=\frac{1}{|\P|}\sum_{\substack{h\in \mathcal{H}(\mathcal{O}_d,-\D)/\tilde{\Gamma}_d\\a_h<0}}\left(\sum_{\lambda \in \OO_d}\int_{\P}\max(0,h(z+\lambda,1)^{k})\right)\,dx\,dy\\
&=\frac{1}{|\P|}\sum_{\substack{h\in \mathcal{H}(\mathcal{O}_d,-\D)/\tilde{\Gamma}_d\\a_h<0}}\int_{\mathbb{C}}\max(0,h(z,1)^{k})\,dx\,dy\\
&=\frac{1}{|\P|}\sum_{\substack{h\in \mathcal{H}(\mathcal{O}_d,-\D)/\tilde{\Gamma}_d\\a_h<0}}\int_{-\infty}^{+\infty}\int_{-\infty}^{+\infty}\max(0,h(z,1)^{k})\,dx\,dy.\\
\end{align*}
Making the substitution $z=\frac{-\b_h+w\sqrt{\D}}{a_h}$ where $w=u+iv \in \mathbb{C}$ produces 
\begin{align*}
\langle \H(z) \rangle_{av}&=\frac{1}{|\P|}\sum_{\substack{h\in \mathcal{H}(\mathcal{O}_d,-\D)/\tilde{\Gamma}_d\\a_h<0}}\int_{-\infty}^{+\infty}\int_{-\infty}^{+\infty}\max\left(0,\left(\frac{(w\w-1)\D}{a_h}\right)^{k}\right)\frac{\D}{a_h^2}\,du\,dv\\
&=\frac{\D^{k+1}}{|\P|}\sum_{\substack{h\in \mathcal{H}(\mathcal{O}_d,-\D)/\tilde{\Gamma}_d\\a_h<0}}\frac{1}{|a_h|^{k+2}}\int_{-\infty}^{+\infty}\int_{-\infty}^{+\infty}\max\left(0,(1-w\w)^{k}\right)\,du\,dv\\
&=\frac{\D^{k+1}}{|\P|}\sum_{\substack{h\in \mathcal{H}(\mathcal{O}_d,-\D)/\tilde{\Gamma}_d\\a_h<0}}\frac{1}{|a_h|^{k+2}}\iint\limits_{R}(1-u^2-v^2)^{k}\,du\,dv\\
\end{align*}
where $R$ is the unit circle centered at the origin. The double integral can be evaluated using polar coordinates to get $$\iint\limits_{R}(1-u^2-v^2)^{k}\,du\,dv=\int_{0}^{2\pi}\int_{0}^{1}(1-r^2)^{k}r\,dr\,d\theta=\frac{\pi}{k+1}.$$ So, we obtain 
\begin{align*}
\langle \H(z) \rangle_{av}&=\frac{\D^{k+1}}{|\P|}\sum_{\substack{h\in \mathcal{H}(\mathcal{O}_d,-\D)/\tilde{\Gamma}_d\\a_h<0}}\frac{1}{|a_h|^{k+2}}\frac{\pi}{k+1}\\
&=\frac{2\pi\D^{k+1}}{(k+1)\sqrt{|d_K|}}\sum_{\substack{h\in \mathcal{H}(\mathcal{O}_d,-\D)/\tilde{\Gamma}_d\\a_h<0}}\frac{1}{|a_h|^{k+2}}
\end{align*}
Notice that, the action of $\tilde{\Gamma}_d$ on $h=\begin{psmallmatrix}a & b\\ \b & c \end{psmallmatrix} $ corresponds to simply shifting $b$ and $\b$ by multiples of $a$ in $\OO_d$ since 
$$\gamma(h)=\begin{pmatrix}
a&a\lambda+b\\a\overbar{\lambda}+\b&a\lambda\overbar{\lambda}+b\overbar{\lambda}+\b\lambda+c
\end{pmatrix}$$
where $\gamma=\begin{psmallmatrix}1 & \lambda\\ 0 & 1 \end{psmallmatrix}.$ Since $a_h$ is fixed under the action of $\tilde{\Gamma}_d$,  the number of equivalence classes in $\mathcal{H}(\mathcal{O}_d,-\D)/\tilde{\Gamma}_d$ with $a_h=a$ is precisely $$\#\{b \in \OO_d/a\OO_d \ |\ b\b\equiv \D\ (\text{mod}\ a)\}=r(-\D,a).$$ Thus, we obtain 
\begin{align*}
\langle \H(z) \rangle_{av}&=\frac{2\pi\D^{k+1}}{(k+1)\sqrt{|d_K|}}\sum_{a=1}^{\infty}\frac{r(-\D,a)}{a^{k+2}}.
\end{align*}
\end{proof}

As an application of Theorem \ref{avg} we get the following explicit formulas for the special values.

\begin{cor}\label{Lformulas} \leavevmode	
\begin{enumerate}
	\item Let $d=1,2,3,7,11$ and $d_K$ be the discriminant of $K=\mathbb{Q}(\sqrt{-d})$. Then,
\begin{align}
L(\chi_{d_K},3)=&\frac{\pi^3\D^2}{6\alpha_{2,\D}\sqrt{|d_K|}}\ \theta(\D,2)\label{lformula1},\\
	L(\chi_{d_K},-2)=&\frac{-|d_K|^2\D^2}{12\alpha_{2,\D}}\ \theta(\D,2)\label{lformula2}.
\end{align}
\item Let $d=1,3,7$ and $d_K$ be the discriminant of $K=\mathbb{Q}(\sqrt{-d})$. Then,
\begin{align}
L(\chi_{d_K},5)=&\frac{\pi^5\D^4}{180\alpha_{4,\D}\sqrt{|d_K|}}\ \theta(\D,4),\\
L(\chi_{d_K},-4)=&\frac{|d_K|^4\D^4}{120\alpha_{4,\D}}\ \theta(\D,4).
\end{align}

\item Let $d=3$ and $d_K$ be the discriminant of $K=\mathbb{Q}(\sqrt{-d})$. Then,
\begin{align}
L(\chi_{d_K},7)=&\frac{\pi^7\D^6}{2835\alpha_{6,\D}\sqrt{3}}\ \theta(\D,6),\\
L(\chi_{d_K},-6)=&\frac{-81\D^6}{28\alpha_{6,\D}}\ \theta(\D,6).
\end{align}
\end{enumerate} 

\end{cor}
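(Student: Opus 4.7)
The plan is to combine Theorem \ref{mainthmintro} with Theorem \ref{avg}, interpreted through the identity (\ref{EGM-zeta}) for the Elstrodt--Grunewald--Mennicke zeta function. For each of the pairs $(k,d)$ listed in the statement, Theorem \ref{mainthmintro} guarantees that $H_{k,\Delta}$ is constant, equal to $\alpha_{k,\Delta}=H_{k,\Delta}(0)$, so its average over the fundamental parallelogram $\mathcal{P}$ is simply $\alpha_{k,\Delta}$. On the other hand, Theorem \ref{avg} evaluates the same average as $\frac{2\pi\Delta^{k+1}}{(k+1)\sqrt{|d_K|}}\,Z(-\Delta,k+1)$, and since $-\Delta\neq 0$ the formula (\ref{EGM-zeta}) identifies $Z(-\Delta,k+1)$ with $\theta(\Delta,k+1)\,\zeta_{\mathbb{Q}}(k+1)\,L(\chi_{d_K},k+2)^{-1}$. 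Equating the two expressions and solving for $L(\chi_{d_K},k+2)$ produces a closed form in $\alpha_{k,\Delta}$, $\zeta_{\mathbb{Q}}(k+1)$, and the finite Euler product $\theta(\Delta,k+1)$.

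The stated formulas at $s=3,5,7$ then follow from Euler's classical evaluations $\zeta_{\mathbb{Q}}(2)=\pi^{2}/6$, $\zeta_{\mathbb{Q}}(4)=\pi^{4}/90$, $\zeta_{\mathbb{Q}}(6)=\pi^{6}/945$, corresponding to $k=1,3,5$. These supply the explicit rational powers of $\pi$, namely $\pi^{3}/6$, $\pi^{5}/180$, and $\pi^{7}/2835$, that appear on the right-hand sides of the positive-argument formulas. The range of admissible $d$ in each case is precisely the range for which Theorem \ref{mainthmintro} applies.

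For the negative values $L(\chi_{d_K},-2),L(\chi_{d_K},-4),L(\chi_{d_K},-6)$, the remaining step is to invoke the functional equation for the Dirichlet $L$-function of the real primitive character $\chi_{d_K}$ of conductor $|d_K|$. Because $d_K<0$ the character is odd, so the functional equation takes the form
\[
L(\chi_{d_K},1-s)=\left(\frac{|d_K|}{\pi}\right)^{\!s-\frac{1}{2}}\frac{\Gamma\!\left(\tfrac{s+1}{2}\right)}{\Gamma\!\left(\tfrac{2-s}{2}\right)}L(\chi_{d_K},s).
\]
Substituting $s=3,5,7$ converts each positive-argument formula into its negative-argument counterpart and, after simplifying the $\Gamma$-quotient, yields the prefactors $-|d_K|^{2}/12$, $|d_K|^{4}/120$, and (after specializing $|d_K|=3$) $-81/28$. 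The only real obstacle in the argument is bookkeeping: the signs and exact powers of $|d_K|$ coming out of the $\Gamma$-quotient must be tracked carefully. A useful consistency check is provided by the worked example with $K=\mathbb{Q}(i)$, $\Delta=3$ at the end of the introduction, which recovers $L(\chi_{-4},-2)=-1/2$ and $L(\chi_{-4},-4)=5/2$ precisely.
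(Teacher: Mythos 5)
Your proposal is correct and follows essentially the same route as the paper: the positive-argument formulas come from equating the constant value $\alpha_{k,\D}$ of $\H$ (Theorem \ref{mainthmintro}) with the average value from Theorem \ref{avg} via the identity (\ref{EGM-zeta}) and Euler's evaluations of $\zeta_{\mathbb{Q}}(2),\zeta_{\mathbb{Q}}(4),\zeta_{\mathbb{Q}}(6)$, and the negative-argument formulas follow from the functional equation of $L(\chi_{d_K},s)$. The only cosmetic difference is that you write the functional equation for the odd character in its $\Gamma$-quotient form rather than the paper's equivalent $2|d_K|^{s-1/2}(2\pi)^{-s}\Gamma(s)\sin(s\pi/2)$ form; the resulting prefactors agree.
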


\begin{proof}
Since $\H$ is constant for $k=1$ and since $\zeta_{\mathbb{Q}}(2)=\frac{\pi^2}{6}$, we immediately get the formula in the equation (\ref{lformula1}) using Theorem \ref{avg} and the equation (\ref{EGM-zeta}). To get the formula in the equation (\ref{lformula2}) we use the functional equation 
$$L(\chi_{d_K},1-s)=2|d_K|^{s-1/2}(2\pi)^{-s}\Gamma(s)\sin\left(\frac{s\pi}{2}\right)L(\chi_{d_K},s).$$

The other formulas are obtained similarly using $\zeta_{\mathbb{Q}}(4)=\frac{\pi^4}{90}$ and $\zeta_{\mathbb{Q}}(6)=\frac{\pi^6}{945}$.

\end{proof}

\section{Cocycle property of $\H$}

Another important property of $\H$ is that it can be used to construct a nontrivial cocycle belonging to cohomology group $H^{1}(\Gamma_d,V_{k,k}(\CC))$. The groups $\Gamma=\psl2$ where $\OO$ is the ring of integers of any imaginary quadratic field are called Bianchi groups and their cohomology groups are fundamental to the study of Bianchi modular forms (see \cite{Sen1}).

Let us now define $H^{1}(\Gamma_d,V_{n,n})$.  A map $f:\Gamma_d\to V_{n,n} $ is called a $1$-cocycle if it satisfies 
\begin{equation}\label{cocycle-condition}
	f(\gamma_1\gamma_{2})=f(\gamma_{1})|\gamma_{2}+f(\gamma_{2}) 
\end{equation}
for all $\gamma_{1},\gamma_{2}\in \Gamma_d$ and where $|\gamma_2$ denotes the action of $\gamma_{2}$ on $f(\gamma_1)$ as defined in equation (\ref{twovariable}). For a fixed $P\in V_{n,n}$ the map 
\begin{equation}\label{coboundary-condition}
	\gamma\mapsto P|\gamma-P
\end{equation}
is a 1-cocycle and is called a 1-coboundary corresponding to $P$. The set of 1-cocycles denoted by $C(\Gamma_d,V_{n,n})$ and the set of 1-coboundaries denoted by $B(\Gamma_d,V_{n,n})$ are $\CC$-vector spaces. We have that $B(\Gamma_d,V_{n,n}) \subset C(\Gamma_d,V_{n,n})$, and the (first) cohomology group $H^{1}(\Gamma_d,V_{n,n})$ of $\Gamma_d$ with coefficients in $V_{n,n}$ is defined to be
\begin{equation}\label{cohomology-group}
	H^{1}(\Gamma_d,V_{n,n})=\frac{C(\Gamma_d,V_{n,n})}{B(\Gamma_d,V_{n,n})}.
\end{equation}

In general, if $G$ is a finitely presented group and $M$ is an $RG$-module for a commutative ring $R$ then $H^{1}(G,M)$ can be computed using the presentation of the group. An illustration of how this is carried out is given in \cite{Sen} through an example (see also \cite{FGT}). Since Bianchi groups are finitely presented this method is used in \cite{Sen,FGT} to study the structure of $H^{1}(\Gamma,V_{n,n})$. Using the same method we will define a certain space of cocycles in $H^{1}(\Gamma_d,V_{k,k})$ that contains the cocycle constructed using $\H$.

Recall that our function $\H$ is defined when $\OO_d$ is the ring of integers of an Euclidean imaginary quadratic field, i.e when $d=1,2,3,7,11$. So, we fix $\omega=i$ in $\OO_{1}$, $\omega=i\sqrt{2}$ in $\OO_{2}$, $\omega=\frac{-1+i\sqrt{3}}{2}$ in $\OO_{3}$, $\omega=\frac{1+i\sqrt{7}}{2}$ in $\OO_{7}$, $\omega=\frac{1+i\sqrt{11}}{2}$ in $\OO_{11}$. We also let 
\begin{gather*}
	S=\begin{pmatrix}
		0&-1\\ 1&0 
	\end{pmatrix},\hskip 0.1in T=\begin{pmatrix}
		1&1\\ 0&1 
	\end{pmatrix},\hskip 0.1in T_{\omega}=\begin{pmatrix}
		1&\omega\\ 0&1 
	\end{pmatrix}.
\end{gather*}

\subsection{ The case $d=1$} 
The following presentation of $\Gamma_{1}$ is given in \cite{Fine}
\begin{gather*}
	\Gamma_{1}=\langle\, S,T,T_{\omega}, L\,\big|\,  S^2=L^2=(SL)^2=(TL)^2=(T_{\omega}L)^2=(ST)^3\\ 
	=(T_{\omega}SL)^3=[T,T_{\omega}]=\1\, \rangle
\end{gather*}
where $L=\begin{pmatrix}
i&0\\ 0&-i 
\end{pmatrix}$, and $[T,T_{\omega}]$ is the commutator of $T$ and $T_\omega$.


We are interested in the cocycles belonging to the subspace $C_{p}(\Gamma_1,V_{k,k})$ of $C(\Gamma_1,V_{k,k})$ defined as
\begin{gather*}
	C_{p}(\Gamma_1,V_{k,k})=\{f \in C(\Gamma_1,V_{k,k})\, |\, f(T)=f(T_{\omega})=f(L)=0 \}.
\end{gather*}
The analogous space of cocycles in the classical case of $\textbf{PSL}(2,\mathbb{Z})$ is called the space of parabolic 1-cocycles and hence the subscript $p$ in the notation $C_{p}(\Gamma_1,V_{k,k}).$ The next proposition shows that $C_{p}(\Gamma_1,V_{k,k})$ can be identified with the following subspace $W_{k,k}$ of $V_{k,k}$:
\begin{align*}
	W_{k,k}&:=\{P\in V_{k,k}\,:\,P|(\1+S)=P|(\1-L)=P|(\1+U+U^2)=P|(\1+E+E^2)=0\}\\
	&=ker(\1+S)\cap ker(\1-L)\cap ker(\1+U+U^2)\cap ker(\1+E+E^2)
\end{align*} where $U=TS$ and $E=T_{\omega}SL$.

\begin{prop}\label{W1}
	The map sending $f\in C_{p}(\Gamma_1,V_{k,k}) $ to $P_S \in V_{k,k}$ is a $\CC$-vector space isomorphism from $C_{p}(\Gamma_1,V_{k,k})$ to $W_{k,k}$.
\end{prop}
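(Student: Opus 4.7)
The plan is to verify that the evaluation map $\Phi : f \mapsto f(S)$ is a well-defined $\CC$-linear bijection by invoking the standard principle that a cocycle on a finitely presented group is determined by its values on the generators, and that prescribed generator values extend to a genuine cocycle on the whole group if and only if they are consistent with each defining relator. Throughout, I will repeatedly use the iterated form of \eqref{cocycle-condition},
\[
f(g_1 g_2 \cdots g_n) = \sum_{i=1}^{n} f(g_i)\bigl|(g_{i+1}\cdots g_n),
\]
which follows by induction.

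For well-definedness into $W_{k,k}$, I take $f \in C_p(\Gamma_1, V_{k,k})$, set $P := f(S)$, and substitute each of the eight relators of $\Gamma_1$ into the expansion above, simplifying via $f(T) = f(T_\omega) = f(L) = 0$. The relator $S^2=\1$ immediately gives $P|(\1+S)=0$. The relator $(ST)^3 = \1$ gives $P|(\1+U+U^2)\,T = 0$, hence $P|(\1+U+U^2)=0$ because the action of $T$ is invertible on $V_{k,k}$. The relator $(SL)^2 = \1$, combined with $S^2 = L^2 = \1$, implies $SL=LS$ and hence $LSL=S$; plugging this into the expansion and using $P|(\1+S)=0$ produces $P|(\1-L)=0$. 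The relator $(T_\omega SL)^3 = \1$ gives $(P|L)\bigl|(\1+E+E^2) = 0$, and after substituting $P|L = P$ this becomes $P|(\1+E+E^2)=0$. The remaining relators $L^2$, $(TL)^2$, $(T_\omega L)^2$ and $[T,T_\omega]$ involve only generators on which $f$ vanishes and therefore impose no further condition. Hence $P \in W_{k,k}$, and linearity of $\Phi$ is clear from the definition.

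Injectivity is immediate: if $\Phi(f)=0$ then $f$ vanishes on all four generators, and the cocycle rule propagates this to all of $\Gamma_1$. For surjectivity, given $P \in W_{k,k}$, I declare $f(S) := P$, $f(T) := f(T_\omega) := f(L) := 0$, and extend to $\Gamma_1$ using the cocycle rule. The extension is well-defined precisely when the prescribed generator values are consistent with each of the eight relators, and the calculations of the previous paragraph, read in reverse, show that the four conditions defining $W_{k,k}$ are exactly what this consistency requires; the four relators involving only $T,T_\omega,L$ produce no additional constraint. The principal technical point---and the one requiring the most care---is the bookkeeping in the relator-by-relator check, in particular the observation that $(SL)^2=\1$ does not give an independent constraint but rather, once combined with $P|(\1+S)=0$, produces exactly the identity $P|(\1-L)=0$ listed as a separate defining condition of $W_{k,k}$, and similarly that $(T_\omega SL)^3=\1$ produces $P|(\1+E+E^2)=0$ only after invoking $P|(\1-L)=0$ to simplify $f(E)=P|L$ to $P$.
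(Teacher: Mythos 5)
Your proposal is correct and follows essentially the same route as the paper's proof: the map $f\mapsto f(S)$, a relator-by-relator expansion using $f(T)=f(T_\omega)=f(L)=0$ to land in $W_{k,k}$ (with $(SL)^2=\1$ yielding $P|(\1-L)=0$ and $(T_\omega SL)^3=\1$ yielding $P|(\1+E+E^2)=0$ after substituting $P|L=P$), injectivity from vanishing on generators, and surjectivity by reversing the computation against the presentation. The only cosmetic difference is that the paper derives $P|(\1-L)=0$ by comparing $f(SL)=P|L$ with $f(LS)=P$ via $SL=LS$ rather than by expanding $(SL)^2=\1$ directly, which amounts to the same calculation.
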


\begin{proof}
	The proof is similar to that of the classical case which can be found in \cite[ Lemma 11.8.9]{CohSt}). Let $f\in C_{p}(\Gamma_1,V_{k,k})$ such that $f(\gamma)=P_{\gamma}\in V_{k,k}$ and assume that $f\mapsto f(S)=P_S$. Since $S^2=\1$, the cocycle property of $f$ gives
	\begin{gather*}
		0=f(S^2)=f(S)|S+f(S) \implies P_S|S+P_S=P_S|(\1+S)=0.
	\end{gather*}
	Again using the cocycle property of $f$ and the fact that $f(L)=0$ leads to 
	\begin{align*}
		f(SL)&=f(S)|L+f(L)=P_S|L\\
		f(LS)&=f(L)|S+f(S)=P_S
	\end{align*}
	But $SL=LS$ and hence $P_S|(\1-L)=0$.
	
	The relation $(ST)^3=\1$ gives
	\begin{gather*}
		f(ST)|(ST)^2+f(ST)|(ST)+f(ST)=0,
	\end{gather*} 
	and since $f(ST)=f(S)|T+f(T)=P_S|T$ we obtain
	\begin{align*}
		(P_S|T)|(ST)^2+(P_S|T)|ST+P_S|T&=0\\
		P_S|TSTST+P_S|TST+P_S|T&=0\\
		(P_S|U^2+P_S|U+P_S)|T&=0.
	\end{align*}
	This shows that $P_S|(\1+U+U^2)\in ker(T)$ implying that $P_S|(\1+U+U^2)$ is the zero polynomial. 
	
	Similarly, $P_S|(\1+E+E^2)=0$ follows from the fact that $f(T_{\omega}SL)=P_S|L$ and $P_S|L=P_S$ which proves that $P_S\in W_{k,k}$.
	
	If $P_S=0$, then $f$ vanishes on all the generators of $\Gamma_1$ and the cocycle property of $f$ implies that $f(\gamma)=P_\gamma=0$ for all $\gamma \in \Gamma_1$. Thus the map $f\mapsto f(S)=P_S$ is injective.
	
	Finally, we need to show that the map $f\mapsto f(S)=P_S$ is surjective. Let $P\in W_{k,k}$  and set $f(S)=P|S$ for $f\in C_{p}(\Gamma_1,V_{k,k})$. Then, it is easily seen that $f$ agrees with the defining relations of $\Gamma_1$, i.e., $f(S^2)=0$ and the same holds for the other defining relations. The surjectivity then follows from the fact that 
	\begin{gather*}
		S^2=L^2=(SL)^2=(TL)^2=(T_{\omega}L)^2=(ST)^3=(T_{\omega}SL)^3=[T,T_{\omega}]=\1\
	\end{gather*}
	generates all relations for $\Gamma_1$.
\end{proof}

Let us now discuss how the function $\H$ can be used to obtain a nontrivial element of $C_p(\Gamma_{1},V_{k,k})$. Recall in Proposition \ref{S-act} we showed that
\begin{gather*}
	\H|(\1-S)=P_{k,\D}(z,\z)=\sum_{\substack{N(b)-ac=\D \\ c<0<a}} \left(az\z+bz+\b\z+c\right)^{k}
\end{gather*} where $b\in \OO_1$ and $a,c \in \mathbb{Z}.$ In what follows we will show, that the polynomial $P_{k,\D}$ belongs to $W_{k,k}$ so that the map $f:\Gamma_{1} \to V_{k,k}$ defined by 
\begin{gather*}
	\gamma \mapsto (\H|(\1-S))|\gamma
\end{gather*} is a cocycle belonging to $C_p(\Gamma_{1},V_{k,k})$. Indeed, we will precisely describe the subspace of $W_{k,k}$ that $P_{k,\D}$ belongs to. We will also carry out the same kind of analysis for the other values of $d$ as well, although the description of $W_{k,k}$ and its subspace vary depending on the value of $d$.


The element $\varepsilon_1=\begin{psmallmatrix}
i&0\\ 0&1 
\end{psmallmatrix} \in \textbf{PGL}(2,\OO_{1})$ acts on $V_{k,k}$ by $P|\varepsilon_1=P(iz,\overline{i}\z)$ and splits $V_{k,k}$ into a direct sum of the spaces defined as follows: The linear operator induced by the action of $\varepsilon_1$ on $V_{k,k}$ has four eigenvalues, namely $\pm 1, \pm i$ and four eigenspaces corresponding to these eigenvalues. We let $V_{k,k}^{1},V_{k,k}^{-1},V_{k,k}^{i},V_{k,k}^{-i}$   be the eigenspaces corresponding to the eigenvalues $\pm 1,\pm i$ respectively. Then
\begin{align*}
	V_{k,k}^{1}=&\{P\in V_{k,k}\,|\, P(iz,\overline{i}\z)=P(z,\z)  \}\\
	V_{k,k}^{-1}=&\{P\in V_{k,k}\,|\, P(iz,\overline{i}\z)=-P(z,\z)  \}\\
	V_{k,k}^{i}=&\{P\in V_{k,k}\,|\, P(iz,\overline{i}\z)=iP(z,\z)  \}\\
	V_{k,k}^{-i}=&\{P\in V_{k,k}\,|\, P(iz,\overline{i}\z)=-iP(z,\z)  \}
\end{align*}
and therefore $V_{k,k}=V_{k,k}^{1}\oplus V_{k,k}^{-1}\oplus V_{k,k}^{i}\oplus V_{k,k}^{-i}$. \\

The next proposition shows that $W_{k,k}$ is stable under the action of $\varepsilon_1$.

\begin{prop}\label{W1-decomp}
	$W_{k,k}|\varepsilon_1 =W_{k,k}$ and hence 
	\begin{gather*}
		W_{k,k}=W_{k,k}^{1}\oplus W_{k,k}^{-1}\oplus W_{k,k}^{i}\oplus W_{k,k}^{-i}
	\end{gather*} where $W_{k,k}^{e}=W_{k,k}\cap V_{k,k}^{e}$ for $e=\pm1,\pm i.$ 
\end{prop}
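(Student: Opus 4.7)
The plan is to reduce the invariance statement to a fact about conjugation in $\textbf{PGL}(2,\OO_1)$. Although $\varepsilon_1$ does not lie in $\Gamma_1$, it normalizes $\Gamma_1$ and so induces an automorphism $\phi$ of $\Gamma_1$ by conjugation. Before invoking this I would first check that the action (\ref{twovariable}) extends to $\textbf{PGL}(2,\OO_1)$: for any scalar $\lambda\in\CC^*$ one has $P|(\lambda\gamma)=|\lambda|^{2k}P|\gamma$, and every unit of $\OO_1$ has modulus one, so the action is well-defined on $\textbf{PGL}(2,\OO_1)$. A direct matrix computation then gives
\begin{gather*}
\phi(L)=L,\qquad \phi(T)=T_\omega,\qquad \phi(T_\omega)=T^{-1},\qquad \phi(S)=LS,
\end{gather*}
and consequently $\phi(U)=T_\omega LS=T_\omega SL=E$ (using $LS=SL$ in $\Gamma_1$) and $\phi(E)=T^{-1}LSL=T^{-1}S$ (using $LSL=L^2S=S$).

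The key identity $\varepsilon_1 X=\phi(X)\varepsilon_1$ translates on the module side to $(P|\varepsilon_1)|X=(P|\phi(X))|\varepsilon_1$, so proving $P|\varepsilon_1\in W_{k,k}$ reduces to checking $P|\phi(R)=0$ for each of the four defining relations $R\in\{\1-L,\ \1+S,\ \1+U+U^2,\ \1+E+E^2\}$. The case $R=\1-L$ is immediate since $\phi(L)=L$. For $R=\1+S$ one uses $P|L=P$: $P|(\1+LS)=P+(P|L)|S=P|(\1+S)=0$. For $R=\1+U+U^2$ the identity $\phi(U)=E$ turns the condition into $P|(\1+E+E^2)=0$, which is hypothesis. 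For $R=\1+E+E^2$ one uses $\phi(E)=T^{-1}S=(ST)^{-1}$; combined with the consequence $U^3=\1$ of $(ST)^3=\1$, this gives $\phi(E)=SU^{-1}S=SU^2S$ and $\phi(E)^2=SUS$, whence
\begin{gather*}
P|(\1+\phi(E)+\phi(E)^2)=P+(P|S)|U^2S+(P|S)|US=P-(P|(U+U^2))|S=P+P|S=0,
\end{gather*}
where the last steps use $P|S=-P$ and $P|(U+U^2)=-P$. This establishes $W_{k,k}|\varepsilon_1\subseteq W_{k,k}$; the reverse inclusion follows by applying the same argument to the automorphism induced by $\varepsilon_1^{-1}$.

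For the direct-sum decomposition, the operator $|\varepsilon_1$ on $V_{k,k}$ satisfies $|\varepsilon_1^{4}=\1$ because $\varepsilon_1^{4}=\1$ in $\textbf{PGL}(2,\OO_1)$ (as $\varepsilon_1^{2}=L$ and $L^2=\1$). It is therefore diagonalizable with eigenvalues contained in $\{1,-1,i,-i\}$, and $V_{k,k}$ splits as the direct sum of the four eigenspaces $V_{k,k}^{e}$. Since $W_{k,k}$ has just been shown to be $|\varepsilon_1$-stable, the standard fact that an invariant subspace of a diagonalizable operator decomposes as the direct sum of its intersections with the eigenspaces yields $W_{k,k}=W_{k,k}^{1}\oplus W_{k,k}^{-1}\oplus W_{k,k}^{i}\oplus W_{k,k}^{-i}$. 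The main technical point I anticipate is the calculation for $R=\1+E+E^2$, where $\phi(E)$ does not appear among the generators in the definition of $W_{k,k}$; the resolution is the rewriting $T^{-1}S=SU^2S$ and the consequent reduction to the relations $P|(\1+S)=0$ and $P|(\1+U+U^2)=0$ that $P$ already satisfies.
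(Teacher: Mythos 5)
Your proposal is correct and follows essentially the same route as the paper: both prove $|\varepsilon_1$-stability of $W_{k,k}$ via the conjugation identities $\varepsilon_1 X\varepsilon_1^{-1}$ for $X\in\{S,L,U,E\}$ (your $\phi(U)=E$ agrees with the paper's $\varepsilon_1U\varepsilon_1^{3}=SE^{2}S$ since $E=SE^{2}S$ in $\Gamma_1$, and your $\phi(E)=SU^{2}S$ is literally the paper's identity), and then invoke the eigenspace decomposition of the order-$4$ operator $|\varepsilon_1$. Your packaging through the automorphism $\phi$ on generators, the remark on well-definedness of the action on $\textbf{PGL}(2,\OO_1)$, and the explicit linear-algebra step at the end are just added detail, not a different argument.
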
 
\begin{proof}
	Let $P\in W_{k,k}$. From $LS\varepsilon_1=\varepsilon_1 S$ we get that 
	\begin{align*}
		(P|\e_1)|(\1+S)&=P|\e_1+P|LS\e_1\\
		&=(P+P|LS)|\e_1. 
	\end{align*}	
	But, $P|L=P$ which gives 
	\begin{gather*}
		(P|\e_1)|(\1+S)=(P+P|S)|\e_1=0 \implies P|\e_1 \in ker(\1+S).
	\end{gather*}
	Because $\e_1 L=L\e_1$ we immediately get that $P|\e_1 \in ker(\1-L)$. From $\e_1 U\e_{1}^3=SE^2S$, $\e_1 U^2\e_{1}^3=SES$ and $P|S=-P$ we get that 
	\begin{align*}
		(P|\e_1)|(\1+U+U^2)&=P|\e_1+P|\e_1 U+P|\e U^2\\
		&=P|\e_1+P|SE^2S\e_1+P|SES\e_1\\
		&=(P+P|SE^2S+P|SES)|\e_1\\
		&=(-P|S-P|E^2S-P|ES)|\e_1\\
		&=(-P|-P|E^2-P|E)|S\e_1\\
		&=0
	\end{align*}
	and hence $P|\e_1 \in ker(\1+U+U^2)$.
	
	Finally, because $\e_1 E\e_{1}^3=SU^2S$ and $\e_1 E^2\e_{1}^3=SUS$ the preceding argument shows that $P|\e_1 \in ker(\1+E+E^2)$ which completes the proof of $W_{k,k}|\e_1=W_{k,k}$ so that 
	\begin{gather*}
		W_{k,k}=W_{k,k}^{1}\oplus W_{k,k}^{-1}\oplus W_{k,k}^{i}\oplus W_{k,k}^{-i}.
	\end{gather*} 
\end{proof}

\begin{prop}\label{W1+}
	The polynomial $P_{k,\D}$ belongs to the subspace $W_{k,k}^{1}$ and therefore the map $f:\Gamma_{1} \to V_{k,k}$ defined by 
	\begin{gather*}
		\gamma \mapsto (\H|(\1-S))|\gamma
	\end{gather*} is a cocycle belonging to $C_p(\Gamma_{1},V_{k,k})$.
\end{prop}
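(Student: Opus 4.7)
The statement amounts to two independent verifications: that $P_{k,\D}$ lies in each of the four kernels defining $W_{k,k}$, and that it lies in $V_{k,k}^{1}$; Proposition \ref{W1} then supplies the cocycle. The eigenspace condition $P_{k,\D}\in V_{k,k}^{1}$, i.e.\ $P_{k,\D}(iz,\bar{i}\bar{z})=P_{k,\D}(z,\bar{z})$, is immediate from Proposition \ref{PPolyforZ[i]prop}(2) applied with the unit $u=i\in\OO_{1}^{\times}$. Of the four kernel conditions, $P_{k,\D}|(\1+S)=0$ is Proposition \ref{PPolyforZ[i]prop}(3); for $P_{k,\D}|(\1-L)=0$ one computes the action of $L=\begin{psmallmatrix}i&0\\0&-i\end{psmallmatrix}$ directly: the scalar prefactor is $((-i)(i))^{k}=1$, so $P_{k,\D}|L=P_{k,\D}(-z,-\bar{z})$, and a second use of Proposition \ref{PPolyforZ[i]prop}(2) with $u=-1$ finishes that case.

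The substance of the proof lies in the two cubic vanishings $P_{k,\D}|(\1+U+U^{2})=0$ and $P_{k,\D}|(\1+E+E^{2})=0$. The plan is to exploit the elementary group ring identity $(\1-x)(\1+x+x^{2})=\1-x^{3}$ combined with the relations $U^{3}=\1$ (a consequence of $(ST)^{3}=\1$ and $S^{2}=\1$) and $E^{3}=\1$ (a defining relation of $\Gamma_{1}$). The bridge is to re-express the same polynomial $P_{k,\D}=\H|(\1-S)$ in two alternative forms:
\[
P_{k,\D}\;=\;\H|(\1-U)\;=\;\H|(\1-E).
\]
The first rewriting uses only $\H|T=\H$ (Proposition \ref{H-properties}(3)), since $\H|TS=(\H|T)|S=\H|S$, hence $\H|(\1-U)=\H|(\1-S)=P_{k,\D}$. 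The second uses $\H|T_{\omega}=\H|L=\H$ (the $L$-invariance coming from Proposition \ref{H-properties}(2) with $u=-1$, as $L$ acts by $z\mapsto -z$) together with the commutation $SL=LS$ in $\textbf{PSL}$ (forced by $(SL)^{2}=S^{2}=L^{2}=\1$), which yields $\H|T_{\omega}SL=\H|T_{\omega}LS=\H|S$, hence $\H|(\1-E)=P_{k,\D}$. With these presentations in hand each vanishing collapses in one line, e.g.
\[
P_{k,\D}|(\1+U+U^{2})\;=\;\H|(\1-U)(\1+U+U^{2})\;=\;\H|(\1-U^{3})\;=\;0,
\]
and the $E$-case is identical.

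Once all four kernel conditions and the $\varepsilon_{1}$-invariance are established, $P_{k,\D}\in W_{k,k}^{1}$, and the isomorphism of Proposition \ref{W1} transports this membership into a parabolic cocycle in $C_{p}(\Gamma_{1},V_{k,k})$ whose value at $S$ is $P_{k,\D}$; this is the cocycle attached to $\H$. The genuine obstacle I anticipate is precisely recognizing the two alternative factorizations $P_{k,\D}=\H|(\1-U)=\H|(\1-E)$, which depend on the ``absorption'' of $T$, $T_{\omega}$ and $L$ by $\H$ and on the $SL=LS$ commutation in $\textbf{PSL}$. Once they are visible, the telescoping identity in the group ring disposes of both cubic conditions uniformly and no further case-by-case work is needed.
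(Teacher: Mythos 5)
Your proposal is correct and follows essentially the same route as the paper: it establishes the four kernel conditions via the rewritings $P_{k,\D}=\H|(\1-U)=\H|(\1-E)$ (using the $T$-, $T_\omega$-, and $L$-invariance of $\H$ and $SL=LS$) together with $U^3=E^3=\1$, and gets the $\varepsilon_1$-eigenvalue from Proposition \ref{PPolyforZ[i]prop}(2). The only cosmetic difference is that you verify $P_{k,\D}\in\ker(\1-L)$ by computing the $L$-action directly as $z\mapsto -z$ with trivial automorphy factor, whereas the paper deduces it from $\H|L=\H$ and $SL=LS$; these are equivalent.
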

\begin{proof}
	Since $S^2=\1$ we immediately get
	\begin{gather*}
		P_{k,\D}|(\1+S)=(\H|(\1-S)|(\1+S)=0\implies P_{k,\D}\in ker(\1+S).
	\end{gather*}
	Recall from Proposition \ref{H-properties} that $\H|L=\H$ and since $SL=LS$ it follows that $P_{k,\D} \in ker(\1-L)$. Also from the same proposition we have 
	\begin{gather*}
		\H|T=\H \quad \text{and}\quad \H|T_{\omega_1}=\H.
	\end{gather*}
	Since $TS=U$ and $\H|T=\H$  notice that 
	\begin{gather*}
		P_{k,\D}=\H-\H|S=\H-(\H|T)|S=\H|(\1-U).
	\end{gather*}
	But $U^3=\1$ and so 
	\begin{gather*}
		(\H|(\1-U))|(\1+U+U^2)=\H|(\1-U+U-U^2+U^2-U^3)=0,
	\end{gather*}
	which shows that $P_{k,\D}\in ker(\1+U+U^2)$. Again, using $\H|T_{\omega}=\H$, $\H|L=\H$ and $LS=SL$ gives 
	\begin{gather*}
		P_{k,\D}=\H-\H|S=\H-(\H|T_{\omega}L)|S=\H|(\1-E).
	\end{gather*}
	Since $E^3=\1$ the preceding argument shows that $P_{k,\D}\in ker(\1+E+E^2)$ implying that $P_{k,\D}\in W_{k,k}$. Finally, by part (2) of Proposition \ref{PPolyforZ[i]prop} we have 
	\begin{gather*}
		P_{k,\D}|\e_1=P_{k,\D},
	\end{gather*}
	which concludes the proof that $P_{k,\D}\in W_{k,k}^{1} $.
\end{proof}
\newpage	
We wrote a computer program and made use of SageMath \cite{Sa} to compute the dimensions of $W_{k,k}$ and $W_{k,k}^{e}$ for $1\leq k\leq 31$. For those values of $k$ we found that $\textbf{dim}(W_{k,k}^{i})=\textbf{dim}(W_{k,k}^{-i})=0$. The data about the dimensions of $W_{k,k}$, $W_{k,k}^{1}$, and $W_{k,k}^{-1}$ for $1\leq k \leq 31$ is as follows:
\begin{table}[h]
	\centering
	\begin{tabular}{c rrrrrrrrrrrrrrrrr}

		$k$ & \vline & 1 & 3 & 5 & 7 & 9 & 11 & 13 & 15 & 17 & 19 & 21 & 23 & 25 & 27 & 29 & 31\\ 
		\hline
		$\textbf{dim}(W_{k,k}^{1})$ & \vline & 1 & 1 & 2 & 2 & 3  & 3 & 4 & 4 & 5 & 5 & 6 & 6 & 7 & 7 & 8 & 8\\
		
		$\textbf{dim}(W_{k,k}^{-1})$ & \vline & 0 & 0 & 0 & 1 & 0  & 1 & 1 & 1 & 1 & 2 & 1 & 2 & 2 & 2 & 2 & 3\\
		
		$\textbf{dim}(W_{k,k})$ & \vline & 1 & 1 & 2 & 3 & 3  & 4 & 5 & 5 & 6 & 7 & 7 & 8 & 9 & 9 & 10 & 11\\
	\end{tabular}
\end{table}\\
Since $\textbf{dim}(W_{k,k}^{i})=\textbf{dim}(W_{k,k}^{-i})=0$ for $1\leq k \leq 31$, according to Proposition \ref{W1-decomp} we must have 
\begin{gather*}
	\textbf{dim}(W_{k,k}^{1})=\textbf{dim}(W_{k,k}^{1})+\textbf{dim}(W_{k,k}^{-1}).
\end{gather*}
The data in each row of the above table was computed independent of this fact, and thankfully our data agrees with it. Based on this numerical evidence we are led to the following conjecture.
\begin{conjec}
	For odd $k\geq 1$ we have that 	
	\begin{align*}
		\textbf{dim}(W_{k,k}^{1})&=\left\lfloor \frac{k-1}{4} \right\rfloor+1,\\
		\textbf{dim}(W_{k,k})&=\left\lfloor \frac{k-1}{3} \right\rfloor+1.
	\end{align*}	
	
\end{conjec}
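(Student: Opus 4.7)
The plan is to combine explicit linear algebra with the cohomological interpretation of $W_{k,k}$ as the space of parabolic cocycles provided by Proposition \ref{W1}. Since the conjecture concerns dimensions of concrete finite-dimensional spaces, I would pursue a direct approach and a cohomological approach in tandem.

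First, I would separate $W_{k,k}$ into a coboundary piece and a reduced cohomology piece. A direct check shows that the only polynomials in $V_{k,k}$ fixed by the parabolic stabilizer $\langle T, T_\omega, L\rangle$ of $\infty$ are the constants, while $V_{k,k}$ itself has no nonzero $\Gamma_1$-invariants. Hence the parabolic coboundaries sitting inside $W_{k,k}$ form the one-dimensional line spanned by $1|S-1=|z|^{2k}-1$, giving
\[
\dim W_{k,k} = 1 + \dim H^1_p(\Gamma_1, V_{k,k}),
\]
and the analogous identity holds after restriction to $\varepsilon_1$-invariants. The conjecture is therefore equivalent to
\[
\dim H^1_p(\Gamma_1, V_{k,k}) = \left\lfloor \tfrac{k-1}{3} \right\rfloor
\quad \text{and} \quad
\dim H^1_p(\Gamma_1, V_{k,k})^{\varepsilon_1} = \left\lfloor \tfrac{k-1}{4} \right\rfloor.
\]

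Second, I would invoke the generalized Eichler--Shimura isomorphism for Bianchi groups to split $H^1_p(\Gamma_1, V_{k,k})$ into a cuspidal piece (controlled by Bianchi cusp forms of weight $(k+2,k+2)$ on $\Gamma_1$) and an Eisenstein piece (controlled by the single cusp of $\Gamma_1\backslash\mathbb{H}^3$ together with the torsion in its isotropy groups). The integer $3$, coming from the elliptic elements $U$ and $E$, and the integer $4$, coming from the order-four element $\varepsilon_1$, naturally produce the periodicities in the conjectured formulas; the Eisenstein dimension should be tractable via a Borel--Serre or Harder-type calculation, and its contribution can be pre-computed and then subtracted.

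The main obstacle is controlling the cuspidal contribution uniformly in $k$: matching the clean conjectural formulas forces $\dim S^{\mathrm{cusp}}_{k+2,k+2}(\Gamma_1) = 0$ for all odd $k \ge 1$. Low-weight vanishing for Bianchi cusp forms over $\mathbb{Q}(i)$ is known in several ranges, but a uniform vanishing statement along all odd integer weights does not, as far as I know, appear in the literature and would likely require either a trace-formula input or a direct geometric argument. A complementary, more elementary route would bypass Bianchi cusp forms altogether: multiplication of $|z|^2-1$ with suitable lower-degree $W_{k',k'}$-elements furnishes natural candidate generators of $W_{k,k}$, and the conjecture would then reduce to a controlled rank calculation on the combined operators $\1+S$, $\1-L$, $\1+U+U^2$, $\1+E+E^2$, together with a verification that the candidate generators are linearly independent. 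The difficulty on this side is proving completeness of a candidate generating set uniformly in $k$.
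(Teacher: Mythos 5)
This statement is a conjecture in the paper: the author gives no proof, only SageMath computations of $\dim W_{k,k}$ and $\dim W_{k,k}^{1}$ for odd $1\le k\le 31$ together with a consistency check against the cohomology tables of Finis--Gr\"unewald--Tirao \cite{FGT}. Your submission is likewise a plan rather than a proof, and you flag its incompleteness yourself, so neither side contains a complete argument. Your opening reduction is sound and agrees with the paper's concluding remarks: the parabolic coboundaries inside $W_{k,k}$ form exactly the line spanned by $1|(S-\1)=|z|^{2k}-1$, so the conjecture is equivalent to $\dim H^{1}_{\text{par}}(\Gamma_1,V_{k,k})=\lfloor (k-1)/3\rfloor$, and $\lfloor(k-1)/4\rfloor$ for the $\varepsilon_1$-invariant part.

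However, one concrete step of your plan is wrong, and it is the step that conceals the real difficulty. You assert that matching the conjectured formulas forces $\dim S_{k+2}(\Gamma_1)=0$ for all odd $k$, with the linear growth in $k$ absorbed by the Eisenstein part. The opposite is true. The Eisenstein contribution to $H^{1}(\Gamma_1,V_{k,k})$ is bounded independently of $k$, since $\mathbb{Q}(i)$ has class number one and hence a single cusp (compare the paper's footnote, where $\dim H^{1}-\dim H^{1}_{\text{cusp}}=\nu_{K,k}h_K$), whereas the paper's identity $\dim S_{k+2}(\Gamma_1)=\dim W_{k,k}-1$ turns the conjecture into the prediction $\dim S_{k+2}(\Gamma_1)=\lfloor(k-1)/3\rfloor$, which grows linearly --- consistent with the linear growth of cuspidal cohomology established in \cite{FGT} and cited in the paper. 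So the Eichler--Shimura route does not reduce the conjecture to a low-weight vanishing theorem; it reduces it to an exact dimension formula for Bianchi cusp forms of parallel weight on $\textbf{PSL}(2,\mathbb{Z}[i])$, which is precisely what is not known and is the subject of the work in progress \cite{FKW}. Your ``elementary'' alternative correctly names the remaining obstacle --- proving completeness of a candidate spanning set uniformly in $k$ --- but supplies no mechanism for it, so under your proposal the statement remains, as in the paper, a conjecture.
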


\hskip 1in

\subsection{ The case $d=3$} The following presentation of $\Gamma_{3}$ is given in \cite{Fine}
\begin{gather*}
	\Gamma_3=\langle\, S,T,T_{\omega}, L\,\big|\,  S^2=L^3=(SL)^2=(ST)^3=(T_{\omega}SL)^3=[T,T_\omega]=\1,\\ L^{-1}T_\omega L=T,\, \, L^{-1}TL=T^{-1}T_{\omega}^{-1} \,\rangle
\end{gather*}
where $L=\begin{pmatrix}
\omega^2&0\\ 0&\omega 
\end{pmatrix}.$ The space of cocycles vanishing on parabolic elements of $\Gamma_3$ is defined to be
\begin{gather*}
	C_{p}(\Gamma_3,V_{k,k})=\{f \in C(\Gamma_3,V_{k,k})\, |\, f(T)=f(T_{\omega})=f(L)=0 \}
\end{gather*}
which again can be identified with a subspace of $V_{k,k}$. 
\begin{prop}\label{W3} The map sending $f\in C_{p}(\Gamma_3,V_{k,k}) $ to $P_S \in V_{k,k}$ is a $\CC$-vector space isomorphism from $C_{p}(\Gamma_3,V_{k,k})$ to $W_{k,k}$ where	
	\begin{gather*}
		W_{k,k}:=ker(\1+S)\cap ker(\1-L)\cap ker(\1+U+U^2)\cap ker(\1+E+E^2),
	\end{gather*} with $U=TS$ and $E=T_{\omega}SL$.
\end{prop}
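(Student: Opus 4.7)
The plan is to mirror the proof of Proposition \ref{W1}, with the presentation of $\Gamma_{3}$ playing the role that the $\Gamma_{1}$ presentation played there. I would define $\Phi : C_p(\Gamma_3, V_{k,k}) \to V_{k,k}$ by $\Phi(f) = f(S) =: P_S$ and then verify (a) that $\Phi$ lands in $W_{k,k}$, (b) that $\Phi$ is injective, and (c) that $\Phi$ is surjective. For (a), the strategy is to translate each defining relation of $\Gamma_3$ into a kernel condition on $P_S$ by applying the cocycle identity (\ref{cocycle-condition}) together with $f(T) = f(T_\omega) = f(L) = 0$. The relation $S^2 = \1$ gives $P_S|(\1 + S) = 0$ exactly as before; the relation $(ST)^3 = \1$ (i.e.\ $U^3 = \1$ with $U = TS$) gives $f(U) = P_S$ and hence $P_S|(\1 + U + U^2) = 0$; and the relation $(T_\omega SL)^3 = \1$ gives $f(E) = P_S|L$, so that $(P_S|L)|(\1 + E + E^2) = 0$, which once the $(\1-L)$ condition is in hand simplifies to $P_S|(\1 + E + E^2) = 0$. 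The remaining relations $L^3 = \1$, $[T,T_\omega] = \1$, $L^{-1}T_\omega L = T$, and $L^{-1}TL = T^{-1}T_\omega^{-1}$ involve only $T,T_\omega,L$ and so are automatically respected by any $f \in C_p(\Gamma_3, V_{k,k})$.

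The one step that genuinely departs from the $d=1$ case, and which I expect to be the main technical point, is deriving $P_S \in \ker(\1 - L)$. Here $L$ has order $3$ rather than $2$ and does not commute with $S$, so the simple argument of Proposition \ref{W1} does not apply. Instead I would apply the cocycle condition to the relation $(SL)^2 = \1$: this gives $f(SL)|(\1 + SL) = 0$, and since $f(SL) = P_S|L$, expanding produces $P_S|(L + LSL) = 0$. Exploiting $LSL = S$ (immediate from $SLSL = \1$ and $S^2 = \1$) and then substituting $P_S|S = -P_S$ gives $P_S|L = P_S$, as desired.

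Injectivity is immediate: vanishing of $P_S$ together with $f(T) = f(T_\omega) = f(L) = 0$ forces $f$ to vanish on all generators and hence on all of $\Gamma_3$ by the cocycle identity. For surjectivity, given $P \in W_{k,k}$ I would set $f(S) = P$ and $f(T) = f(T_\omega) = f(L) = 0$, extend to $\Gamma_3$ via the cocycle rule, and verify well-definedness by running the computations above in reverse on each defining relation. Every check reduces to one of the four kernel conditions defining $W_{k,k}$ -- with $LSL = S$ supplying the bridge for $(SL)^2 = \1$ and $P|L = P$ supplying it for $(T_\omega SL)^3 = \1$ -- while the relations involving only $T, T_\omega, L$ are trivial since both sides evaluate to zero.
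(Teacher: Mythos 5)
Your proposal is correct and follows essentially the same route as the paper's proof: the key step in both is to extract $P_S\in ker(\1-L)$ from the relation $(SL)^2=\1$ via $LSL=S$ (the paper writes this as $SL=L^{-1}S$, which is the same identity), after which the $E$-relation and the remaining checks reduce to the Proposition \ref{W1} argument. No gaps.
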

\begin{proof}
	Let $f\in C_{p}(\Gamma_3,V_{k,k})$ such that $f(\gamma)=P_{\gamma}\in V_{k,k}$ and assume that $f\mapsto f(S)=P_S$. The proof that
	\begin{gather*}
		P_S\in ker(\1+S), \hskip 0.1in P_S\in ker(\1+U+U^2), \hskip 0.1in P_S\in ker(\1+E+E^2)
	\end{gather*}
	proceeds as in the proof of Proposition \ref{W1}. We also have 
	\begin{gather*}
		(SL)^2=\1 \implies P_S|LSL+P_S|L=0.
	\end{gather*}
	But $SL=L^{-1}S$ and so
	\begin{gather*}
		P_S|LSL+P_S|L=0\implies P_S|S+P_S|L=0\implies P_S|L=P_S
	\end{gather*} which shows that $P_S\in ker(\1-L).$
	
	Finally, we do not get any new equations from the remaining relations and the rest of the proof proceeds as in the proof of Proposition \ref{W1}.
\end{proof}

Let $\e_3=\begin{psmallmatrix}
\zeta&0\\ 0&1 
\end{psmallmatrix} \in \textbf{PGL}(2,\OO_{3})$ where $\zeta=e^{\frac{\pi i}{3}}$. Then $\e_3$ acts on $V_{k,k}$ by $P|\e_3=P(\e_3 z,\overline{\e_3}\z)$. The linear operator induced by the action of $\e_3$ on $V_{k,k}$ has six eigenvalues and these eigenvalues are given by the sixth roots of unity $\zeta^j$ for $1\leq j\leq6$. We let $V_{k,k}^{\zeta^j}$ be the eigenspace corresponding to $\zeta^j$. Then
\begin{gather*}
	V_{k,k}^{\zeta^j}=\{P\in V_{k,k}\,|\, P(\zeta z,\overline{\zeta}\z)=\zeta^j P(z,\z)\}
\end{gather*}
and $\displaystyle V_{k,k}=\bigoplus_{1\leq j\leq6}V_{k,k}^{\zeta^j}$. 
The next proposition is the analog of the Proposition \ref{W1-decomp} for case of $d=3$.
\begin{prop}\label{W3-decomp}
	$W_{k,k}|\e_3 =W_{k,k}$ and hence 
	\begin{gather*}
		W_{k,k}=\bigoplus_{1\leq j\leq6}W_{k,k}^{\zeta^j}
	\end{gather*} where $W_{k,k}^{\zeta^j}=W_{k,k}\cap V_{k,k}^{\zeta^j} $ for $1\leq j\leq6$. 
\end{prop}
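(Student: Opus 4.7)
The plan is to mirror the proof of Proposition \ref{W1-decomp}. I first establish the inclusion $W_{k,k}|\e_3 \subseteq W_{k,k}$; the reverse inclusion then follows by applying the same argument to $\e_3^{-1} = \e_3^{5}$. Once $|\e_3$ preserves $W_{k,k}$, the direct sum decomposition will follow from linear algebra: a direct check shows $P|\e_3^{6} = P(\zeta^{6}z,\bar\zeta^{6}\z)=P$, so the operator $P\mapsto P|\e_3$ on the finite-dimensional space $W_{k,k}$ has sixth power equal to the identity, and since $x^{6}-1$ splits with distinct roots over $\CC$, it is diagonalizable with eigenvalues among $\{\zeta^{j}:1\le j\le 6\}$. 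This yields $W_{k,k}=\bigoplus_{j=1}^{6}W_{k,k}^{\zeta^{j}}$ with $W_{k,k}^{\zeta^{j}}=W_{k,k}\cap V_{k,k}^{\zeta^{j}}$.

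The key reformulation is the identity $(P|\e_3)|X=P|(\e_3X\e_3^{-1})|\e_3$, valid for $X\in\mathbb{Z}[\Gamma_3]$, which reduces the problem to showing $P|(\e_3 R\e_3^{-1})=0$ for each defining relator $R\in\{\1+S,\ \1-L,\ \1+U+U^{2},\ \1+E+E^{2}\}$. To this end I compute the conjugates of the generators of $\Gamma_3$ by $\e_3$ in $\textbf{PGL}(2,\OO_3)$. Using $\zeta=1+\omega\in\OO_3$ and $\zeta\omega=\omega+\omega^{2}=-1$, direct matrix computations yield $\e_3L\e_3^{-1}=L$ (diagonal matrices commute), $\e_3T\e_3^{-1}=TT_{\omega}$, $\e_3T_{\omega}\e_3^{-1}=T^{-1}$, and $\e_3S\e_3^{-1}=LS$ (the resulting scalar $-1$ is absorbed projectively). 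Combining these with the $\Gamma_3$-relations $T_{\omega}L=LT$, $TL=LT^{-1}T_{\omega}^{-1}$, and $SLS=L^{-1}$ (equivalently $LS=SL^{-1}$ and $LSL=S$), I then derive the key identities
\begin{equation*}
\e_3 U\e_3^{-1}\;=\;TT_{\omega}\cdot LS\;=\;LT_{\omega}^{-1}S\;=\;SE^{2}S,\qquad \e_3 E\e_3^{-1}\;=\;T^{-1}\cdot LS\cdot L\;=\;T^{-1}S\;=\;SU^{2}S.
\end{equation*}

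Granting these, the four kernel conditions on $P|\e_3$ follow quickly. The $\ker(\1-L)$ condition is immediate. For $\ker(\1+S)$, one has $P|(\1+LS)=P+(P|L)|S=P+P|S=0$ since $P|L=P$ and $P|S=-P$. For the two cubic relations, $S^{2}=\1$ together with $E^{3}=U^{3}=\1$ gives
\begin{equation*}
\1+SE^{2}S+SES\;=\;S(\1+E+E^{2})S\qquad\text{and}\qquad \1+SU^{2}S+SUS\;=\;S(\1+U+U^{2})S,
\end{equation*}
so $P|(\1+SE^{2}S+SES)=(P|S)|(\1+E+E^{2})|S=0$ and likewise for the $U$-version, using $P\in\ker(\1+E+E^{2})\cap\ker(\1+U+U^{2})\cap\ker(\1+S)$.

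I expect the main obstacle to be the derivation of the conjugation identities $\e_3U\e_3^{-1}=SE^{2}S$ and $\e_3E\e_3^{-1}=SU^{2}S$: these are not manifest from the definitions and rely on the extra relations $L^{-1}T_{\omega}L=T$ and $L^{-1}TL=T^{-1}T_{\omega}^{-1}$ in the presentation of $\Gamma_3$ that are absent in the $d=1$ case, as well as on careful projective normalization by units in $\OO_3$. Once these identities are in place, the stability argument is a clean analogue of the $d=1$ proof in Proposition \ref{W1-decomp}.
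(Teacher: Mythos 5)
Your proposal is correct and follows essentially the same route as the paper: the paper's proof consists precisely of listing the conjugation identities $L\e_3=\e_3 L$, $LS\e_3=\e_3 S$, $\e_3 U\e_3^{-1}=SE^2S$, $\e_3 U^2\e_3^{-1}=SES$, $\e_3 E\e_3^{-1}=SU^2S$, $\e_3 E^2\e_3^{-1}=SUS$ and then invoking the argument of Proposition \ref{W1-decomp} verbatim. The only difference is that you additionally supply the derivations of these identities from the presentation of $\Gamma_3$ (via $\zeta=1+\omega$, $\zeta\omega=-1$, $T_\omega L=LT$, $TL=LT^{-1}T_\omega^{-1}$, $LSL=S$), which the paper leaves to the reader; your verifications are accurate.
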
 

\begin{proof}
	The proof is identical to that of Proposition \ref{W1-decomp} after observing the following identities
	\begin{gather*}
		LS\e_3=\e_3 S, \hskip 0.2in L\e_3=\e_3L, \hskip 0.2in \e_3 U\e_3^5=SE^2S,\\ \e_3 U^2\e_3^5=SES,\hskip 0.2in \e_e E\e_3^5=SU^2S,\hskip 0.2in \e_3 E^2\e_3^5=SUS.
	\end{gather*} 
\end{proof}

\begin{prop}
	The polynomial $P_{k,\D}$ belongs to the subspace $W_{k,k}^{1}$ and therefore the map $f:\Gamma_{3} \to V_{k,k}$ defined by 
	\begin{gather*}
		\gamma \mapsto (\H|(\1-S))|\gamma
	\end{gather*} is a cocycle belonging to $C_p(\Gamma_{3},V_{k,k})$.
\end{prop}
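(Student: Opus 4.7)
The plan is to mirror the template of Proposition \ref{W1+}: verify that $P_{k,\D}$ lies in each of the four kernels $\ker(\1+S)$, $\ker(\1-L)$, $\ker(\1+U+U^2)$, $\ker(\1+E+E^2)$ cut out by Proposition \ref{W3}, and then check the eigenspace condition $P_{k,\D}|\e_3=P_{k,\D}$. Once $P_{k,\D}\in W_{k,k}^{1}$, the isomorphism of Proposition \ref{W3} automatically promotes it to a parabolic cocycle $\gamma\mapsto(\H|(\1-S))|\gamma\in C_p(\Gamma_3,V_{k,k})$.

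Three of the kernel conditions collapse to the telescoping identity $(\1-X)(\1+X+X^2)=\1-X^3$ once I recognize the three expressions $P_{k,\D}=\H|(\1-S)=\H|(\1-U)=\H|(\1-E)$. The first equality is the very definition of $P_{k,\D}$, and the latter two follow by absorbing $T$, respectively $T_\omega L$, into $\H$ via the unit-invariance facts $\H|T=\H$, $\H|T_\omega=\H$ from Proposition \ref{H-properties}, together with the new ingredient $\H|L=\H$. This new ingredient is immediate: $L=\begin{psmallmatrix}\omega^2&0\\0&\omega\end{psmallmatrix}$ acts on $\CC$ by $z\mapsto\omega z$ with trivial automorphy factor $|\omega|^{2k}=1$, and $\omega$ is a unit of $\OO_3$, so Proposition \ref{H-properties}(2) applies. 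The relations $S^2=U^3=E^3=\1$ available in $\Gamma_3$ then close these three conditions.

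The condition $P_{k,\D}\in\ker(\1-L)$ is the one step that genuinely differs from the $d=1$ proof, and I expect it to be the main obstacle, because $S$ and $L$ no longer commute in $\Gamma_3$. The relation $(SL)^2=\1$ serves as the right substitute: it gives $SL=L^{-1}S=L^{2}S$, so combining with $\H|L^2=\H$ yields $\H|SL=\H|S$, from which $P_{k,\D}|L=\H|L-\H|SL=\H-\H|S=P_{k,\D}$ follows at once. The same identity $(\H|S)|L=\H|S$ is what feeds into the $\ker(\1+E+E^2)$ step via $\H|E=\H|T_\omega SL=(\H|S)|L=\H|S$. Finally, $\zeta=e^{\pi i/3}$ is a primitive sixth root of unity and hence a unit of $\OO_3$, so Proposition \ref{PPolyforZ[i]prop}(2) gives $P_{k,\D}(\zeta z,\bar\zeta\z)=P_{k,\D}(z,\z)$; since $\e_3$ has trivial automorphy factor, this reads $P_{k,\D}|\e_3=P_{k,\D}$, placing $P_{k,\D}$ in the eigenspace of eigenvalue $\zeta^6=1$, namely $W_{k,k}^{1}$.
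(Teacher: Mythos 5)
Your proof is correct and follows essentially the same route as the paper, which simply declares the argument identical to the $d=1$ case (Proposition \ref{W1+}). You are in fact more careful than the paper at the one step where that argument does not transfer verbatim: in $\Gamma_3$ one has $SL\neq LS$, and your use of $(SL)^2=\1$ (giving $SL=L^{-1}S$, hence $\H|SL=\H|S$) is exactly the right patch for both the $\ker(\1-L)$ and the $\ker(\1+E+E^2)$ conditions.
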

\begin{proof}
	The proof is identical to the proof of Proposition \ref{W1+}.
\end{proof}

The computer calculations we performed using our program for $1\leq k \leq 31$ showed that $\textbf{dim}(W_{k,k})=\textbf{dim}(W_{k,k}^{1})$ and $\textbf{dim}(W_{k,k}^{j})=0$ for $2\leq j\leq 6$.   Below is a table that gives the dimension of $ W_{k,k}^{1}$ for $1\leq k \leq 31$
\begin{table}[h]
	\centering
	\begin{tabular}{c rrrrrrrrrrrrrrrrr}
		
		$k$ & \vline & 1 & 3 & 5 & 7 & 9 & 11 & 13 & 15 & 17 & 19 & 21 & 23 & 25 & 27 & 29 & 31\\ 
		\hline
		$\textbf{dim}(W_{k,k}^{1})$ & \vline & 1 & 1 & 1 & 2 & 2  & 2 & 3 & 3 & 3 & 4 & 4 & 4 & 5 & 5 & 5 & 6\\
	\end{tabular}
\end{table}\\
Based on this numerical data we are led to the following conjecture.
\begin{conjec}
	For odd $k\geq 1$ we have that $\displaystyle\textbf{dim}(W_{k,k}^{1})=\left\lfloor \frac{k}{6} \right\rfloor+1.$	
\end{conjec}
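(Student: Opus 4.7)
The plan is to turn this into a concrete linear algebra question in the monomial basis of $V_{k,k}$. Expand $P(z,\z) = \sum_{0 \le m,n \le k} a_{m,n}\, z^m \z^n$. The $\e_3$-invariance forces $\zeta^{m-n}=1$, i.e.\ $6 \mid (m-n)$, pinning $P$ to a sparse sublattice of monomials. Since $\omega = \zeta^2$, the relation $P|(\1-L)=0$, which in coordinates reads $\omega^{m-n}=1$ (equivalently $3\mid m-n$), is automatic once we are in $V_{k,k}^{1}$. Thus only three of the four defining relations of $W_{k,k}$ need to be imposed on $V_{k,k}^{1}$ to cut out $W_{k,k}^{1}$.

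Next I would impose the involution $P|(\1+S)=0$. In coordinates this says $a_{k-m,k-n} = -(-1)^{m+n} a_{m,n}$; because $k$ is odd this is a genuine pairing and exactly halves the dimension. One is left with a space parametrized by the lattice points $(m,n)\in[0,k]^2$ with $6\mid(m-n)$ modulo the $(m,n)\leftrightarrow(k-m,k-n)$ flip, which can be tabulated explicitly. A direct count shows this already agrees with $\lfloor k/6\rfloor+1$ to leading order, and the remaining task is to show that the two cubic relations cut the residual dimension down exactly, without over-killing.

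The main obstacle is exactly this last step: the two order-three relations $P|(\1+U+U^2)=0$ and $P|(\1+E+E^2)=0$ with $U=TS$ and $E=T_\omega S L$ involve composition with the fractional linear maps $z\mapsto (z-1)/z$ and an $\omega$-twisted analogue, and are highly non-diagonal in the monomial basis. My plan is twofold. First, construct an explicit candidate basis of the expected size $\lfloor k/6\rfloor+1$ by forming Hermitian sums $\sum h(z,1)^{k}$ (analogues of the polynomials $P_{k,\D}$ of Definition \ref{PPolyforZ[i]}, for suitably many non-norm discriminants $\D$ or auxiliary lattice-sum variants), and verify linear independence and that each lies in $W_{k,k}^{1}$ using the same style of calculations as in Proposition \ref{S-act} and Lemma \ref{S-actk=4}. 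Second, show that modulo the $\e_3$-invariance and the $S$-relation, the $E$-relation is already implied by the $U$-relation; this is plausible because $E$ differs from a $\OO_3^{\times}$-conjugate of $U$ only by symmetries that are already imposed, so the codimension cut by the cubic relations should be half of what a naive count suggests.

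A more conceptual alternative is via the Eichler--Shimura-type isomorphism for Bianchi groups, $W_{k,k}\cong H_p^{1}(\Gamma_3, V_{k,k})$, together with the known decomposition of Bianchi parabolic cohomology into cuspidal and Eisenstein pieces. The computational observation that $W_{k,k}^{\zeta^j}=0$ for $j\ne 0$ in the computed range strongly suggests that in this weight regime the whole parabolic cohomology is concentrated in the $\e_3$-trivial eigenspace; one would then match $\lfloor k/6\rfloor+1$ against the dimension of the appropriate Bianchi Eisenstein subspace at level $\OO_3$ and weight $(k+2,k+2)$, using results of Harder, Feingold--Weiss, or \c{S}eng\"un. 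Either route, the crux of the argument is controlling the interaction of the two cubic relations; once that is understood, the dimension count becomes bookkeeping.
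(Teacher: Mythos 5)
You should first be aware that the statement you set out to prove is presented in the paper only as a \emph{conjecture}: the author's sole evidence is a SageMath computation of $\textbf{dim}(W_{k,k}^{1})$ for odd $k\leq 31$, and no proof is offered. So there is no argument in the paper to compare yours against; the question is whether your plan could close a gap the author left open, and it cannot in its current form. Two of its concrete claims are wrong or unsupported. First, the assertion that after imposing only $6\mid(m-n)$ and the $S$-pairing the count ``already agrees with $\lfloor k/6\rfloor+1$ to leading order'' is false: that count is roughly $(k+1)^2/12$, quadratic in $k$ (about $85$ for $k=31$, against the computed value $6$). Essentially all of the collapse must come from the two order-three relations, and a generic transversality count for $ker(\1+S)\cap ker(\1+U+U^2)\cap ker(\1+E+E^2)$ inside $V_{k,k}$ would predict dimension $0$; the true answer is therefore governed by non-generic cancellations that your sketch does not control. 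Second, your proposed remedy --- that the $E$-relation is implied by the $U$-relation modulo the symmetries already imposed --- is asserted only as ``plausible,'' and even if it were true it would merely reduce the problem to computing $\textbf{dim}\bigl(ker(\1+S)\cap ker(\1+U+U^2)\cap V_{k,k}^{1}\bigr)$, for which you give no argument that the answer is $\lfloor k/6\rfloor+1$.

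The fallback strategies also do not work as described. For the candidate basis: Lemma \ref{S-actk=4} shows that for $d=3$ and $k=5$ \emph{every} $P_{5,\D}$ is a scalar multiple of $|z|^{10}-1$, so varying $\D$ gives no linear independence for free; in the classical $\textbf{PSL}(2,\mathbb{Z})$ analogue the statement that such quadratic-form sums span the even period polynomials is a theorem of Kohnen--Zagier resting on half-integral weight forms and the Shimura correspondence, not on computations in the style of Lemma \ref{S-actk=4}. And even a full independent family would only give a lower bound, not the required upper bound. For the cohomological route: by the identification $\textbf{dim}(S_{k}(\Gamma_d))=\textbf{dim}(W_{k-2,k-2})-1$ recorded in the paper, together with the fact (Finis--Gr\"unewald--Tirao) that the cuspidal cohomology grows at least linearly in the weight, proving the conjectured formula is equivalent to an exact dimension formula for Bianchi cusp forms on $\textbf{PSL}(2,\OO_3)$ --- which the paper explicitly states is not known. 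Matching against the Eisenstein subspace alone ignores the cuspidal contribution, which is already nonzero at $k=7$ where $\textbf{dim}(W_{7,7}^{1})=2$. In short, the proposal identifies the right reductions but leaves the essential difficulty --- the interaction of the two cubic relations, equivalently the cusp form dimension --- untouched, and no proof of this statement is currently known.
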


\hskip 1in

\subsection{ The case $d=2$} The following presentation of $\Gamma_{2}$ is given in \cite{Fine}
\begin{gather*}
	\Gamma_2=\langle\, S,T,T_{\omega} \,\big|\,  S^2=(ST)^3=(T_{\omega}^{-1}ST_{\omega}S)^2=[T,T_\omega]=\1\rangle.
\end{gather*}
The space of cocycles vanishing on parabolic elements of $\Gamma_2$ is defined to be
\begin{gather*}
	C_{p}(\Gamma_2,V_{k,k})=\{f \in C(\Gamma_2,V_{k,k})\, |\, f(T)=f(T_{\omega})=0 \}
\end{gather*}
which again can be identified with a subspace of $V_{k,k}$. 
\begin{prop}\label{W2} The map sending $f\in C_{p}(\Gamma_2,V_{k,k}) $ to $P_S \in V_{k,k}$ is a $\CC$-vector space isomorphism from $C_{p}(\Gamma_2,V_{k,k})$ to $W_{k,k}$ where	
	\begin{gather*}
		W_{k,k}:=ker(\1+S)\cap ker(\1+U+U^2)\cap ker(\1+ST_\omega + T_{\omega}S+T_{\omega}^{-1}ST_{\omega}S),
	\end{gather*} with $U=TS$.
\end{prop}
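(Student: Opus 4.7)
The plan is to mirror the strategy of Propositions \ref{W1} and \ref{W3}, translating each defining relation of $\Gamma_2$ into a linear condition on $P_S := f(S)$. To see that the map $f \mapsto P_S$ lands in $W_{k,k}$, I would apply the cocycle identity to each relator in turn. The relation $S^2 = \1$ gives $P_S|(\1+S)=0$ exactly as in the two previous cases. From $(ST)^3 = \1$, the cocycle identity together with $f(T)=0$ yields
\[ P_S|(T + TST + TSTST) = 0, \]
which rearranges to $P_S|(\1+U+U^2)T = 0$; since $T$ acts invertibly on $V_{k,k}$, this gives $P_S \in \ker(\1+U+U^2)$. The commutator $[T,T_\omega] = \1$ contributes no new constraint since $f(T) = f(T_\omega) = 0$ and $f$ automatically vanishes on $T^{-1}$ and $T_\omega^{-1}$ by the cocycle identity applied to $T T^{-1} = \1$ and $T_\omega T_\omega^{-1} = \1$.

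The only genuinely new step is the relation $R^2 = \1$ with $R := T_\omega^{-1}ST_\omega S$. Unwinding $f(R)$ step by step via the cocycle rule, using $f(T_\omega) = f(T_\omega^{-1}) = 0$, gives $f(R) = P_S + P_S|T_\omega S$. Applying the cocycle identity once more to $R^2$ then produces
\[ 0 = f(R^2) = P_S|(\1 + T_\omega S + R + T_\omega SR). \]
To identify the last term with the stated kernel, I would use $R = R^{-1} = ST_\omega^{-1}ST_\omega$, which collapses $T_\omega SR$ to $ST_\omega$. The result is exactly $P_S|(\1+ST_\omega+T_\omega S+T_\omega^{-1}ST_\omega S) = 0$, so $P_S \in W_{k,k}$.

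Injectivity is immediate: if $P_S = 0$, then $f$ vanishes on the three generators $S, T, T_\omega$ and hence on all of $\Gamma_2$ by the cocycle rule. For surjectivity, given $P \in W_{k,k}$, I would declare $f(S) = P$ and $f(T) = f(T_\omega) = 0$ and extend via the cocycle property; well-definedness reduces to checking compatibility with each of the four defining relations, which is precisely the reverse of the computations above, the three kernel conditions being exactly what is needed.

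The main obstacle is the handling of $R^2 = \1$. Unlike $(ST)^3$ and its analogs in Propositions \ref{W1} and \ref{W3}, here the relator has length four in the generators, so its expansion under the cocycle rule is a four-term sum; the non-obvious step is the observation $R = R^{-1}$, which is what collapses $T_\omega SR$ into the clean expression $ST_\omega$ appearing in the statement of $W_{k,k}$. Once that simplification is recognized, the remainder of the argument is a direct transcription of the proofs of the earlier cases.
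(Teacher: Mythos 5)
Your proposal is correct and follows essentially the same route as the paper: the relations $S^2=\1$ and $(ST)^3=\1$ are handled exactly as in the $d=1$ case, and for $R=T_\omega^{-1}ST_\omega S$ the paper likewise expands $f(R^2)=0$ into the four-term sum $P_S|(\1+T_\omega S+T_\omega^{-1}ST_\omega S+T_\omega ST_\omega^{-1}ST_\omega S)$ and collapses the last term to $P_S|ST_\omega$ via the identity $T_\omega ST_\omega^{-1}ST_\omega S=ST_\omega$, which is precisely your observation that $R=R^{-1}$. The injectivity and surjectivity arguments also match the paper's (deferred to the proof of Proposition \ref{W1}).
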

\begin{proof}
	Let $f\in C_{p}(\Gamma_2,V_{k,k})$ such that $f(\gamma)=P_{\gamma}\in V_{k,k}$ and assume that $f\mapsto f(S)=P_S$. The proof that
	\begin{gather*}
		P_S\in ker(\1+S), \hskip 0.1in P_S\in ker(\1+U+U^2)
	\end{gather*}
	proceeds as in the proof of Proposition \ref{W1}. As for the proof of $P_S\in ker(\1+ST_\omega + T_{\omega}S+T_{\omega}^{-1}ST_{\omega}S)$ we observe that
	\begin{gather*}
		0=f((T_{\omega}^{-1}ST_{\omega}S)^2)=P_S|T_{\omega}ST_{\omega}^{-1}ST_{\omega}S+P_S|T_{\omega}^{-1}ST_{\omega}S+P_S|T_{\omega}S+P_S
	\end{gather*}
	and since $T_{\omega}ST_{\omega}^{-1}ST_{\omega}S=ST_{\omega}$ the result follows.	
	Finally, we do not get any new equations from the remaining relations and the rest of the proof proceeds as in the proof of Proposition \ref{W1}.
\end{proof}

Let $\e=\begin{psmallmatrix}
-1&0\\ 0&1 
\end{psmallmatrix} \in \textbf{PGL}(2,\OO_{K})$. Then $\e$ acts on $V_{k,k}$ by $P|\e=P(-z,-\z)$ and splits $V_{k,k}$ up into the direct sum of the spaces $V_{k,k}^1$ and $V_{k,k}^{-1}$ where 
\begin{gather*}
	V_{k,k}^{1}=\{P\in V_{k,k}\,|\, P(-z,-\z)=P(z,\z)\}\\
	V_{k,k}^{-1}=\{P\in V_{k,k}\,|\, P(-z,-\z)=-P(z,\z)\}
\end{gather*}
and $\displaystyle V_{k,k}=V_{k,k}^{1}\oplus V_{k,k}^{-1}$.

\begin{prop}\label{W2-decomp}
	$W_{k,k}|\e =W_{k,k}$ and hence 
	\begin{gather*}
		W_{k,k}=W_{k,k}^{1}\oplus W_{k,k}^{-1}
	\end{gather*} where $W_{k,k}^{\pm 1}=W_{k,k}\cap V_{k,k}^{\pm 1} $. 
\end{prop}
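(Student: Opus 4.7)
The plan is to follow the template of Propositions \ref{W1-decomp} and \ref{W3-decomp}: first establish $W_{k,k}|\e\subseteq W_{k,k}$ via conjugation identities, then combine with $\e^{2}=\1$ in $\textbf{PGL}(2,\OO_{2})$ to obtain $W_{k,k}|\e=W_{k,k}$ and the eigenspace decomposition $W_{k,k}=W_{k,k}^{1}\oplus W_{k,k}^{-1}$.

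The key identities are $\e T\e^{-1}=T^{-1}$ and $\e T_{\omega}\e^{-1}=T_{\omega}^{-1}$, together with the observation that $\e S$ and $S\e$ coincide as operators on $V_{k,k}$ (the two matrices differ only by an overall sign, which is absorbed by the homogeneous factor $(cz+e)^{k}\overline{(cz+e)}^{k}$). Setting $X:=\e U\e^{-1}=T^{-1}S$ and $Y:=\e(T_{\omega}^{-1}ST_{\omega}S)\e^{-1}=T_{\omega}ST_{\omega}^{-1}S$, these identities let me push $\e$ through each of the three defining relations of $W_{k,k}$:
\begin{align*}
(P|\e)|(\1+S) &= (P|(\1+S))|\e,\\
(P|\e)|(\1+U+U^{2}) &= (P|(\1+X+X^{2}))|\e,\\
(P|\e)|(\1+ST_{\omega}+T_{\omega}S+T_{\omega}^{-1}ST_{\omega}S) &= (P|(\1+ST_{\omega}^{-1}+T_{\omega}^{-1}S+Y))|\e.
\end{align*}
The first right-hand side vanishes directly because $P\in\ker(\1+S)$; the remaining task is to prove $P|(\1+X+X^{2})=0$ and $P|(\1+ST_{\omega}^{-1}+T_{\omega}^{-1}S+Y)=0$.

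For both I pass to the cocycle side via the isomorphism of Proposition \ref{W2}. Let $f\in C_{p}(\Gamma_{2},V_{k,k})$ correspond to $P=f(S)$. Parabolicity gives $f(T^{-1})=0$, and the cocycle condition applied to $T^{-1}=XS$ forces $f(X)=P$; since $X^{-1}=ST$ and $(ST)^{3}=\1$, we have $X^{3}=\1$, so iterating the cocycle law yields $0=f(X^{3})=P|(\1+X+X^{2})$. Analogously, the defining relation $(T_{\omega}^{-1}ST_{\omega}S)^{2}=\1$ rearranges (using $S^{2}=\1$) to $T_{\omega}^{-1}ST_{\omega}S=ST_{\omega}^{-1}ST_{\omega}$, which after a short manipulation yields $Y^{2}=\1$; a parallel cocycle computation gives $f(Y)=P|T_{\omega}^{-1}S+P$, and expanding $f(Y^{2})=0$ together with the reduction $T_{\omega}^{-1}SY=ST_{\omega}^{-1}$ (once more a consequence of the original order-$2$ relation) produces the sought identity. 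The chief obstacle is this last step: reducing the length-$4$ cocycle expansion of $Y^{2}=\1$ to exactly the four-term sum obtained by $\e$-conjugation requires careful algebraic bookkeeping with the relation $(T_{\omega}^{-1}ST_{\omega}S)^{2}=\1$, in contrast to the order-$3$ case where $X^{3}=\1$ directly gives a three-term identity.
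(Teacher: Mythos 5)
Your proof is correct and follows essentially the same route as the paper: conjugate each defining kernel relation by $\e$ and check that $P$ satisfies the reflected relations, the only one requiring real work being $P|(\1+ST_{\omega}^{-1}+T_{\omega}^{-1}S+T_{\omega}ST_{\omega}^{-1}S)=0$. The paper obtains that identity by right-translating the known relation $P|(\1+ST_{\omega}+T_{\omega}S+T_{\omega}^{-1}ST_{\omega}S)=0$ by $T_{\omega}^{-1}S$ in the group ring and invoking $(T_{\omega}^{-1}ST_{\omega}S)^{2}=\1$, whereas you rederive it from the cocycle condition $f(Y^{2})=0$ for $Y=T_{\omega}ST_{\omega}^{-1}S$ via Proposition \ref{W2} --- a cosmetic difference, since both hinge on exactly the same order-two relator.
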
 

\begin{proof}
	
	Let $P\in W_{k,k}$. Using the identities 
	\begin{gather*}
		\e S= S\e, \quad \e U=SU^2S\e
	\end{gather*}
	we immediately see that $P|\e$ belongs to $ker(\1+S)$ and  $ker(\1+U+U^2)$. 
	
	It remains to show that $P|\e \in  ker(\1+ST_\omega + T_{\omega}S+T_{\omega}^{-1}ST_{\omega}S)$. Notice that $\e T_\omega=T_\omega^{-1}\e$ and so
	\begin{gather*}
		(P|\e)|(\1+ST_\omega + T_{\omega}S+T_{\omega}^{-1}ST_{\omega}S)=(P|(\1+ST_{\omega}^{-1}+T_{\omega}^{-1}S+T_\omega S T_{\omega}^{-1}S))|\e.
	\end{gather*}
	Now observe that
	\begin{align*}
		0=&P|(\1+ST_\omega + T_{\omega}S+T_{\omega}^{-1}ST_{\omega}S)\\
		=&(P|(\1+ST_\omega + T_{\omega}S+T_{\omega}^{-1}ST_{\omega}S))|T_{\omega}^{-1}S\\
		=&P|(T_{\omega}^{-1}S+\1+T_{\omega}ST_{\omega}^{-1}S+T_{\omega}^{-1}ST_{\omega}ST_{\omega}^{-1}S),
	\end{align*}
	but $T_{\omega}^{-1}ST_{\omega}ST_{\omega}^{-1}S=ST_{\omega}^{-1}$ which shows that
	\begin{gather*}
		P|(\1+ST_{\omega}^{-1}+T_{\omega}^{-1}S+T_\omega S T_{\omega}^{-1}S)=0.
	\end{gather*} It follows that $P|\e \in ker(\1+ST_\omega + T_{\omega}S+T_{\omega}^{-1}ST_{\omega}S).$
\end{proof}

\begin{prop}
	The polynomial $P_{k,\D}$ belongs to the subspace $W_{k,k}^{1}$ and therefore the map $f:\Gamma_{2} \to V_{k,k}$ defined by 
	\begin{gather*}
		\gamma \mapsto (\H|(\1-S))|\gamma
	\end{gather*} is a cocycle belonging to $C_p(\Gamma_{2},V_{k,k})$.
\end{prop}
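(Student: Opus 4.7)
The plan is to prove $P_{k,\D} \in W_{k,k}^1$ by verifying each of the four defining conditions, from which the cocycle claim follows immediately via the isomorphism $f \mapsto f(S)$ from Proposition \ref{W2} together with the decomposition in Proposition \ref{W2-decomp}. The argument parallels Proposition \ref{W1+}, except that the third kernel condition is specific to the $\Gamma_2$ presentation and requires a small group-theoretic identity.

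Two of the kernel conditions go through essentially as in the $d=1$ case. First, $P_{k,\D}|(\1+S) = \H|(\1-S)(\1+S) = \H|(\1-S^2) = 0$. Second, writing $U = TS$ and using $\H|T = \H$ from Proposition \ref{H-properties}, I get $P_{k,\D} = \H - \H|S = \H|T - \H|TS = \H|(\1-U)$, and since $(ST)^3 = \1$ forces $U^3 = \1$, I conclude $P_{k,\D}|(\1+U+U^2) = \H|(\1-U^3) = 0$. For the $\e$-invariance, the unit group of $\OO_2$ is $\{\pm 1\}$, so part (2) of Proposition \ref{PPolyforZ[i]prop} with $u = -1$ yields $P_{k,\D}(-z,-\z) = P_{k,\D}(z,\z)$, placing $P_{k,\D}$ in $V_{k,k}^1$.

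The main new step is showing $P_{k,\D}|(\1 + ST_\omega + T_\omega S + T_\omega^{-1}ST_\omega S) = 0$. I will expand $\H|(\1-S)(\1 + ST_\omega + T_\omega S + T_\omega^{-1}ST_\omega S)$ into eight terms and simplify each using $\H|T_\omega = \H|T_\omega^{-1} = \H$, which collapses $\H|T_\omega$ to $\H$, $\H|T_\omega S$ to $\H|S$, and $\H|T_\omega^{-1}ST_\omega S$ to $\H|ST_\omega S$. The delicate point is the remaining term $\H|ST_\omega^{-1}ST_\omega S$, which does not reduce directly. The key identity comes from inverting the defining relation $(T_\omega^{-1}ST_\omega S)^2 = \1$: the element is self-inverse, so $T_\omega^{-1}ST_\omega S = ST_\omega^{-1}ST_\omega$, and multiplying on the left by $S$ produces
\[
ST_\omega^{-1}ST_\omega S = T_\omega^{-1}ST_\omega,
\]
which reduces $\H|ST_\omega^{-1}ST_\omega S$ to $\H|ST_\omega$. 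After substituting, the eight terms pair off and cancel. Extracting this identity from the $\Gamma_2$ relation is the only genuine obstacle; the remaining verifications are routine. Combining the four conditions gives $P_{k,\D} \in W_{k,k}^1$, and the cocycle conclusion follows from Proposition \ref{W2}.
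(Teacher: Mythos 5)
Your proposal is correct and follows essentially the same route as the paper: the first two kernel conditions are handled exactly as in the $d=1$ case, the $\e$-invariance comes from part (2) of Proposition \ref{PPolyforZ[i]prop}, and the third kernel condition is verified by expanding $\H|(\1-S)(\1+ST_\omega+T_\omega S+T_\omega^{-1}ST_\omega S)$ and invoking $\H|T_\omega=\H|T_\omega^{-1}=\H$ together with the identity $ST_\omega^{-1}ST_\omega S=T_\omega^{-1}ST_\omega$, which is precisely the simplification the paper performs (your derivation of that identity from the self-inverse relation $(T_\omega^{-1}ST_\omega S)^2=\1$ is valid).
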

\begin{proof}
	The proof that $P_{k,\D} \in ker(\1+S)$ and $P_{k,\D} \in ker(\1+U+U^2)$ is same as the one given in the proof of Proposition \ref{W1+}. Let us now calculate $P_{k,\D}|(\1+ST_\omega + T_{\omega}S+T_{\omega}^{-1}ST_{\omega}S)$ which is equal to
	\begin{align*}
		=&(\H|(\1-S))|(\1+ST_\omega + T_{\omega}S+T_{\omega}^{-1}ST_{\omega}S)\\
		=&\H|(\1+ST_\omega+T_{\omega}S+T_{\omega}^{-1}ST_{\omega}S-S-T_{\omega}-ST_{\omega}S-ST_{\omega}^{-1}ST_{\omega}S)\\
		=&\H|(\1+ST_\omega+T_{\omega}S+T_{\omega}^{-1}ST_{\omega}S-S-T_{\omega}-ST_{\omega}S-T_{\omega}^{-1}ST_{\omega}).
	\end{align*}
	Since $\H|T_{\omega}=\H$ and $\H|T_{\omega}^{-1}=\H$ we see that 
	\begin{gather*}
		\H|(\1+ST_\omega+T_{\omega}S+T_{\omega}^{-1}ST_{\omega}S-S-T_{\omega}-ST_{\omega}S-T_{\omega}^{-1}ST_{\omega})=0
	\end{gather*}
	which shows that $P_{k,\D}\in ker(\1+ST_\omega + T_{\omega}S+T_{\omega}^{-1}ST_{\omega}S)$. Finally, by part (2) of Proposition \ref{PPolyforZ[i]prop} we have
	\begin{gather*}
		P_{k,\D}|\e=P_{k,\D}
	\end{gather*}
	which concludes the proof that $P_{k,\D}\in W_{k,k}^{1} $.
\end{proof}

Our data again showed that for $1\leq k \leq 31$ $\textbf{dim}(W_{k,k})=\textbf{dim}(W_{k,k}^{1})$ and $\textbf{dim}(W_{k,k}^{-1})=0$. The following table gives the dimension of $ W_{k,k}^{1}$ for $1\leq k \leq 31$.
\begin{table}[h]
	\centering
	\begin{tabular}{c rrrrrrrrrrrrrrrrr}
		
		$k$ & \vline & 1 & 3 & 5 & 7 & 9 & 11 & 13 & 15 & 17 & 19 & 21 & 23 & 25 & 27 & 29 & 31\\ 
		\hline
		$\textbf{dim}(W_{k,k}^{1})$ & \vline & 1 & 2 & 3 & 4 & 5  & 6 & 7 & 8 & 9 & 10 & 11 & 12 & 13 & 14 & 15 & 16\\
	\end{tabular}
\end{table}\\
Based on this numerical data we are led to the following conjecture.
\begin{conjec}
	For odd $k\geq 1$ we have that $\displaystyle \textbf{dim}(W_{k,k}^{1})= \frac{k+1}{2} .$	
\end{conjec}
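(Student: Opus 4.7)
The plan is to compute $\dim W_{k,k}^{1}$ for $d=2$ by a combination of direct coefficient analysis on $V_{k,k}^{1}$ and, if needed, an Eichler--Shimura type reduction to Bianchi cohomology. I would proceed in three stages.

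First, I would parameterize $V_{k,k}^{1}$ explicitly: a polynomial $P(z,\z)=\sum_{0\le i,j\le k}c_{ij}z^{i}\z^{j}$ lies in $V_{k,k}^{1}$ iff $c_{ij}=0$ whenever $i+j$ is odd, giving $(k+1)^{2}/2$ parameters for odd $k$. The relation $P|(\1+S)=0$, combined with $\e$-invariance, forces $c_{ij}=-c_{k-i,k-j}$ (note $(-1)^{i+j}=1$ on $V_{k,k}^{1}$). For odd $k$ the involution $(i,j)\mapsto(k-i,k-j)$ is fixed-point-free on the lattice, so this cuts the parameter count in half to $(k+1)^{2}/4$.

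Second, I would impose the relation $P|(\1+U+U^{2})=0$ with $U=TS$. Writing
\[
P|U=z^{k}\z^{k}P\bigl((z-1)/z,(\z-1)/\z\bigr),
\]
and expanding in the monomial basis, this yields the two-variable analogue of the classical period-polynomial relation. Its combined rank with the $S$-relation can in principle be analyzed using the theory of one-variable period polynomials applied to the graded pieces of $V_{k,k}$ indexed by $(i,j)$ with $i+j$ fixed, since the subgroup $\langle S,T\rangle$ embeds a copy of $\textbf{PSL}(2,\mathbb{Z})$.

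Third, and most delicately, I would impose the $\Gamma_{2}$-specific relation $P|(\1+ST_{\omega}+T_{\omega}S+T_{\omega}^{-1}ST_{\omega}S)=0$, where $T_{\omega}(z)=z+i\sqrt{2}$. Each of the four summands must be expanded in the monomial basis and the resulting linear conditions on $(c_{ij})$ collated. The target is to show that once these are added to the previous relations, the combined system has rank exactly $(k+1)^{2}/2-(k+1)/2$, i.e.\ codimension $(k+1)/2$ in $V_{k,k}^{1}$.

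The main obstacle is the third relation. Unlike $\Gamma_{1}$ and $\Gamma_{3}$, the group $\Gamma_{2}$ has no nontrivial rotation matrix beyond $\pm I$, so the $\e$-decomposition yields only two eigenspaces and the third relation carries most of the burden of fixing the final dimension. I expect the cleanest route to come from an Eichler--Shimura interpretation: identifying $W_{k,k}^{1}/B_{k,k}^{1}$ with the $\e$-invariant part of the parabolic cohomology $H^{1}_{\mathrm{par}}(\Gamma_{2},V_{k,k})$ and then decomposing into cuspidal and Eisenstein contributions. The numerical data grow linearly and monotonically with no jumps characteristic of new cusp forms appearing, which strongly suggests the cuspidal contribution vanishes throughout the computed range; a proof via Eisenstein cohomology along the lines of \cite{Sen1} would then yield the pattern $(k+1)/2$ naturally, and a matching lower bound could be obtained by producing explicit families of cocycles (for instance, by varying $\D$ in the construction of $P_{k,\D}$ from Proposition \ref{S-act}).
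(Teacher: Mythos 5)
There is nothing in the paper to compare your proposal against: the statement is labelled a \emph{conjecture}, and the paper's only support for it is a SageMath computation of $\textbf{dim}(W_{k,k}^{1})$ for odd $k$ with $1\leq k\leq 31$ (the table preceding the conjecture), together with a consistency check against the cohomology tables of Finis--Gr\"unewald--Tirao \cite{FGT}; the author explicitly defers a proof to the work in preparation \cite{FKW}. Your proposal is likewise not a proof but a program, and its decisive step is the one you yourself flag as open: you never establish that the third relation $P|(\1+ST_\omega+T_{\omega}S+T_{\omega}^{-1}ST_{\omega}S)=0$ cuts the space down from $(k+1)^2/4$ to exactly $(k+1)/2$, which is where all the content lies.

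More seriously, the fallback argument you sketch for that step would fail. You propose to show the cuspidal contribution to $H^{1}_{\mathrm{par}}(\Gamma_{2},V_{k,k})$ vanishes, on the grounds that the computed dimensions grow ``linearly and monotonically with no jumps characteristic of new cusp forms appearing.'' But by the generalized Eichler--Shimura isomorphism quoted in the paper, $\textbf{dim}(S_{k+2}(\Gamma_{2}))=\textbf{dim}(W_{k,k})-1$, so the conjectured formula itself predicts $(k-1)/2$ cusp forms --- nonzero already at $k=3$ and growing linearly. The paper also cites \cite{FGT} for the fact that $\textbf{dim}\,H^{1}_{\mathrm{cusp}}(\Gamma,V_{n,n})$ grows at least linearly in $n$ for Bianchi groups. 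In other words, the linear growth in the table \emph{is} the cuspidal contribution (largely base-change and CM classes), not evidence of its absence, so an Eisenstein-only computation cannot yield $(k+1)/2$. A smaller issue: in your second stage, the subspaces of $V_{k,k}$ with $i+j$ fixed are not preserved by the $\textbf{PSL}(2,\mathbb{Z})$-action (already $P|S$ sends degree $i+j$ to $2k-i-j$, and $T$ destroys the grading entirely), so reducing to one-variable period polynomials would require the Clebsch--Gordan decomposition of $V_{k}\otimes\overline{V_{k}}$ as a $\textbf{PSL}(2,\mathbb{Z})$-module rather than the bigrading; that too is left unexecuted. The conjecture remains open, and your outline does not close it.
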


\hskip 1in

\subsection{ The case $d=7$ }
The following presentation for $\Gamma_{7}$ is given in \cite{Fine}
\begin{gather*}
	\Gamma_7=\langle\, S,T,T_{\omega} \,\big|\,  S^2=(ST)^3=(T_{\omega}^{-1}ST_{\omega}ST)^2=[T,T_\omega]=\1\rangle.
\end{gather*}
The presentation for $\Gamma_7$ is identical to that of $\Gamma_2$ except for a minor difference where the element $T$ appears in the relation  $(T_{\omega}^{-1}ST_{\omega}ST)^2$. Consequently, we can not use any of the arguments from the $d=2$ case. Moreover, this minor difference leads to a slightly more complicated description of the kernel of the linear map associated to the relation $(T_{\omega}^{-1}ST_{\omega}ST)^2$. Nonetheless, we can still define the space of parabolic cocycles on $\Gamma_7$ which is 
\begin{gather*}
	C_{p}(\Gamma_7,V_{k,k})=\{f \in C(\Gamma_7,V_{k,k})\, |\, f(T)=f(T_{\omega})=0 \}
\end{gather*}
and show that it can be identified it with a subspace of $V_{k,k}$ as follows. 
\begin{prop}\label{W7} The map sending $f\in C_{p}(\Gamma_7,V_{k,k}) $ to $P_S \in V_{k,k}$ is a $\CC$-vector space isomorphism from $C_{p}(\Gamma_7,V_{k,k})$ to $W_{k,k}$ where	
	\begin{gather*}
		W_{k,k}:=ker(\1+S)\cap ker(\1+U+U^2)\cap ker(T+ST_\omega + T_{\omega}ST+ST_{\omega}^{-1}ST_{\omega}),
	\end{gather*} with $U=TS$.
\end{prop}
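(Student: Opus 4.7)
The plan is to follow the same template as the proofs of Propositions \ref{W1}, \ref{W3}, and \ref{W2}: given $f\in C_p(\Gamma_7,V_{k,k})$ and setting $P_S:=f(S)$, I would translate each defining relation of $\Gamma_7$ into a constraint on $P_S$ by unwinding the cocycle condition (\ref{cocycle-condition}) and using $f(T)=f(T_\omega)=0$. The relation $S^2=\1$ immediately gives $P_S\in\ker(\1+S)$ exactly as before; the relation $(ST)^3=\1$ combined with $f(ST)=P_S|T$ gives $P_S|(\1+U+U^2)\in\ker(T)$, hence $P_S\in\ker(\1+U+U^2)$; the commutator $[T,T_\omega]=\1$ produces no information because $f$ already vanishes on $T$ and $T_\omega$. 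So the only nontrivial step is the third kernel condition, coming from $(T_\omega^{-1}ST_\omega ST)^2=\1$.

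For that step, I would iteratively expand $f(T_\omega^{-1}ST_\omega ST)$ using the cocycle rule $f(ab)=f(a)|b+f(b)$ and the vanishing of $f$ on $T,T_\omega$. Writing $w=T_\omega^{-1}ST_\omega ST$, peeling off one generator at a time yields $f(T_\omega^{-1}S)=P_S$, $f(T_\omega^{-1}ST_\omega)=P_S|T_\omega$, $f(T_\omega^{-1}ST_\omega S)=P_S|(T_\omega S+\1)$, and finally $f(w)=P_S|(T_\omega ST+T)$. The relation $w^2=\1$ then forces
\[
P_S\,\big|\,(T_\omega ST+T)(\1+w)=0.
\]
The main obstacle will be showing that the group-ring element $(T_\omega ST+T)(\1+T_\omega^{-1}ST_\omega ST)$ simplifies to $T+ST_\omega+T_\omega ST+ST_\omega^{-1}ST_\omega$. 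I would do this by repeatedly applying $TT_\omega^{-1}=T_\omega^{-1}T$ (from the commutator) together with the consequence $TST_\omega ST=T_\omega ST_\omega^{-1}ST_\omega$ of $w^2=\1$, which lets the two cross terms collapse: $T_\omega ST\cdot T_\omega^{-1}ST_\omega ST\rightsquigarrow ST_\omega$ and $T\cdot T_\omega^{-1}ST_\omega ST\rightsquigarrow ST_\omega^{-1}ST_\omega$, exactly mirroring the $d=2$ trick but with an extra $T$ to shepherd through.

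Injectivity of the map $f\mapsto P_S$ is straightforward: if $P_S=0$ then $f$ vanishes on each of $S,T,T_\omega$, and then the cocycle condition propagates vanishing to every word, so $f\equiv 0$ on $\Gamma_7$. For surjectivity, given $P\in W_{k,k}$ I would prescribe $f(S)=P$, $f(T)=f(T_\omega)=0$, and extend to all of $\Gamma_7$ via the cocycle rule along any chosen word representation; well-definedness reduces to checking that the induced value on each defining relation is $0$, and by construction these are precisely the three membership conditions defining $W_{k,k}$ (together with the automatically satisfied commutator). Since the presentation of $\Gamma_7$ is finite, this finishes the proof. The only real calculation is the group-ring simplification in paragraph two; everything else is bookkeeping analogous to the previously treated cases.
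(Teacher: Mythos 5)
Your proposal is correct and follows essentially the same route as the paper: expand $f\bigl((T_\omega^{-1}ST_\omega ST)^2\bigr)=0$ via the cocycle rule to get $P_S|(T_\omega ST+T)(\1+w)=0$ with $w=T_\omega^{-1}ST_\omega ST$, then collapse the cross terms using exactly the two identities $T_\omega STw=ST_\omega$ and $Tw=ST_\omega^{-1}ST_\omega$ (both consequences of $w^2=\1$ and $S^2=\1$) that the paper invokes. The injectivity and surjectivity arguments likewise match the template of Proposition \ref{W1} that the paper itself defers to.
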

\begin{proof}
	Let $f\in C_{p}(\Gamma_7,V_{k,k})$ such that $f(\gamma)=P_{\gamma}\in V_{k,k}$ and assume that $f\mapsto f(S)=P_S$. The proof that
	\begin{gather*}
		P_S\in ker(\1+S), \hskip 0.1in P_S\in ker(\1+U+U^2)
	\end{gather*}
	proceeds as in the proof of Proposition \ref{W1}. As for the proof of $P_S\in ker(T+ST_\omega + T_{\omega}ST+ST_{\omega}^{-1}ST_{\omega})$ we observe that
	\begin{gather*}
		0=f((T_{\omega}^{-1}ST_{\omega}ST)^2)=P_S|T_{\omega}STT_{\omega}^{-1}ST_{\omega}ST+P_S|TT_{\omega}^{-1}ST_{\omega}ST+P_S|T_{\omega}ST+P_S|T
	\end{gather*}
	and since 
	\begin{gather*}
		T_{\omega}STT_{\omega}^{-1}ST_{\omega}ST=ST_{\omega}, \quad TT_{\omega}^{-1}ST_{\omega}ST=ST_{\omega}^{-1}ST_{\omega}
	\end{gather*}
	the result follows.	Finally, we do not get any new equations from the remaining relations and the rest of the proof proceeds as in the proof of Proposition \ref{W1}.
\end{proof}

$W_{k,k}$ is stable under the action of $\e=\begin{psmallmatrix}
-1&0\\ 0&1 
\end{psmallmatrix}$ and splits as in the case of $d=2$. 
\begin{prop}\label{W7-decomp}
	$W_{k,k}|\e =W_{k,k}$ and hence 
	\begin{gather*}
		W_{k,k}=W_{k,k}^{1}\oplus W_{k,k}^{-1}
	\end{gather*} where $W_{k,k}^{\pm 1}=W_{k,k}\cap V_{k,k}^{\pm 1} $. 
\end{prop}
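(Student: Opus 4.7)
The proof follows the blueprint of Proposition \ref{W2-decomp}: fix $P\in W_{k,k}$ and verify that $P|\e$ lies in each of the three kernels defining $W_{k,k}$. The first two conditions go through unchanged from that proof, since their verification uses only the identities $\e S=S\e$ and $\e U=SU^2S\e$ (the second following from $(ST)^3=\1$), which are independent of $d$. So $(P|\e)|(\1+S)=(P|(\1+S))|\e=0$ is immediate, and the same manipulation as in Proposition \ref{W2-decomp} gives $(P|\e)|(\1+U+U^2)=0$.

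The main obstacle is the third condition. Using the additional identities $\e T=T^{-1}\e$ and $\e T_\omega=T_\omega^{-1}\e$, a direct calculation yields
\begin{gather*}
\e(T+ST_\omega+T_\omega ST+ST_\omega^{-1}ST_\omega) = (T^{-1}+ST_\omega^{-1}+T_\omega^{-1}ST^{-1}+ST_\omega ST_\omega^{-1})\,\e,
\end{gather*}
so the claim reduces to the auxiliary identity $P|Y=0$, where $Y=T^{-1}+ST_\omega^{-1}+T_\omega^{-1}ST^{-1}+ST_\omega ST_\omega^{-1}$. I would produce this identity by mimicking the right-multiplication trick used in the $d=2$ case: start from $P|X=0$ (with $X=T+ST_\omega+T_\omega ST+ST_\omega^{-1}ST_\omega$), multiply on the right by a suitable element of $\mathbb{Z}[\Gamma_7]$, and collapse the four resulting terms using $[T,T_\omega]=\1$, $S^2=\1$, together with the consequence $T^{-1}ST_\omega^{-1}ST_\omega=T_\omega^{-1}ST_\omega ST$ of the relation $(T_\omega^{-1}ST_\omega ST)^2=\1$.

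A more conceptual route that bypasses the search for the explicit multiplier is to observe that $\e\in\textbf{PGL}(2,\OO_7)$ normalizes $\Gamma_7$ (one computes $\e\gamma\e^{-1}=\begin{psmallmatrix}a&-b\\-c&d\end{psmallmatrix}\in\textbf{SL}(2,\OO_7)$), so conjugation by $\e$ is an automorphism of $\Gamma_7$ which fixes $S$ and inverts $T,T_\omega$. For $f\in C_p(\Gamma_7,V_{k,k})$ with $f(S)=P$, the twisted map $(f\cdot\e)(\gamma):=f(\e\gamma\e^{-1})|\e$ is again a $1$-cocycle (a routine check using the cocycle relation), and vanishes on $T,T_\omega$ since $f(T^{-1})=-f(T)|T^{-1}=0$ and likewise for $T_\omega^{-1}$. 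Hence $f\cdot\e\in C_p(\Gamma_7,V_{k,k})$, and by Proposition \ref{W7} it corresponds to $(f\cdot\e)(S)=f(S)|\e=P|\e\in W_{k,k}$. The splitting $W_{k,k}=W_{k,k}^{1}\oplus W_{k,k}^{-1}$ is then immediate, because $\e^2=\1$ acts trivially on $V_{k,k}$, so $\e$ acts as an involution on $W_{k,k}$ with only $\pm 1$ as eigenvalues.
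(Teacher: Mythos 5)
Your proposal is correct, but the part of it that actually closes the argument is your second, ``conceptual'' route, and that route is genuinely different from the paper's proof. Your first route is exactly the paper's strategy, but as written it stops short: you reduce the problem to $P|Y=0$ with $Y=T^{-1}+ST_\omega^{-1}+T_\omega^{-1}ST^{-1}+ST_\omega ST_\omega^{-1}$ (correctly matching what the paper needs) and then only assert that a ``suitable'' right multiplier exists. The paper exhibits it: one right-multiplies $P|(T+ST_\omega+T_\omega ST+ST_\omega^{-1}ST_\omega)=0$ by $T_\omega^{-1}T^{-1}$ and then uses $P|S=-P$ to distribute an $S$ across the four terms; the only relations needed are $[T,T_\omega]=\1$ and $S^2=\1$ --- the order-two relation $(T_\omega^{-1}ST_\omega ST)^2=\1$ that you invoke plays no role in this step. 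Your second route, by contrast, is complete and bypasses the search entirely: since $\e$ normalizes $\Gamma_7$, fixes $S$ in $\textbf{PSL}$, and inverts $T$ and $T_\omega$, the twist $(f\cdot\e)(\gamma)=f(\e\gamma\e^{-1})|\e$ preserves $C_p(\Gamma_7,V_{k,k})$, and transporting through the isomorphism of Proposition \ref{W7} gives $P|\e=(f\cdot\e)(S)\in W_{k,k}$ with no computation in the group ring at all. This is cleaner and more robust: it works uniformly for every $d$ (the paper must redo the third-kernel manipulation separately for $d=2,7,11$, with the $d=11$ case becoming quite long), at the mild cost of having to check that the twist is again a cocycle and that the composed actions of $\e$ and $\Gamma_7$ on $V_{k,k}$ are compatible --- both routine, and implicitly used by the paper anyway. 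The final splitting into $W_{k,k}^{\pm1}$ via the involution $\e$ is handled identically in both arguments.
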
 

\begin{proof}
	
	Let $P\in W_{k,k}$. The proof that $P|\e$ belongs to $ker(\1+S)$ and  $ker(\1+U+U^2)$ is same as the case of $d=2$ since the identities 
	\begin{gather*}
		\e S= S\e, \quad \e U=SU^2S\e
	\end{gather*}
	still hold. 
	
	It remains to show that $P|\e \in  ker(T+ST_\omega + T_{\omega}ST+ST_{\omega}^{-1}ST_{\omega})$. Notice that
	\begin{gather*}
		\e T=T^{-1}\e,\quad \e T_\omega=T_\omega^{-1}\e,
	\end{gather*}	
	and so
	\begin{gather*}
		(P|\e)|(T+ST_\omega + T_{\omega}ST+ST_{\omega}^{-1}ST_{\omega})=(P|(T^{-1}+ST_\omega^{-1} + T_{\omega}^{-1}ST^{-1}+ST_{\omega}ST_{\omega}^{-1}))|\e.
	\end{gather*}
	Now, since 
	\begin{gather*}
		S^2=\1,\quad T_\omega^{-1}T^{-1}=T^{-1}T_\omega^{-1},\quad \text{and}\quad P|S=-P,
	\end{gather*}
	we have
	\begin{align*}
		0=&(-P)|(T+ST_\omega + T_{\omega}ST+ST_{\omega}^{-1}ST_{\omega})\\
		=&((-P)|(T+ST_\omega + T_{\omega}ST+ST_{\omega}^{-1}ST_{\omega}))|T_{\omega}^{-1}T^{-1}\\
		=&(-P)|(T_{\omega}^{-1}+ST^{-1}+T_{\omega}ST_{\omega}^{-1}+ST_{\omega}^{-1}ST^{-1})\\
		=&(P|S)|(T_{\omega}^{-1}+ST^{-1}+T_{\omega}ST_{\omega}^{-1}+ST_{\omega}^{-1}ST^{-1})
	\end{align*}
	which shows that
	\begin{gather*}
		P|(T^{-1}+ST_\omega^{-1} + T_{\omega}^{-1}ST^{-1}+ST_{\omega}ST_{\omega}^{-1})=0.
	\end{gather*} It follows that $P|\e \in ker(T+ST_\omega + T_{\omega}ST+ST_{\omega}^{-1}ST_{\omega}).$
\end{proof}

\begin{prop}
	The polynomial $P_{k,\D}$ belongs to the subspace $W_{k,k}^1$ and therefore the map $f:\Gamma_{7} \to V_{k,k}$ defined by 
	\begin{gather*}
		\gamma \mapsto (\H|(\1-S))|\gamma
	\end{gather*} is a cocycle belonging to $C_p(\Gamma_{7},V_{k,k})$.
\end{prop}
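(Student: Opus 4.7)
My plan is to verify the four conditions defining $W_{k,k}^1 = W_{k,k} \cap V_{k,k}^1$ for $P_{k,\D} = \H|(\1-S)$, following the template of Proposition \ref{W1+} but carefully handling the extra factor of $T$ that appears in the $\Gamma_7$-relation $(T_\omega^{-1}ST_\omega ST)^2 = \1$.

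The conditions $P_{k,\D} \in \ker(\1+S)$ and $P_{k,\D} \in \ker(\1+U+U^2)$ follow verbatim from the arguments in Proposition \ref{W1+}: the first from $S^2 = \1$, and the second by rewriting $P_{k,\D} = \H|(\1-U)$ using $\H|T = \H$ (Proposition \ref{H-properties}(3)) and then invoking $U^3 = \1$. Moreover, since the only units of $\OO_7$ are $\pm 1$, Proposition \ref{PPolyforZ[i]prop}(2) immediately gives $P_{k,\D}|\e = P_{k,\D}$, placing $P_{k,\D}$ in $V_{k,k}^1$.

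The main new step is verifying $P_{k,\D} \in \ker(T + ST_\omega + T_\omega ST + ST_\omega^{-1}ST_\omega)$. First I would expand
\[
P_{k,\D}|(T + ST_\omega + T_\omega ST + ST_\omega^{-1}ST_\omega) = \H|(\1-S)(T + ST_\omega + T_\omega ST + ST_\omega^{-1}ST_\omega)
\]
into eight terms, and repeatedly apply $\H|T = \H|T_\omega = \H|T_\omega^{-1} = \H$ (Proposition \ref{H-properties}(3)) to collapse $\H|T = \H|T_\omega = \H$, $\H|T_\omega ST = \H|ST$, and $\H|T_\omega^{-1}ST_\omega = \H|ST_\omega$. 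After these reductions, everything cancels pairwise except the residue $\H|ST_\omega^{-1}ST_\omega - \H|ST_\omega ST$. To show this vanishes, I would manipulate the defining relation $(T_\omega^{-1}ST_\omega ST)^2 = \1$: left-multiplying successively by $T_\omega$, then $S$, then $T_\omega^{-1}$, and finally $T^{-1}S$ rearranges it to $T_\omega^{-1}ST_\omega ST = T^{-1}ST_\omega^{-1}ST_\omega$, which, using that $T$ and $T_\omega$ commute, is equivalent in $\Gamma_7$ to $ST_\omega^{-1}ST_\omega = TT_\omega^{-1}\cdot ST_\omega ST$. Since $TT_\omega^{-1}$ is the translation by $1-\omega \in \OO_7$, Proposition \ref{H-properties}(3) gives $\H|TT_\omega^{-1} = \H$, and hence $\H|ST_\omega^{-1}ST_\omega = \H|ST_\omega ST$, finishing the verification.

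The principal obstacle is this last rearrangement. In the $d=2$ case, the relation $(T_\omega^{-1}ST_\omega S)^2 = \1$ produced a clean cancellation with no residue left over; the extra factor of $T$ in the $d=7$ relation forces one to route the surviving discrepancy through the commutativity of $T$ and $T_\omega$ so as to land the difference inside the translation subgroup of $\Gamma_7$, where the $\OO_7$-invariance of $\H$ can finally be applied.
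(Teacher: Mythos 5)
Your proposal is correct and follows essentially the same route as the paper: the memberships in $ker(\1+S)$ and $ker(\1+U+U^2)$ are handled exactly as in the $d=1$ case, the $\varepsilon$-invariance comes from Proposition \ref{PPolyforZ[i]prop}(2), and the third kernel condition is verified by expanding the eight terms, collapsing with $\H|T=\H|T_{\omega}=\H|T_{\omega}^{-1}=\H$, and invoking the consequence $ST_{\omega}^{-1}ST_{\omega}=TT_{\omega}^{-1}ST_{\omega}ST$ of the relation $(T_{\omega}^{-1}ST_{\omega}ST)^2=\1$. The only cosmetic difference is that the paper substitutes this group identity before cancelling while you cancel first and apply it to the surviving residue $\H|ST_{\omega}^{-1}ST_{\omega}-\H|ST_{\omega}ST$; the two computations are identical.
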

\begin{proof}
	The proof that $P_{k,\D} \in ker(\1+S)$ and $P_{k,\D} \in ker(\1+U+U^2)$ is same as the one given in the proof of Proposition \ref{W1+}. Let us now calculate $P_{k,\D}|(T+ST_\omega + T_{\omega}ST+ST_{\omega}^{-1}ST_{\omega})$ which is equal to
	\begin{align*}
		=&(\H|(\1-S))|(T+ST_\omega + T_{\omega}ST+ST_{\omega}^{-1}ST_{\omega})\\
		=&\H|(T+ST_\omega + T_{\omega}ST+ST_{\omega}^{-1}ST_{\omega}-ST-T_\omega -ST_{\omega}ST-T_{\omega}^{-1}ST_{\omega})\\
		=&\H|(T+ST_\omega + T_{\omega}ST+TT_{\omega}^{-1}ST_{\omega}ST-ST-T_\omega -ST_{\omega}ST-T_{\omega}^{-1}ST_{\omega}).
	\end{align*}
	Since $\H|T_{\omega}=\H$ and $\H|T_{\omega}^{-1}=\H$ we see that
	\begin{gather*}
		\H|(T+ST_\omega + T_{\omega}ST+TT_{\omega}^{-1}ST_{\omega}ST-ST-T_\omega -ST_{\omega}ST-T_{\omega}^{-1}ST_{\omega})=0
	\end{gather*}
	which shows that $P_{k,\D} \in ker(T+ST_\omega + T_{\omega}ST+ST_{\omega}^{-1}ST_{\omega})$. Finally, by part (2) of Proposition \ref{PPolyforZ[i]prop} we have
	\begin{gather*}
		P_{k,\D}|\e=P_{k,\D}
	\end{gather*}
	which concludes the proof that $P_{k,\D}\in W_{k,k}^{1} $.
\end{proof} 

Unlike the previous cases, we calculated the dimensions of $ W_{k,k}^{1}$ for $1\leq k \leq 27$ due to lack of computing power at our disposal. In fact, it took over a week on my laptop for the computer program that I wrote to compute the dimension of  $W_{k,k}^{1}$ for $k=27$. Regardless, here is the table containing our data for the case of $d=7$.

\begin{table}[h]
	\centering
	\begin{tabular}{c rrrrrrrrrrrrrrrrr}
		
		$k$ & \vline & 1 & 3 & 5 & 7 & 9 & 11 & 13 & 15 & 17 & 19 & 21 & 23 & 25 & 27 \\ 
		\hline
		$\textbf{dim}(W_{k,k}^{1})$ & \vline & 1 & 1 & 2 & 3 & 3  & 4 & 5 & 5 & 6 & 7 & 7 & 8 & 9 & 9 \\
	\end{tabular}
\end{table}
We also observed the same behavior as the previous two cases with respect to the dimensions of $\textbf{dim}(W_{k,k})$ and $\textbf{dim}(W_{k,k}^{-1})$ which is $\textbf{dim}(W_{k,k})=\textbf{dim}(W_{k,k}^{1})$ and $\textbf{dim}(W_{k,k}^{-1})=0 $ for $1\leq k \leq 19$. We again record the following conjecture based on the computational evidence.
\begin{conjec}
	For odd $k\geq 1$ we have that $\displaystyle\textbf{dim}(W_{k,k}^{1})= \left\lfloor \frac{k-1}{3} \right\rfloor+1 .$	
\end{conjec}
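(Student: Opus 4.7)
The conjectured dimension $\lfloor (k-1)/3\rfloor + 1$ is linear in $k$ while $V_{k,k}^{1}$ itself grows quadratically in $k$, so the three defining conditions on $W_{k,k}^{1}$ must together cut out an essentially codimension-quadratic subspace. My plan is to reduce the computation, via the isomorphism of Proposition \ref{W7} and the Eichler--Shimura theorem for Bianchi groups, to a known dimension count for cuspidal Bianchi modular forms on $\Gamma_7$.

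The first step is to pin down the parabolic coboundaries inside $C_{p}(\Gamma_{7},V_{k,k})$. A coboundary $\gamma \mapsto P|\gamma - P$ is parabolic iff $P|T=P$ and $P|T_\omega=P$; writing this as $P(z+t,\bar z+t)=P(z,\bar z)$ for $t\in\R$ and similarly for translation by real multiples of $\omega$, and using that $\{1,\omega\}$ is an $\R$-basis of $\CC$, one concludes that $P$ must be constant in both $z$ and $\bar z$. The image in $W_{k,k}$ under $f\mapsto f(S)$ is therefore one-dimensional, spanned by $|z|^{2k}-1$, and this polynomial lies in the $\e$-invariant part. Thus $\dim W_{k,k}^{1} = \dim H^{1}_{p}(\Gamma_{7}, V_{k,k})^{\e=+1} + 1$.

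Next I would invoke Harder's Eichler--Shimura isomorphism, namely
\[
H^{1}_{p}(\Gamma_{7}, V_{k,k}(\CC)) \;\cong\; S_{k+2,k+2}(\Gamma_{7}) \;\oplus\; \overline{S_{k+2,k+2}(\Gamma_{7})},
\]
where $S_{k+2,k+2}$ is the space of cuspidal Bianchi modular forms of parallel weight $k+2$. Because $\det\e=-1$, the element $\e$ induces an orientation-reversing isometry of $\HH^{3}/\Gamma_{7}$ which intertwines the two summands via complex conjugation, so the $+1$-eigenspace is $\CC$-linearly isomorphic to $S_{k+2,k+2}(\Gamma_{7})$. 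The conjecture then reduces to showing $\dim_{\CC} S_{k+2,k+2}(\Gamma_{7}) = \lfloor (k-1)/3\rfloor$ for odd $k\geq 1$.

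The main obstacle will be this final count. The Selberg trace formula, or equivalently an index-theoretic argument on the Bianchi orbifold, yields such a formula in principle; for $\Gamma_{7}$ the computation is relatively tractable because $K=\QQ(\sqrt{-7})$ has class number $1$ (hence a single cusp) and the elliptic conjugacy classes are visible in the presentation. The technical nuisance is that closed-form parallel-weight dimension formulas for Bianchi cusp forms are not as cleanly tabulated as in the $\textbf{SL}(2,\mathbb{Z})$ case, and the normalization matching $V_{k,k}$ to weight $(k+2,k+2)$ must be verified. A more elementary alternative, avoiding automorphic input, would be to produce by hand a candidate basis of $W_{k,k}^{1}$ of size $\lfloor (k-1)/3\rfloor+1$ using the $\textbf{PSL}(2,\mathbb{Z})$ subgroup $\langle S,U\rangle \subset \Gamma_{7}$ (for which the two conditions $P|(\1+S)=0$ and $P|(\1+U+U^{2})=0$ define a two-variable analogue of period polynomials whose Hilbert series is computable by Molien) and then show that the remaining $d=7$ relation precisely kills the correct number of degrees of freedom; this is the approach I would attempt first if the Bianchi-cusp-form dimension formula proves elusive.
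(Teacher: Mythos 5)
This statement is a conjecture in the paper, not a theorem: the only support offered for $\textbf{dim}(W_{k,k}^{1})=\lfloor (k-1)/3\rfloor+1$ in the $d=7$ case is a SageMath computation for odd $k\le 27$ plus a consistency check against the cohomology tables of \cite{FGT}, and the author explicitly defers a proof to \cite{FKW}. So there is no proof in the paper to compare yours against, and your proposal does not supply one either. Your reduction --- identify the coboundary line spanned by $|z|^{2k}-1$, pass to $H^{1}_{\text{par}}(\Gamma_{7},V_{k,k})$, and invoke Eichler--Shimura --- is essentially the chain of identifications the paper itself records in its concluding remarks, ending at $\textbf{dim}(S_{k}(\Gamma_d))=\textbf{dim}(W_{k-2,k-2})-1$. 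The step you defer, an exact formula for $\dim S_{k+2}(\Gamma_{7})$, is precisely the open problem: the paper states that no closed-form dimension formulas for Bianchi cusp forms are known, and \cite{FGT} proves only that the cuspidal cohomology grows at least linearly in $k$, not an exact count. Your reduction relocates the difficulty rather than resolving it, and the same is true of your fallback plan: a Molien computation handles $\ker(\1+S)\cap\ker(\1+U+U^{2})$, but you give no mechanism for determining the rank of the third relation of Proposition \ref{W7} on that intersection, and that rank is the entire content of the conjecture.

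Two further technical points. Your version of Eichler--Shimura, $H^{1}_{\text{cusp}}\cong S_{k+2}\oplus\overline{S_{k+2}}$, disagrees with the one quoted in the paper, $H^{1}_{\text{cusp}}(\Gamma,V_{n,n})\simeq S_{n+2}(\Gamma)$; for Bianchi groups the cuspidal cohomology of the three-dimensional orbifold is spread over degrees $1$ and $2$, each contributing a single copy of $S_{k+2}$, so your $\e$-eigenspace bookkeeping (splitting the two putative summands by the orientation-reversing involution) would have to be redone. Moreover, to conclude $\textbf{dim}(W_{k,k}^{1})=\textbf{dim}(W_{k,k})$ you would also need $\textbf{dim}(W_{k,k}^{-1})=0$, which for $d=7$ is itself only observed numerically in the paper (Proposition \ref{W7-decomp} gives the splitting, not the vanishing of the $-1$ part). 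Neither issue is fatal to the strategy, but both would need to be repaired before the genuinely hard step is even correctly posed.
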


\hskip 1in

\subsection{ The case $d=11$ }
We use the following presentation of $\Gamma_{11}$ given in \cite{Fine}
\begin{gather*}
	\Gamma_{11}=\langle\, S,T,T_{\omega} \,\big|\,  S^2=(ST)^3=(T_{\omega}^{-1}ST_{\omega}ST)^3=[T,T_\omega]=\1\rangle
\end{gather*}
As usual, the space of parabolic cocycles on $\Gamma_{11}$ is defined to be 
\begin{gather*}
	C_{p}(\Gamma_{11},V_{k,k})=\{f \in C(\Gamma_{11},V_{k,k})\, |\, f(T)=f(T_{\omega})=0 \},
\end{gather*}
which again can be identified it with a subspace of $V_{k,k}$ as follows. 
\begin{prop}\label{W11} The map sending $f\in C_{p}(\Gamma_{11},V_{k,k}) $ to $P_S \in V_{k,k}$ is a $\CC$-vector space isomorphism from $C_{p}(\Gamma_{11},V_{k,k})$ to $W_{k,k}$ where	
	\begin{gather*}
		W_{k,k}:=ker(\1+S)\cap ker(\1+U+U^2)\cap ker(T+ST_\omega +TE+ST_\omega E^{-1}+ T_{\omega}ST+ST_{\omega}^{-1}ST_{\omega}),
	\end{gather*} with $U=TS$ and $E=T_{\omega}^{-1}ST_{\omega}ST$.
\end{prop}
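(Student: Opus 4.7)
The plan is to mirror the argument used for $d=1,2,3,7$ in the earlier propositions. Starting from $f \in C_p(\Gamma_{11}, V_{k,k})$ with $f(T) = f(T_\omega) = 0$, I would apply the cocycle identity to each defining relation of $\Gamma_{11}$ in turn. The relations $S^2 = \1$ and $(ST)^3 = \1$ immediately yield $P_S := f(S) \in \ker(\1 + S)$ and $P_S \in \ker(\1 + U + U^2)$ by the same manipulation as in Proposition \ref{W1}, and the commutator relation $[T, T_\omega] = \1$ contributes nothing new because $f$ vanishes on both $T$ and $T_\omega$.

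The heart of the proof is the new relation $E^3 = \1$ with $E = T_\omega^{-1} S T_\omega S T$. First I would unravel the cocycle on this word, using $f(T_\omega^{-1}) = f(T_\omega) = f(T) = 0$, to obtain $f(E) = P_S|(T + T_\omega S T)$. Substituting into $f(E^3) = f(E)|(\1 + E + E^2) = 0$ gives
\[ P_S \,\big|\, \bigl(T + T_\omega S T + TE + T_\omega S T E + T E^2 + T_\omega S T E^2\bigr) = 0. \]
To convert this into the form claimed by $W_{k,k}$, I would use $E^{-1} = T^{-1} S T_\omega^{-1} S T_\omega$ and the fact that $E^2 = E^{-1}$ in $\Gamma_{11}$, together with $[T, T_\omega] = \1$, to rewrite $T E^2 = S T_\omega^{-1} S T_\omega$ and $T_\omega S T E^2 = S T_\omega$. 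The remaining, less obvious identity $T_\omega S T E = S T_\omega E^{-1}$ is where $d = 11$ genuinely differs from $d = 7$: a direct $2 \times 2$ matrix computation using $\omega^2 = \omega - 3$ shows that both sides equal $\begin{psmallmatrix} \omega & \omega - 2 \\ 2 & \omega + 1 \end{psmallmatrix}$. With these three reductions, $f(E^3) = 0$ becomes exactly the stated six-term kernel condition.

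Injectivity is immediate: if $P_S = 0$ then $f$ vanishes on every generator of $\Gamma_{11}$, hence on all of $\Gamma_{11}$ by the cocycle property. For surjectivity, given $P \in W_{k,k}$, I would set $f(S) := P$ together with $f(T) := f(T_\omega) := 0$ and extend by the cocycle rule; by the way $W_{k,k}$ is defined, the three kernel conditions guarantee that $f$ respects each of $S^2$, $(ST)^3$, and $E^3$, while $[T, T_\omega] = \1$ is automatic. I expect the one real obstacle to be the matrix identity $T_\omega S T E = S T_\omega E^{-1}$, which cannot be derived from free manipulation in $\langle S, T, T_\omega \rangle$ and genuinely requires the specific arithmetic of $\OO_{11}$; everything else parallels Proposition \ref{W7} in a purely formal way.
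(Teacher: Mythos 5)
Your proposal is correct and follows essentially the same route as the paper: unravel the cocycle on $E=T_\omega^{-1}ST_\omega ST$ to get $f(E)=P_S|(T+T_\omega ST)$, feed this into $f(E^3)=0$ with $E^2=E^{-1}$, and simplify the six resulting terms to the stated kernel condition, with injectivity and surjectivity handled exactly as in the $d=1$ case. The only cosmetic difference is that you verify $T_\omega STE=ST_\omega E^{-1}$ by a direct matrix computation in $\OO_{11}$, whereas it also follows formally from the freely derived identity $T_\omega STE^{-1}=ST_\omega$ (which uses only $S^2=\1$) combined with the defining relation $E^3=\1$, which is how the paper implicitly obtains it.
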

\begin{proof}
	Let $f\in C_{p}(\Gamma_{11},V_{k,k})$ such that $f(\gamma)=P_{\gamma}\in V_{k,k}$ and assume that $f\mapsto f(S)=P_S$. Again, the fact that
	\begin{gather*}
		P_S\in ker(\1+S), \hskip 0.1in P_S\in ker(\1+U+U^2)
	\end{gather*}
	follows as before.  For the proof of $P_S\in ker(T+ST_\omega +TE+ST_\omega E^{-1}+ T_{\omega}ST+ST_{\omega}^{-1}ST_{\omega})$, we observe that
	\begin{gather*}
		f(T_{\omega}^{-1}ST_{\omega}ST)=f(E)=P_S|T_{\omega}ST+P_S|T.
	\end{gather*}
	The relation $(T_{\omega}^{-1}ST_{\omega}ST)^3=E^3=\1$ gives
	\begin{align*}
		0&=f((T_{\omega}^{-1}ST_{\omega}ST)^3)\\
		&=f(E)|E^2+f(E)|E+F(E)\\
		&=(P_S|T_{\omega}ST+P_S|T)|E^{-1}+(P_S|T_{\omega}ST+P_S|T)|E+P_S|T_{\omega}ST+P_S|T \quad (E^2=E^{-1}),
	\end{align*}
	and since 
	\begin{gather*}
		T_{\omega}STE^{-1}=ST_{\omega}, \quad TE^{-1}=ST_{\omega}^{-1}ST_{\omega} 
	\end{gather*}
	the result follows.	Finally, we do not get any new equations from the remaining relations and the rest of the proof proceeds as in the proof of Proposition \ref{W1}.
\end{proof}

$W_{k,k}$ is stable under the action of $\e=\begin{psmallmatrix}
-1&0\\ 0&1 
\end{psmallmatrix}$ and splits as in the case of $d=2,7$. 
\begin{prop}\label{W11-decomp}
	$W_{k,k}|\e =W_{k,k}$ and hence 
	\begin{gather*}
		W_{k,k}=W_{k,k}^{1}\oplus W_{k,k}^{-1}
	\end{gather*} where $W_{k,k}^{\pm 1}=W_{k,k}\cap V_{k,k}^{\pm 1} $. 
\end{prop}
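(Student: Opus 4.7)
The plan follows the exact same template as Propositions \ref{W2-decomp} and \ref{W7-decomp}. First record the commutation relations in $\textbf{PGL}(2,\OO_{11})$: $\e S = S\e$, $\e T = T^{-1}\e$, and $\e T_\omega = T_\omega^{-1}\e$. From the first two one obtains $\e U = SU^2 S \e$. For any $P \in W_{k,k}$, these identities together with $P|S = -P$ are already enough to conclude $P|\e \in \ker(\1+S) \cap \ker(\1+U+U^2)$ by repeating the arguments of Proposition \ref{W2-decomp} or \ref{W7-decomp} verbatim.

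The only substantive new work is stability under $\e$ of the six-term kernel. Set $R := T + ST_\omega + TE + ST_\omega E^{-1} + T_\omega ST + ST_\omega^{-1}ST_\omega$, so we must show $(P|\e)|R = 0$. Pushing $\e$ past each term via the commutation relations above, together with the derived identity $\e E \e^{-1} = T_\omega S T_\omega^{-1} S T^{-1}$, a direct computation yields $(P|\e)|R = (P|R')|\e$, where $R'$ is the six-term element of the group ring obtained from $R$ by swapping $T \leftrightarrow T^{-1}$ and $T_\omega \leftrightarrow T_\omega^{-1}$ throughout. It therefore suffices to prove $P|R' = 0$.

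To derive this from the hypothesis $P|R = 0$ I would employ the same trick used in the proof of Proposition \ref{W7-decomp}: right-multiply the identity $P|R = 0$ by $T_\omega^{-1}T^{-1}$ to get $P|Y = 0$ for a certain six-term $Y$. A term-by-term simplification using $T T_\omega = T_\omega T$ and $S^2 = \1$ then establishes the group-ring identity $SY = R'$, and combining with $P|S = -P$ gives $P|R' = P|(SY) = -P|Y = 0$, as required. Once $W_{k,k}|\e \subseteq W_{k,k}$ is known, the direct sum decomposition follows immediately from the fact that $\e$ has order two on $V_{k,k}$, exactly as in the previous subsections.

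The main obstacle here is purely bookkeeping: simplifying the images of the two $E$-containing terms $TE$ and $ST_\omega E^{-1}$ under right-multiplication by $T_\omega^{-1}T^{-1}$ requires carefully unfolding $E = T_\omega^{-1} S T_\omega S T$ and $E^{-1} = T^{-1} S T_\omega^{-1} S T_\omega$ and commuting powers of $T$ past powers of $T_\omega$. The remaining four terms of $R$ are identical (or nearly so) to those appearing in the $d=7$ case and reduce without effort, so the whole argument is a mild extension of Proposition \ref{W7-decomp}.
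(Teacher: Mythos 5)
Your proposal is correct and follows the paper's proof essentially step for step: the paper uses the same commutation relations $\e S=S\e$, $\e T=T^{-1}\e$, $\e T_\omega=T_\omega^{-1}\e$ for the first two kernels, and for the six-term kernel it likewise conjugates by $\e$, right-multiplies $P|R=0$ by $T_\omega^{-1}T^{-1}$, and uses $P|S=-P$ together with $TT_\omega=T_\omega T$ and $S^2=\1$ to identify the result with $P|R'$. The group-ring identity $SY=R'$ you assert does check out term by term, so the argument coincides with the paper's.
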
 

\begin{proof}
	Let $P\in W_{k,k}$. We just need to show that $P|\e$ belongs to $ker(T+ST_\omega +TE+ST_\omega E^{-1}+ T_{\omega}ST+ST_{\omega}^{-1}ST_{\omega})$ since the proof of the fact that $P|\e$ belongs to $ker(\1+S)$ and  $ker(\1+U+U^2)$ is same as before because the identities 
	\begin{gather*}
		\e S= S\e, \quad \e U=SU^2S\e
	\end{gather*}
	still hold. 
	
	So, using
	\begin{gather*}
		\e T=T^{-1}\e,\quad \e T_\omega=T_\omega^{-1}\e,
	\end{gather*}	
	along with $E=T_{\omega}^{-1}ST_{\omega}ST$ and $E^{-1}=T^{-1}ST_{\omega}^{-1}ST_{\omega}$, to calculate $(P|\e)|(T+ST_\omega +TE+ST_\omega E^{-1}+ T_{\omega}ST+ST_{\omega}^{-1}ST_{\omega})$ gives  
	\begin{align*}
		&=(P|\e)|(T+ST_\omega +TT_{\omega}^{-1}ST_{\omega}ST+ST_\omega T^{-1}ST_{\omega}^{-1}ST_{\omega}+ T_{\omega}ST+ST_{\omega}^{-1}ST_{\omega})\\
		&=(P|(T^{-1}+ST_{\omega}^{-1}+T^{-1}T_{\omega}ST_{\omega}^{-1}ST^{-1}+ST_{\omega}^{-1}TST_{\omega}ST_{\omega}^{-1}+T_{\omega}^{-1}ST^{-1}+ST_{\omega}ST_{\omega}^{-1}))|\e
	\end{align*}
	Now, since 
	\begin{gather*}
		S^2=\1,\quad T_\omega^{-1}T^{-1}=T^{-1}T_\omega^{-1},\quad \text{and}\quad P|S=-P,
	\end{gather*}
	we have
	\begin{align*}
		0=&(-P)|(T+ST_\omega +TT_{\omega}^{-1}ST_{\omega}ST+ST_\omega T^{-1}ST_{\omega}^{-1}ST_{\omega}+ T_{\omega}ST+ST_{\omega}^{-1}ST_{\omega})\\
		=&(-P)|(T+ST_\omega +TT_{\omega}^{-1}ST_{\omega}ST+ST_\omega T^{-1}ST_{\omega}^{-1}ST_{\omega}+ T_{\omega}ST+ST_{\omega}^{-1}ST_{\omega})|T_{\omega}^{-1}T^{-1}\\
		=&(-P)|(T_{\omega}^{-1}+ST^{-1}+TT_{\omega}^{-1}ST_{\omega}ST_{\omega}^{-1}+ST_\omega T^{-1}ST_{\omega}^{-1}ST^{-1}+T_{\omega}ST_{\omega}^{-1}+ST_{\omega}^{-1}ST^{-1})\\
		=&(P|S)|(T_{\omega}^{-1}+ST^{-1}+TT_{\omega}^{-1}ST_{\omega}ST_{\omega}^{-1}+ST_\omega T^{-1}ST_{\omega}^{-1}ST^{-1}+T_{\omega}ST_{\omega}^{-1}+ST_{\omega}^{-1}ST^{-1})\\
		=&P|(ST_{\omega}^{-1}+T^{-1}+STT_{\omega}^{-1}ST_{\omega}ST_{\omega}^{-1}+T_\omega T^{-1}ST_{\omega}^{-1}ST^{-1}+ST_{\omega}ST_{\omega}^{-1}+T_{\omega}^{-1}ST^{-1}).
	\end{align*}
	But,
	\begin{align*}
		STT_{\omega}^{-1}ST_{\omega}ST_{\omega}^{-1}&=ST_{\omega}^{-1}TST_{\omega}ST_{\omega}^{-1}\\
		T_\omega T^{-1}ST_{\omega}^{-1}ST^{-1}&=T^{-1}T_{\omega}ST_{\omega}^{-1}ST^{-1}
	\end{align*}
	which shows that
	\begin{gather*}
		P|(T^{-1}+ST_{\omega}^{-1}+T^{-1}T_{\omega}ST_{\omega}^{-1}ST^{-1}+ST_{\omega}^{-1}TST_{\omega}ST_{\omega}^{-1}+T_{\omega}^{-1}ST^{-1}+ST_{\omega}ST_{\omega}^{-1})=0.
	\end{gather*} It follows that $P|\e \in ker(T+ST_\omega +TE+ST_\omega E^{-1}+ T_{\omega}ST+ST_{\omega}^{-1}ST_{\omega}).$
\end{proof}

\begin{prop}
	The polynomial $P_{k,\D}$ belongs to the subspace $W_{k,k}^1$ and therefore the map $f:\Gamma_{11} \to V_{k,k}$ defined by 
	\begin{gather*}
		\gamma \mapsto (\H|(\1-S))|\gamma
	\end{gather*} is a cocycle belonging to $C_p(\Gamma_{11},V_{k,k})$.
\end{prop}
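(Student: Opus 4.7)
The plan is to follow the template of the previous four cases exactly, so the only genuinely new work is verifying the third kernel condition. I would first dispatch $P_{k,\D}\in\ker(\1+S)$ and $P_{k,\D}\in\ker(\1+U+U^{2})$ by the same two-line arguments as in Proposition \ref{W1+}: the first from $S^{2}=\1$ applied to $P_{k,\D}=\H|(\1-S)$, and the second from $U^{3}=\1$ combined with $\H|T=\H$ (so that $P_{k,\D}=\H|(\1-U)$). The final statement $P_{k,\D}\in W_{k,k}^{1}$ is immediate from Proposition \ref{PPolyforZ[i]prop}(2), since the unit $-1\in\OO_{11}$ gives $P_{k,\D}|\e=P_{k,\D}$.

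The heart of the proof is the vanishing $P_{k,\D}|R=0$, where
\[
R := T + ST_\omega + TE + ST_\omega E^{-1} + T_\omega ST + ST_\omega^{-1}ST_\omega
\]
and $E=T_\omega^{-1}ST_\omega ST$. My plan is to factor $R$ in $\mathbb{Z}[\Gamma_{11}]$ as
\[
R \;=\; (T + T_\omega ST)(\1 + E + E^{2}),
\]
which is essentially just running the derivation of $R$ inside Proposition \ref{W11} backwards. To verify this identity I would expand the product and use $E^{3}=\1$ (so $E^{2}=E^{-1}=T^{-1}ST_\omega^{-1}ST_\omega$) to reduce $TE^{2}$ to $ST_\omega^{-1}ST_\omega$, $T_\omega STE^{2}$ to $ST_\omega$, and, via $T_\omega STE^{-1}=ST_\omega$, $T_\omega STE$ to $ST_\omega E^{-1}$, recovering the six summands of $R$.

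Given the factorization, the rest is a short telescoping. Using $\H|T=\H|T_\omega=\H$, I would first reduce
\[
\H|(\1-S)(T+T_\omega ST) \;=\; \H + \H|ST - \H|ST - \H|ST_\omega ST \;=\; \H|(\1-E),
\]
the last equality because $\H|T_\omega^{-1}=\H$ gives $\H|ST_\omega ST = \H|E$. Then
\[
P_{k,\D}|R \;=\; \H|(\1-E)(\1+E+E^{2}) \;=\; \H|(\1-E^{3}) \;=\; 0,
\]
by the defining relation $E^{3}=\1$ of $\Gamma_{11}$. The main obstacle is really just spotting the factorization of $R$ through $\1+E+E^{2}$; this is what distinguishes $d=11$, where $E$ has order three, from the parallel cases $d=2,7$, where $E$ has order two and one only needed the factor $\1+E$ with an even shorter telescoping. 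Once the factorization is in hand, the rest of the argument writes itself.
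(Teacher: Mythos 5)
Your proposal is correct, and every identity you rely on checks out: the factorization
\begin{gather*}
T + ST_\omega + TE + ST_\omega E^{-1} + T_{\omega}ST + ST_{\omega}^{-1}ST_{\omega} \;=\; (T+T_{\omega}ST)(\1+E+E^{2})
\end{gather*}
holds in $\mathbb{Z}[\Gamma_{11}]$ because $TE^{2}=TE^{-1}=ST_{\omega}^{-1}ST_{\omega}$, $T_{\omega}STE^{2}=T_{\omega}STE^{-1}=ST_{\omega}$, and $T_{\omega}STE=ST_{\omega}E^{2}=ST_{\omega}E^{-1}$; and the reduction $\H|(\1-S)(T+T_{\omega}ST)=\H|(\1-E)$ follows exactly as you say from $\H|T=\H|T_{\omega}=\H|T_{\omega}^{-1}=\H$. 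The paper takes a different route for the third kernel: it expands $(\H|(\1-S))|R$ term by term, strips leading powers of $T$ and $T_{\omega}$ using the translation invariance of $\H$, cancels everything except $\H|(ST_\omega E^{-1}-STE)$, and then kills that remainder with the single word identity $ST_\omega E^{-1}=T_{\omega}STE$. Your factorization through $\1+E+E^{2}$ is the same underlying computation repackaged, but it is arguably the more transparent organization: it makes the role of the relation $E^{3}=\1$ explicit and puts the $d=11$ case on exactly the same footing as the telescoping arguments $\H|(\1-U)(\1+U+U^2)=0$ and $\H|(\1-E)(\1+E+E^2)=0$ used in Proposition \ref{W1+}, whereas the paper's direct cancellation obscures why the six-term kernel element must annihilate $P_{k,\D}$. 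The remaining parts of your argument (the first two kernels via $S^2=\1$ and $U^3=\1$, and $P_{k,\D}\in W_{k,k}^{1}$ via part (2) of Proposition \ref{PPolyforZ[i]prop}) coincide with the paper's.
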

\begin{proof}
	The proof that $P_{k,\D} \in ker(\1+S)$ and $P_{k,\D} \in ker(\1+U+U^2)$ is same as before. To show that $P_{k,\D} \in ker(T+ST_\omega +TE+ST_\omega E^{-1}+ T_{\omega}ST+ST_{\omega}^{-1}ST_{\omega})$ we calculate
	\begin{gather*}
		(\H|(\1-S))|(T+ST_\omega +TE+ST_\omega E^{-1}+ T_{\omega}ST+ST_{\omega}^{-1}ST_{\omega}).
	\end{gather*}
	Again, using the identities $\H|T_{\omega}=\H$ and $\H|T_{\omega}^{-1}=\H$ together with $E=T_{\omega}^{-1}ST_{\omega}ST$ and $E^{-1}=T^{-1}ST_{\omega}^{-1}ST_{\omega}$ we end up with 
	\begin{gather*}
		(\H|(\1-S))|(T+ST_\omega +TE+ST_\omega E^{-1}+ T_{\omega}ST+ST_{\omega}^{-1}ST_{\omega})=\H|(ST_\omega E^{-1}-STE).
	\end{gather*}	
	But
	\begin{gather*}
		ST_\omega E^{-1}=T_{\omega}STE,
	\end{gather*}
	and so 	 
	\begin{gather*}
		\H|(ST_\omega E^{-1}-STE)=\H|(T_{\omega}STE-STE)=0
	\end{gather*}		
	which shows that $P_{k,\D} \in ker(T+ST_\omega +TE+ST_\omega E^{-1}+ T_{\omega}ST+ST_{\omega}^{-1}ST_{\omega})$. Finally, by part (2) of Proposition \ref{PPolyforZ[i]prop} we have
	\begin{gather*}
		P_{k,\D}|\e=P_{k,\D}
	\end{gather*}
	which concludes the proof that $P_{k,\D}\in W_{k,k}^{1} $.
\end{proof} 
Again, the lack of computational resources and the complicated description of the third kernel limited our dimension calculations to the range $1\leq k \leq 21$. In this range the data again showed that  $\textbf{dim}(W_{k,k})=\textbf{dim}(W_{k,k}^{1})$ and $\textbf{dim}(W_{k,k}^{-1})=0$. Here is the table containing our data for $d=11$ case.
\begin{table}[h]
	\centering
	\begin{tabular}{c rrrrrrrrrrrrrrrrr}
		$k$ & \vline & 1 & 3 & 5 & 7 & 9 & 11 & 13 & 15 & 17 & 19 & 21\\ 
		\hline
		$\textbf{dim}(W_{k,k}^{1})$ & \vline & 1 & 2 & 3 & 4 & 5  & 6 & 7 & 8 & 9 & 10 & 11\\
	\end{tabular}
\end{table}\\
Based on this numerical data we are led to the following conjecture.
\begin{conjec}
	For odd $k\geq 1$ we have that $\displaystyle\textbf{dim}(W_{k,k}^{1})= \frac{k+1}{2}.$	
\end{conjec}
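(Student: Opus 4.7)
The plan is to attack the conjecture along two parallel tracks and then try to match them up. The ambient object, $V_{k,k}^{1}$, has an easily computed dimension: among the $(k+1)^{2}$ monomials $z^{i}\bar z^{j}$ with $0\le i,j\le k$, exactly those with $i+j$ even survive the $\varepsilon$-invariance condition, giving $\dim V_{k,k}^{1}=(k+1)^{2}/2$ when $k$ is odd. The task is then to show that the three kernel conditions defining $W_{k,k}$ (intersected with $V_{k,k}^{1}$) cut this down to exactly $(k+1)/2$ dimensions. The first kernel condition $P|(\mathbf{1}+S)=0$ forces $P(z,\bar z)=-|z|^{2k}P(1/z,1/\bar z)$, which in coordinates is the involution $c_{ij}\mapsto -c_{k-j,k-i}$; I would impose this first, together with $\varepsilon$-invariance, to cut $V_{k,k}^{1}$ roughly in half.

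Next, I would impose $P|(\mathbf{1}+U+U^{2})=0$, an order-3 relation which is identical to the one appearing for all five Euclidean Bianchi groups. Writing $U=TS$ and using the same calculation as Cohen–Manin in the classical case would, in principle, reduce the problem to a subspace parametrized by polynomials supported on a fundamental domain for $\langle S,U\rangle$. The decisive step is the third kernel condition coming from the six-term relation $T+ST_{\omega}+TE+ST_{\omega}E^{-1}+T_{\omega}ST+ST_{\omega}^{-1}ST_{\omega}$. I would analyze this term by term, rewriting each group-ring element as a change of variables in $V_{k,k}^{1}$, and then show (by an inductive argument on $k$) that passing from $k$ to $k+2$ adds exactly one dimension's worth of solutions modulo the already-imposed constraints, anchored by the base case $k=1$.

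The second, and likely cleaner, approach is via the generalized Eichler–Shimura isomorphism mentioned in the paper's closing discussion. The space $C_{p}(\Gamma_{11},V_{k,k})$ modulo coboundaries is the parabolic cohomology $H^{1}_{p}(\Gamma_{11},V_{k,k})$, which by Eichler–Shimura decomposes as a sum of pieces coming from Bianchi cusp forms of weight $(k+2,k+2)$ on $\Gamma_{11}$ together with an Eisenstein contribution. The element $\varepsilon$ normalizes $\Gamma_{11}$ and acts as an involution on both sides, so restriction to the $+1$ eigenspace $W_{k,k}^{1}$ should correspond to the $\varepsilon$-symmetric part of the cusp form space plus a (small) Eisenstein piece. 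Invoking the dimension formulas of Finis–Grunewald–Tirao or Şengün for these Bianchi spaces at small level should then yield the arithmetic progression $1,2,3,4,\dots$ predicted by the conjecture, once one has carefully accounted for coboundaries (noting that $\dim B(\Gamma_{11},V_{k,k})$ is simply $\dim V_{k,k}-\dim V_{k,k}^{\Gamma_{11}}$).

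The main obstacle in both approaches is the six-term relation peculiar to $d=11$: unlike the $d=1,3$ cases, there is no short chain of substitutions that collapses it, and unlike the $d=2,7$ cases it has one more summand, so even verifying directly that $P_{k,\Delta}$ lies in its kernel already required the identity $ST_{\omega}E^{-1}=T_{\omega}STE$. I expect that the cleanest route is the cohomological one, reducing the computation to Bianchi cusp form dimensions and thereby sidestepping the direct analysis of the third kernel; the linear-algebraic route would likely require either an explicit generating set for $W_{k,k}^{1}$ built recursively from $P_{1,\Delta},P_{3,\Delta},\ldots$ or a clever change of basis that diagonalizes the six-term operator simultaneously with $S$ and $U$.
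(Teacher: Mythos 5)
First, a point of order: the statement you are trying to prove is stated in the paper as a \emph{conjecture}, not a theorem. The paper offers no proof of it; its only support is a SageMath computation of $\dim(W_{k,k}^{1})$ for odd $k$ in the range $1\leq k\leq 21$ (further limited, as the author notes, by the complexity of the six-term kernel for $d=11$), together with a consistency check against the cuspidal cohomology tables of Finis--Gr\"unewald--Tirao. The author explicitly defers any proof to the work in preparation \cite{FKW}. So there is no ``paper proof'' to match your attempt against, and your proposal should be judged on whether it closes the gap on its own. It does not: it is a research plan in which neither track is actually executed.

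Concretely, the first track stalls exactly where you say it does. The computation of $\dim V_{k,k}^{1}=(k+1)^{2}/2$ and the description of $\ker(\1+S)$ as an involution on coefficients are fine, but the entire content of the conjecture is concentrated in the claim that the order-3 relation and the six-term relation together cut this down to $(k+1)/2$, and your ``inductive argument on $k$'' showing that each step $k\mapsto k+2$ adds exactly one dimension is asserted, not constructed; no mechanism is given for controlling how the six-term operator interacts with the new monomials in degree $k+2$. The second track is worse than incomplete --- it is circular. The Eichler--Shimura route reduces $\dim W_{k,k}^{1}$ to $\dim S_{k+2}(\Gamma_{11})$ plus boundary terms, but the paper itself points out that no explicit dimension formulas for Bianchi cusp form spaces $S_{k}(\Gamma_d)$ are known, and that the conjecture, if true, would \emph{produce} such formulas via $\dim(S_{k}(\Gamma_d))=\dim(W_{k-2,k-2})-1$. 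Invoking Finis--Gr\"unewald--Tirao or \c{S}eng\"un here only gives you finitely many computed values, which is no more than the numerical evidence the paper already has. Either track, carried to completion, would be a genuine contribution; as written, neither constitutes a proof.
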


\hskip 1in

\subsection{Some concluding remarks and future work}
We close this section by elaborating on the importance of the conjectural formulas we have stated and how they relate to Bianchi cusp forms which are the analogs of cusp forms for imaginary quadratic fields. We also list some questions (some of which are being considered in \cite{FKW}) that may have occurred to the reader but are left unanswered. 

We will not define most of the objects we refer to in this subsection and be intentionally vague about them due to the fact giving precise definitions require a lot of sophisticated mathematical machinery. We also do not want to stray too much from the main theme of this paper which is to study the function $\H$. However, some good references that discuss some of the things we refer to are \cite{Ko-Za,Mo,Sen1}. 

We begin with the generalized Eichler-Shimura Isomorphism which states that  
\begin{gather*}
	H^{1}_{\text{cusp}}(\Gamma,V_{n,n}) \simeq S_{n+2}(\Gamma)
\end{gather*}
where $\Gamma$ is a Bianchi group, $S_n(\Gamma)$ is the vector space of Bianchi cusp forms of weight $n$ on $\Gamma$, and $H^{1}_{\text{cusp}}$ refers to the cuspidal cohomology of $\Gamma$. By translating the sheaf cohomology into group cohomology one can show that
\begin{gather*}
	H^{1}_{\text{cusp}}(\Gamma,V_{n,n}) \simeq H^{1}_{\text{par}}(\Gamma,V_{n,n})
\end{gather*}
where $H^{1}_{\text{par}}$ is the parabolic cohomology defined as the quotient of parabolic cocycles on $\Gamma$ by parabolic coboundaries on $\Gamma$. We already saw that when $\Gamma=\Gamma_d$ the space of parabolic cocycles can be identified with $W_{k,k}$. Also, one can easily show that the space of parabolic coboundaries can be identified with constant polynomials implying that it is of dimension 1. Consequently, 
\begin{gather*}
	\textbf{dim}(S_{k}(\Gamma_d))=\textbf{dim}(W_{k-2,k-2})-1.
\end{gather*} 
So, if the conjectural formulas we have stated are true then, at least in the case when $k$ is odd, one immediately obtains explicit formulas, depending only on $k$, for the dimension of $S_{k}(\Gamma_d)$. As far as we can tell, no explicit dimension formulas for $S_{k}(\Gamma)$ are known. However, Finis et al.\cite{FGT} have carried out extensive computations on the dimension of various cohomology groups associated to Bianchi groups which provides a way of checking the validity of our data. We are encouraged by the fact that our data does agree with theirs in the overlapping cases. Indeed, we checked our data for the  
$\textbf{dim}H^{1}_{\text{cusp}}(\Gamma_d,V_{k,k})$ against the data\footnote{The table gives the dimension of $\textbf{dim}H^{1}(\Gamma_d,V_{k,k})$, but the $\textbf{dim}H^{1}_{\text{cusp}}(\Gamma_d,V_{k,k})$ can be easily obtained using the formula $$\textbf{dim}H^{1}(\Gamma_d,V_{k,k})-\textbf{dim}H^{1}_{\text{cusp}}(\Gamma_d,V_{k,k})=\nu_{K,k}h_K $$ which is given on the same page.} provided in Table 1 on pg. 53 in \cite{FGT}, and both sets of data are in agreement for $k$ odd and in the range $1\leq k\leq15$.

Finally, we discuss some future work and questions:
\begin{itemize}
	\item[1)] The foremost question is obviously whether the formulas we have stated are true or not. Currently, we are working to answer this question in \cite{FKW}. We are also trying to find and prove dimension formulas when $k$ is even by looking at the experimental data. Another aspect of our work in \cite{FKW} is to define the analog of the \textit{period map} from the classical case of $\textbf{PSL}(2,\mathbb{Z})$ for the Bianchi cusp forms.	
	\item[2)] There is an action of Hecke operators on $H^{1}_{\text{cusp}}(\Gamma,V_{n,n})$ and the isomorphism 
	\begin{gather*}
		H^{1}_{\text{cusp}}(\Gamma,V_{n,n}) \simeq S_{n+2}(\Gamma)
	\end{gather*} is a Hecke module isomorphism. It will be interesting to transfer the action of Hecke operators to the space $W_{k,k}$ and explore its consequences. Indeed, Zagier \cite{Za3} has done this for the space of period polynomials which is defined similarly as $W_{k,k}$ and has obtained interesting results for the traces of Hecke operators. We plan to carry this out in \cite{FKW} or in a subsequent paper.
	
	\item[3)] Reader may be wondering if there are any other values of $k$, other than the ones we have found, for which $\H$ is constant. The anonymous referee also asked about this. The answer is most likely no since it is shown in \cite{FGT} that the dimension of $H^{1}_{\text{cusp}}(\Gamma,V_{n,n})$ grows at least linearly in $n$ which means that the dimension of the space of parabolic cocycles containing $P_{k,\D}$ grows at least linearly in $n$ as well, and $\H$ is constant precisely when the subspace containing $P_{k,\D}$ in the direct sum decomposition of the space of parabolic cocycles is one dimensional. This, along with computational evidence we have gathered, strongly suggest that there are no other $k$ values, other than the ones we have found, for which $\H$ is constant.
	
	\item[4)] Finally, the reader may also be wondering whether the kernels of the linear maps defining $W_{k,k}$ can be combined into a single or fewer kernels. We have not pursued this, but it is likely to be true. Indeed, this is true for the space of parabolic cocycles on $\textbf{PSL}(2,\mathbb{Z})$ defined as the intersection of $ker(\1+S)$ and $ker(\1+U+U^2)$ where $S$ and $U$ are the generators of $\textbf{PSL}(2,\mathbb{Z})$. It can be easily checked, which is left as an  exercise in \cite{Za2}, that
	\begin{gather*}
		f\in ker(\1+S) \cap ker(\1+U+U^2) \iff f \in ker(\1-US-U^2S)
	\end{gather*}
	where $f$ is a parabolic cocycle on $\textbf{PSL}(2,\mathbb{Z})$.
\end{itemize}

\end{document}